\newtheorem{theo}{Theorem}[section]
\newtheorem{lem}[theo]{Lemma}
\newtheorem{prop}[theo]{Proposition}
\newtheorem{cor}[theo]{Corollary}
\newtheorem{lemma}[theo]{Lemma}
 \theoremstyle{definition}
\newtheorem{definition}[theo]{Definition}
\newtheorem{example}[theo]{Example}
 \theoremstyle{remark}
 \numberwithin{equation}{section}
\newtheorem{remark}[theo]{Remark}
\newcommand{\betheo}{\begin{theo}$\!\!\!${\bf } }
\newcommand{\entheo}{\end{theo}}
\newcommand{\becor}{\begin{cor}$\!\!\!$  }
\newcommand{\encor}{\end{cor}}
\newcommand{\belem}{\begin{lem}$\!\!\!${\bf .} }
\newcommand{\enlem}{\end{lem}}
\newcommand{\beprop}{\begin{prop}$\!\!\!${\bf .} }
\newcommand{\enprop}{\end{prop}}
\newcommand{\bedefi}{\begin{definition}$\!\!\!$ \rm }
\newcommand{\findefi}{ \end{definition}}
\newcommand{\beex}{\begin{example}$\!\!\!$ \rm }
\newcommand{\enex}{ \end{example}}
\newcommand{\berem}{\begin{remark}$\!\!\!$ \rm }
\newcommand{\enrem}{ \end{remark}}
\numberwithin{equation}{section}
\newcommand{\be}{\begin{equation}}
\newcommand{\en}{\end{equation}}
\newcommand{\bea}{\begin{eqnarray}}
\newcommand{\ena}{\end{eqnarray}}
\newcommand{\beano}{\begin{eqnarray*}}
\newcommand{\enano}{\end{eqnarray*}}
\newcommand{\bee}{\begin{enumerate}}
\newcommand{\ene}{\end{enumerate}}
\newcommand{\bei}{\begin{itemize}}
\newcommand{\eni}{\end{itemize}}
\newcommand{\betab}{\begin{tabular}}
\newcommand{\entab}{\end{tabular}}
\newcommand{\bd}{\begin{displaymath}}
\newcommand{\ad}{^{\mbox{\scriptsize $\dag$}}}
\newcommand{\das}{^{\dag {\rm\textstyle *}}}
\newcommand{\up}{\raisebox{0.7mm}{$\upharpoonright $}}%
\newcommand{\cci}{\raisebox{1.3pt}{$\,\scriptscriptstyle\bullet\,$}}
\def\A{{\mathfrak A}}
\def\B{{\mathcal B}}
\def\D{{\mathcal D}}
\def\E{{\mathcal E}}
\def\H{{\mathcal H}}
\def\I{{\mathcal I}}
\def\L{{\mathcal L}}
\def\M{{\mathcal M}}
\def\P{{\mathcal P}}
\def\R{{\mathcal R}}
\def\J{\relax\ifmmode {\mathcal J}\else${\mathcal J}$\fi}
\def\x{\relax\ifmmode {\mbox{*}}\else*\fi}
\def\AG{{\mathfrak A}}
\def\MM{{\mathfrak M}}
\def\NG{{\mathfrak N}}
\newcommand{\mc}{\mathcal}
\newcommand{\mb}{\mathbb}
\newcommand{\Ao}{{\AG}_o}
\newcommand{\norm}[1]{\| #1 \|}
\newcommand{\mult}{{\scriptstyle \Box}}
\newcommand{\wmult}{{\scriptscriptstyle \Box}}
\newcommand{\qsmult}{\cci}
\newcommand{\ha}{^{\rm\textstyle *}}
\newcommand{\noi}{\noindent}
\newcommand{\pa}{partial \mbox{*-algebra}}
\newcommand{\po}{partial O\mbox{*-algebra}}
\newcommand{\tpa}{topological partial \mbox{*-algebra}}
\newcommand{\LD}{{\L}\ad(\D)}
\newcommand{\LDH}{{\L}\ad(\D,\H)}
\newcommand{\LDHO}[2]{{\L}\ad(#1,#2)}
\newcommand{\LBDH}{{\L}_b\ad(\D,\H)}
\newcommand{\w}{\rm w}
\newcommand{\Alb}{\A_{\sf{lb}}}
\newcommand{\nlb}[1]{\left\|#1\right\|_{\sf{lb}}}
\newcommand{\ssp}{_{\scriptscriptstyle \oplus}}
\def\dag{\dagger}
\newcommand{\vp}{\varphi}
\newcommand{\ip}[2]{\left\langle {#1}\left| {#2}\right.\right\rangle}
\def\OL{\relax\ifmmode {\sf L}\else{\textsf L}\fi}
\def\OR{\relax\ifmmode {\sf R}\else{\textsf R}\fi}
\newcommand{\ZOL}{\OL^\circ}
\newcommand{\dd}{\D}
\newcommand{\hh}{\H}
\begin{document}
\baselineskip=17pt

\title[Bounded elements in topological partial *-algebras]{Bounded elements in certain topological partial *-algebras}

\author[J.-P. Antoine]{Jean-Pierre Antoine}
\address{%
Institut de Recherche en Math\'ematique et Physique \\
Universit\'e Catholique de Louvain\\
B-1348   Louvain-la-Neuve\\
Belgium}
\email{jean-pierre.antoine@uclouvain.be}

\author[C. Trapani]{Camillo Trapani}
\address{%
Dipartimento di Matematica e Informatica \\
Universit\`a di Palermo\\
I-90123 Palermo\\
Italy}
\email{trapani@unipa.it}

\author[F. Tschinke]{Francesco Tschinke}
\address{%
Dipartimento di Metodi e Modelli Matematici \\
 Facolt\`a d'Ingegneria - Universit\`a di Palermo\\
I-90128 Palermo\\
Italy}
\email{ftschinke@unipa.it}

\begin{abstract} \noindent
We continue our study of topological partial *algebras,  focusing our attention to the interplay between the various
partial multiplications. The special case of partial *-algebras of operators is examined first, in particular the link between
the strong and the weak multiplications, on one hand, and  invariant  positive sesquilinear (ips) forms, on the other.
 Then the analysis is extended to abstract \tpa s, emphasizing the crucial role played by appropriate bounded elements, called
$\M$-bounded. Finally, some remarks are made concerning representations in terms of the so-called partial GC*-algebras of operators.
\end{abstract}

\subjclass[2010]{Primary 47L60; Secondary 46H15}

\keywords{bounded elements, partial *-algebras}

\maketitle

\section{Introduction}

Studies on partial *-algebras have provided so far a considerable amount of information about their representation
theory and their structure. In particular, many results have been obtained for \emph{concrete} partial *algebras,
i.e., partial *-algebras of closable operators (the so-called partial O*-algebras). A full analysis of these aspects has
been developed by Inoue and two of us and it can be found in the monograph \cite{ait_book}, where earlier articles are quoted.

In a recent paper \cite{antratsc}, we have started the analysis of spectral properties of partial *-algebras and, in particular, partial O*-algebras.
We continue this study in the present work, focusing rather on the interplay between the different partial multiplications at hand.
Indeed, the main feature of partial O*-algebras is that they carry two natural
multiplications, the weak one and the strong one. Even tough they are, in general, \pa s only with respect to the first one,
the interplay of the two multiplications allows a rather natural definition of inverse of an element and thus a good starting point for the spectral theory.
These two ingredients (the possibility of defining a \emph{strong} multiplication and the existence of bounded elements)
are then introduced in the abstract context leading to the notion of \emph{topologically regular} \pa. This, in turn, suggests to characterize a special class of \tpa s,
called \emph{partial GC*-algebras}, both in an abstract version and in an operator version, i.e., a special class of \po s.

In the case of a partial O*-algebra $\A$, the best situation for the spectral theory occurs
when $\A$ contains sufficiently many \emph{bounded elements}, i.e., bounded operators. The same property
will show up here. We will characterize the appropriate notion of bounded elements, namely, the so-called $\M$-bounded elements.
The very name shows that the construction derives from a (sufficiently large) family  $\M$ of invariant  positive sesquilinear (ips) forms.
As a matter of fact, the strong partial multiplication is also derived from this family, and so are the associated spectral results.
For instance, an element $x\in\A$ has a finite spectral radius if and only if it is $\M$-bounded.
As a result, the whole picture becomes coherent.

The notion of bounded element of a topological *-algebra was first proposed by Allan in 1965 \cite{allan} with the goal of developing a spectral theory for these algebras.
Allan's definition was applied to O*-algebras by  Schm\"udgen \cite{schm_bounded}, but he did not include the topic in his monograph \cite{schmu}.
Bounded elements in purely algebraic terms have been considered by Vidav \cite{vidav} and Schm\"udgen \cite{schm_weyl} with respect to some (positive) cone.
This ingenious approach seems to be unfit for general partial *-algebras,  since they may fail to possess a natural positive cone. Of course, if the locally convex partial *-algebra $\A$ contains
a dense *-algebra (like the $\Ao$-regular partial *-algebras considered in Section \ref{sec:Mbdd}), then it has a natural positive cone, namely, the closure of the positive cone of $\Ao$.
However,  we will not pursue in this direction here.
Finally, Cimpri\v{c} defines a notion of  element of a *-ring bounded with respect to a given module. His construction, albeit in a totally different context, presents some analogy
with the one we describe in Section  \ref{sec:Mbdd}, in particular the C*-seminorm used in Proposition \ref{prop_algebra}.
\medskip

The paper is organized as follows. After some preliminaries  about \pa s (Section \ref{sect_preliminaries}), taken mostly from \cite{ait_book} and \cite{antratsc},
we  discuss in Section \ref{sec:pmult-ips}
the interplay between the partial multiplications and sets of ips-forms. We show, in particular, how the strong partial multiplication on
the space $\LDH$ may be characterized in terms of ips-forms.
Then, in Section \ref{sec:Mbdd}, which is the core of the paper, we show how a sufficient family  $\M$ of ips-forms leads one to the appropriate notion of $\M$-bounded elements
 and of the strong partial multiplication
induced by $\M$. The corresponding spectral elements are defined and they are shown to behave as expected. Finally, in Section \ref{sec:represent},
we make some remarks on representations. In particular, we examine under which conditions a partial GC*-algebra may have a faithful representation by a partial GC*-algebra of operators,
that is, a representation in some space $\LDH$.
It is worth mentioning that the family $\M$ of ips-forms defines in a locally convex partial *-algebra a cone of positive elements, making possible a generalization
 Schm\"udgen's approach in \cite{schm_weyl} to the present framework. We leave this investigation to a future paper.

\section{Preliminaries}
\label{sect_preliminaries}

For general aspects of the theory of \pa s and of their representations, we refer to the monograph
\cite{ait_book}.
For the convenience of the reader, however, we   repeat here the essential definitions, following the
definitions and notations given there.

 First we   recall
that a \pa\ $\A$ is a complex vector space with conjugate linear
involution  $\ha $ and a distributive partial multiplication
$\cdot$, defined on a subset $\Gamma \subset \A \times \A$,
satisfying the property that $(x,y)\in \Gamma$ if, and only if,
$(y\ha ,x\ha )\in   \Gamma$ and $(x\cdot y)\ha = y\ha \cdot x\ha $.
From now on we will write simply $xy$ instead of $x\cdot y$ whenever
$(x,y)\in \Gamma$. For every $y \in \A$, the set of left (resp.
right) multipliers of $y$ is denoted by $L(y)$ (resp. $R(y)$), i.e.,
$L(y)=\{x\in \A:\, (x,y)\in \Gamma\}$, resp. $R(y)=\{x\in \A:\, (y,x)\in \Gamma\}$. We denote by $L\A$ (resp.
$R\A$)  the space of universal left (resp. right) multipliers of
$\A$.

In general, a \pa\ is not associative, but in several situations a weaker form of associativity holds. More precisely, we say
that $\A$ is \emph{semi-associative} if $y \in R(x)$ implies $yz\in R(x)$, for every $z \in R\A$, and
 $$
(xy)z=x(yz).
$$

Throughout this paper we will only consider partial *-algebras with unit: this means that there exists an element $e\in \A$ such that $e=e\ha$, $e\in R\A\cap L\A$
and $xe=ex=x$, for every $x\in \A$.

Let $\H$ be a complex Hilbert space and $\D$ a dense subspace of $\H$.
 We denote by $ \L\ad(\D,\H) $
the set of all (closable) linear operators $X$ such that $ {D}(X) = {\D},\; {D}(X\x) \supseteq {\D}.$ The set $
\L\ad(\D,\H ) $ is a  \pa\
 with respect to the following operations: the usual sum $X_1 + X_2 $,
the scalar multiplication $\lambda X$, the involution $ X \mapsto X\ad := X\x \up {\D}$ and the \emph{(weak)}
partial multiplication $X_1 \mult X_2 = {X_1}\ad\x X_2$, defined whenever $X_2$ is a weak right multiplier of
$X_1$ (we shall write $X_2 \in R^{\rm w}(X_1)$ or $X_1 \in L^{\rm w}(X_2)$), that is, whenever $ X_2 {\D} \subset
{\D}({X_1}\ad\x)$ and  $ X_1\x {\D} \subset {\D}(X_2\x).$

It is easy to check that $X_1 \in L^{\rm w}(X_2)$ if and only if there exists $Z \in \LDH$ such that
\begin{equation} \label{altwp}
\ip{X_2\xi}{X_1\ad \eta} = \ip{Z\xi}{\eta}, \quad \forall \xi, \eta \in \D.
\end{equation}
In this case $Z= X_1 \mult X_2$.
  $\LDH$ is neither associative nor semi-associative.
If $I$ denotes the identity operator of $\H$, we put $I_\D=I\up\D$. Then $I_\D$ is the unit of the partial *-algebra $ \L\ad(\D,\H)$.

If $\NG \subseteq \LDH$ we denote by $R^{\w}\NG$ the set of right multipliers of all elements of $\NG$. We recall that
 $$
R^{\rm w}\LDH= \{A\in \LDH:\, A\mbox{ is bounded and } A:\D \to \D\ha \},
$$
 where $$
\D\ha =\bigcap_{X\in \LDH}D({X\das}).
$$

We denote by $\LBDH$ the bounded part of $\LDH$, i.e., $\LBDH=\{X \in \LDH : X$  is a bounded operator$\}=\{X \in \LDH : \overline{X}\in {\mc B}(\H)\}$.

{A $\ad $-invariant} subspace $\MM$ of $\LDH$ is called a \emph{(weak) partial O*-algebra} if $X\mult Y \in \MM$, for every
$X, Y \in \MM$ such that $X \in L^{\rm w}(Y)$. $\LDH$ is the maximal partial O*-algebra on $\D$.

 The set $\LD:=\{X\in \LDH:\, X, X\ad:\D \to \D$\} is a *-algebra; more precisely, it is the maximal O*-algebra on $\D$
(for the theory of O*-algebras and their representations we refer to \cite{schmu}).

\medskip
Some interesting classes of partial O*-algebras (such as partial GW*-algebras) can be defined with help of
  certain topologies on $\LDH$ and its commutants.

The \emph{weak topology} ${\sf t}_w$ on $\LDH$ is  defined by the seminorms
$$ r_{\xi, \eta}(X)=|\ip{X\xi}{\eta}|, \quad X \in \LDH,\, \xi, \eta \in \D.$$
The \emph{strong topology} ${\sf t}_s$ on $\LDH$ is  defined by the seminorms
$$ p_\xi(X)=\|X\xi\|, \quad X \in \LDH, \, \xi \in \D.$$

\noi The \emph{strong* topology} ${\sf t}_{s^\ast}$ on  $\LDH$ is usually defined by the seminorms
 $$p^*_\xi (X)= \max\{\|X\xi\|, \|X\ad\xi\|\}, \,  \xi \in \D.$$

\noi If $\NG$ is a $\ad$-invariant subset of $\LDH$, the \emph{weak unbounded commutant} of $\NG$ is defined by
$$
\NG_\sigma'=\{Y\in \LDH:\, \ip{X\xi}{Y\ad\eta}=\ip{Y\xi}{X\ad\eta}, \; \forall\, X\in \NG, \xi,\eta\in \D\}.
$$
The {\em weak bounded commutant} $\NG_{\w}'$ of $\NG$  is defined by $\NG_{\w}'=\{Y \in \NG_\sigma':\, Y \mbox{ is bounded}\}.$

If $\NG$ is a \po, the {\em quasi-weak bounded commutant} $\NG_{{\rm q}\w}'$ of $\NG$ is defined as follows.
$$\NG_{{\rm q}\w}'= \{C\in \NG_{\w}':\, \ip{CX\ad\xi}{Y\ad\eta}=\ip{C\xi}{(X\mult Y) \eta}, \; \forall\, X\in L(Y), \xi,\eta\in \D\}.
$$

If $\NG$ is an O*-algebra of
bounded operators on $\D$, then $\NG''_{\w\sigma}= \overline{\NG}^{{\sf t}_{s^\ast}}$. This statement applies, in particular, to the set $\mathfrak{P}:= \{X\in \LBDH:\, X,
X\ad: \D \to \D\}$, which is an O*-algebra of bounded operators on $\D$ (it is in fact the bounded part of $\LD$) and $\mathfrak{P} \subset R^{\rm w}\LDH$. Then
$\mathfrak{P}''_{\w\sigma}= \overline{\mathfrak{P}}^{{\sf t}_{s^\ast}}$. The fact that  {$\mathfrak{P}'_{\w}={\mb C} I_\D$}, , implies that
$\overline{\mathfrak{P}}^{{\sf t}_{s^\ast}}=\LDH$ and, thus, $R^{\rm w}\LDH$ is
${\sf t}_{s^\ast}$-dense in $\LDH$.

\medskip
In $\LDH$ we can consider also the so-called \emph{strong multiplication} $\circ $. It is defined in the following
way:
\begin{equation}\label{eqn_strongmult}
\left\{\begin{array}{l}
X\circ Y \mbox{ is well-defined if } \; Y:\D\to D(\overline{X}), \; X\ad: \D \to D(\overline{Y\ad})\\
(X\circ Y)\xi= \overline{X}(Y\xi), \quad \forall\, \xi \in \D.
\end{array} \right.
\end{equation}

We shall write $Y \in R^{\rm s}(X)$ (or $X\in L^{\rm s}(Y)$). In general, this strong multiplication does not make
$\LDH$ into  a partial *-algebra, since the distributive property fails. However, a subspace $\MM$ of
$\LDH$ may happen to be a partial *-algebra with respect to the strong multiplication. In this case we say, as in
\cite{ait_book}, that $\MM$ is a strong partial O*-algebra.

\vspace{2mm} A \emph{*-representation} of a  \pa\ $\A$ in the
Hilbert space $\H$ is a linear map $\pi : \A \rightarrow\L\ad(\D,\H)$     such that:
(i) $\pi(x\x) = \pi(x)\ad$ for every $x \in \A$; (ii) $x \in L(y)$
in $\A$ implies $\pi(x) \in L^{\rm w}(\pi(y))$ and $\pi(x) \mult\pi(y) = \pi(xy).$
 The *-repres\-entation  $\pi$ is said to be \emph{bounded} if $\overline{\pi (x)} \in {\mc B}(\H)$ for every $x \in\A$.

\vspace{2mm} Let $\varphi$ be a positive sesquilinear form on
$D(\varphi) \times D(\varphi)$, where $D(\varphi)$ is a
subspace of $\A$. Then we have
\begin{align}
\varphi(x,y) &= \overline{\varphi(y,x)}, \ \ \ \forall \, x, y \in
D(\varphi),
\\
 |\varphi(x,y)|^2 &\leqslant \varphi(x,x) \varphi(y,y), \ \ \
\forall \, x, y \in D(\varphi). \label{2.2}
\end{align}
We put
\[
N_\varphi= \{ x \in D(\varphi) : \varphi(x,x)=0\}. \] By
\eqref{2.2}, we have
\[
N_\varphi= \{ x \in D(\varphi) : \varphi(x,y)=0, \ \ \ \forall \,
y \in D(\varphi) \},
\]
and so $N_\varphi$ is a subspace of $D(\varphi)$ and the quotient
space $D(\varphi) / N_\varphi := \{ \lambda_\varphi(x) \equiv
x + N_\varphi ; x \in D(\varphi) \}$ is a pre-Hilbert space with
respect to the inner product $\ip{\lambda_\varphi(x)}
{\lambda_\varphi(y)} = \varphi(x, y), x,y \in D(\varphi)$. We
denote by $\H_\varphi$ the Hilbert space obtained by completion of
$D(\varphi) / N_\varphi$.

\medskip
A positive sesquilinear form $\vp$  on $\A \times \A$ is said to be \emph{invariant}, and called an \emph{ips-form}, if
there exists a subspace $B(\varphi) $
of $\A$ (called a {\it core} for $\varphi$) with the properties
\begin{itemize}
\item[({\sf ips}$_1$)] $B(\varphi) \subset R\A$ ;

\item[({\sf ips}$_2$)] $\lambda_\varphi(B(\varphi))$ is dense in $\H_\varphi$ ;

\item[({\sf ips}$_3$)]  $\varphi(ax, y) = \varphi(x, a\x y), \, \forall \, a \in \A, \forall \, x,y \in B(\varphi)$ ;

\item[({\sf ips}$_4$)] $\varphi(a\x x, by) = \varphi(x, (ab)y), \ , \forall \, a \in L(b), \forall \, x,y \in B(\varphi)$.
\end{itemize}
In other words, an ips-form is an \emph{everywhere defined} biweight, in the sense of \cite{ait_book}.

 To every ips-form $\vp$ on $\A$, with core $B(\varphi) $, there corresponds a triple $(\pi_\vp, \lambda_\vp, \H_\vp)$, where $\H_\vp$ is a Hilbert space,
$\lambda_\vp$ is a linear map from $B(\varphi) $ into $\H_\vp$ and $\pi_\vp$ is a *-representation on $\A$ in the
Hilbert space $\H_\vp$. We refer to  \cite{ait_book} for more details on this celebrated GNS construction.

\medskip
 Let $\A$ be a \pa\ with unit $e$. We assume that $\A$ is a locally convex Hausdorff vector space under the topology
$\tau$ defined by a (directed) set $\{p_\alpha\}_{\alpha \in \I}$ of seminorms. Assume that\footnote{Condition ({\sf cl}) was called
({\sf t1}) in \cite{antratsc}.}
\begin{itemize}

\item[({\sf cl})] for every $x \in \A$, the linear map $\OL_x: R(x)\to \A$ with $\OL_x(y)=xy$, $y\in R(x)$,
is closed with respect to $\tau$, in the sense that, if $\{y_\alpha\}\subset R(x) $ is a net such that $y_\alpha \to y$   and $xy_\alpha \to z \in \A$, then $y\in R(x)$
and $z=xy$. 

 \end{itemize}

\noi Starting from the family of seminorms $\{p_\alpha\}_{\alpha \in
\I}$, we can define a second topology $\tau\ha $ on $\A$ by
introducing the set of seminorms $\{ p\ha _\alpha(x)\}$, where
$$
p\ha _\alpha(x)= \max\{p_\alpha(x), p_\alpha(x\ha )\}, \quad x \in \A.
$$
The involution $x\mapsto x\ha $  is automatically
$\tau\ha $-continuous. By ({\sf cl}) it follows that, for every $x \in
\A$, $\OL_x$ is $\tau\ha $-closed. And it turns out that the
map $\OR_y: x\in L(y)\mapsto xy\in \A$ is also  $\tau\ha $-closed.

\medskip

If $\Ao$ is a $\tau\ha $-dense subspace of  $R\A$,
then the restriction $\OL_x{\up{\Ao}}$ to $\Ao$ of the map $\OL_x$ is $\tau$-closable. Let us denote by
$\ZOL_x$ its $\tau$-closure defined on the following subspace of $\A$:
$$
D(\ZOL_x)=\{y \in \A: \exists \{y_\alpha\}\subset \Ao,
y_\alpha\stackrel{\tau}{\to} y; \, xy_\alpha \stackrel{\tau}{\to} z \in
\A\}.
$$

In terms of the latter, we may define  a new multiplication
$\qsmult$  on $\A$ by
$$
\left\{
\begin{array}{ll}
 y \in R_{\Ao}(x) \Leftrightarrow y \in D(\ZOL_x) \mbox{ and } x\ha  \in D(\ZOL_{y\ha })\\
x{\qsmult} y:=\ZOL_x y=\lim_\alpha
({\OL_x}{\up{\Ao}})y_\alpha.
 \end{array} \right.
$$
 We refer to the multiplication $\qsmult$ as the strong multiplication {\em induced} by $\Ao$. Clearly, $R_{\Ao}(x)\subset R(x)$, i.e.,
  if $x\qsmult y$ is well-defined, then $y \in R(x)$ and $x\qsmult y=xy$. On the other
hand, if $y\in R(x)$, $x\qsmult y$ need not be defined. The
definition itself implies that   $x\qsmult y$ is well-defined if,
and only if, $y\ha \qsmult x\ha $ is well-defined and one has
$$
 (x\qsmult y)\ha  = y\ha  \!\qsmult x\ha .
$$

We remark that, in general $\qsmult$ does not make   $\A$ into  a partial
*-algebra, since the distributive law may fail.

Let $\A$ be a partial *-algebra with unit $e$ and assume that $\A$ is a locally convex
space with respect to a given topology $\tau$. Then $\A$ is called
\emph{topologically regular} if it satisfies ({\sf cl}) and $R\A \cap
L\A$ contains a \emph{distinguished} *-algebra $\Ao$, i.e., $\Ao$
is a $\tau\ha $-dense *-subalgebra of $\A$ (containing the unit $e$)  such that, for the
multiplication $\qsmult$ induced by $\Ao$, the following
associative law holds, for all $x,y,z \in \A$:

\medskip
\centerline{ if $z \in R(y), yz\in R(x)$ and $y\in R_{\Ao}(x)$, then $z \in R(x\qsmult y),$ }

\noi and
\begin{equation}\label{assoc}
x(yz)=(x \qsmult y)z.
\end{equation}

 In particular the following semi-associativity with respect to $\Ao$ holds:
if $x\cci y$ is well-defined, then $x\cci(y b)$ is well defined for every $b\in\Ao$ and
$$
(x\cci y)b=x(y b),
$$
which follows easily from \eqref{assoc}.
\medskip

An element $a\in \A$ of a topologically regular \pa\ $\A$ is called \emph{left $\tau$-bounded} if there exists
$\gamma_a>0$ such that
\be\label{ltau-bdd}
p_\alpha (ax)\leq \gamma_a p_\alpha (x), \quad \forall\, x\in R\A, \,
\forall\, \alpha \in \I.
\en
 The set of left $\tau$-bounded elements of $\A$ is denoted by $\Alb$.
In general, $x\in\Alb$ does {\it not} imply that $x\ha \in\Alb$.
For $a \in \Alb$ we put
$$
\nlb{a}= \sup\{p_\alpha(ax): \alpha\in \I, \,x\in R\A,\,
p_\alpha(x)=1\}.
$$
It is easily seen that $\nlb{\cdot}$ is a norm on $\Alb$ \cite{antratsc}.

\medskip

A topologically regular \pa\  $\A$ with a
distinguished \mbox{*-subalgebra} $\Ao$ is called a
\emph{partial GC*-algebra} if
\begin{itemize}
\item[(i)] $\A$ is $\tau\ha $-complete;
\item[(ii)] $\Ao\subset \Alb$ and $\Ao$ is $\tau\ha $-dense in $\A$;
\item[(iii)]$\Alb$ is a C*-algebra with respect to the norm $\nlb{\cdot}$.
\end{itemize}

\section{Partial multiplication vs.  ips-forms}
\label{sec:pmult-ips}

We begin by examining in some detail the topological structure of $\LDH$ (or, more generally, of a \po\ $\MM$) when it is endowed with the topology ${\sf t}_s$ or ${\sf t}_{s^\ast}$.

As already mentioned, $\LDH$ contains a distinguished *-algebra $\mathfrak{P}$, ${\sf t}_{s^\ast}$-dense. It is easily seen that both the left and right multiplications
by fixed elements of $\mathfrak{P}$ are continuous for the two topologies ${\sf t}_s$ and ${\sf t}_{s^\ast}$.

\berem \label{rem_semiass}The semi-associativity with respect to $\mathfrak{P}$ can be easily checked as follows, without making reference to the topological regularity.
 Let $A_1, A_2\in\LDH$ with $A_1\mult A_2$ well-defined and $B\in\mathfrak{P}$. Then $A_2:\dd\rightarrow D({{A_1}\ad}^*)$ and $A_1:\dd\rightarrow D({{A_2}}^*)$.
Since $B:\dd\rightarrow\dd$, this implies that $A_2\mult B:\dd\rightarrow D({{A_1}\ad}^*)$.  On the other hand, we have that, for $\xi,\eta\in\dd$:
$$
\ip {A_2\mult B\xi}{A_1\ad\eta}=\ip { B\xi}{A_2\ad\mult A_1\ad\eta}=\ip { \xi}{B^*(A_2\ad\mult A_1\ad)\eta};
$$
this implies that $A_1\ad\eta\in D((A_2\mult B)^*)= D((A_2 B)^*)$. In conclusion $A_1\in L^{\w}(A_2\mult B)$. \enrem

Elements of $\mathfrak{P}$ are left ${\sf t}_s$-bounded in the sense of \eqref{ltau-bdd} (see also
\cite{antratsc}); but the set of all left ${\sf t}_s$-bounded elements is larger, namely it is $\LBDH$,
 and it is a C*-algebra.

Another relevant feature of $\LDH$ is the existence of sufficiently many ips-forms. Indeed, if $\xi \in \D$, then every positive sesquilinear form $\vp_\xi$ with
$$\vp_\xi(X,Y):=\ip{X\xi}{Y\xi}$$
 is a ${\sf t}_s$-continuous (and, {\emph{a fortiori}}, ${\sf t}_{s^\ast}$-continuous) ips-form. With the words {\em sufficiently many},
we mean that the unique element $X \in \LDH$ such that \mbox{$\vp_\xi(X,X)=0$,} for every $\xi \in \D$, is   $X=0$ .

The family $\M=\{\vp_\xi;\, \xi \in \D\}$ can also be used to describe the weak multiplication $\mult$ of $\LDH$. Indeed, we have:
\begin{prop} \label{3.2}The weak multiplication $X\mult Y$ of two elements $X, Y \in \LDH$ is well-defined if and only if there exists $Z \in \LDH$ such that
\begin{equation}\label{wp_forms}\vp_\xi (YA,X\ad B)= \vp_\xi (ZA, B), \quad \forall \xi \in \D,\, A,B \in \mathfrak{P}.\end{equation}
\end{prop}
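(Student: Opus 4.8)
The plan is to prove both implications by reducing the identity \eqref{wp_forms} to the characterization \eqref{altwp} of the weak multiplication, using the fact that the vectors $A\xi$ and $B\xi$, as $A,B$ range over $\mathfrak{P}$ and $\xi$ over $\D$, are sufficiently rich to recover equalities of inner products on $\D\times\D$. First I would unwind the definition of $\vp_\xi$: since $\vp_\xi(U,V)=\ip{U\xi}{V\xi}$, the left-hand side of \eqref{wp_forms} reads $\ip{(YA)\xi}{(X\ad B)\xi}=\ip{YA\xi}{X\ad B\xi}$, and the right-hand side reads $\ip{ZA\xi}{B\xi}$. So \eqref{wp_forms} is equivalent to
\begin{equation*}
\ip{Y(A\xi)}{X\ad (B\xi)} = \ip{Z(A\xi)}{B\xi}, \quad \forall\,\xi\in\D,\ A,B\in\mathfrak{P}.
\end{equation*}
Here it is essential that elements of $\mathfrak{P}$ map $\D$ into $\D$, so that $A\xi,B\xi\in\D$ and all the operators above are applied to vectors of $\D$; I would note this at the outset.

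For the forward implication, assume $X\mult Y$ is well-defined and set $Z=X\mult Y$. By \eqref{altwp} we have $\ip{Y\sigma}{X\ad\tau}=\ip{Z\sigma}{\tau}$ for all $\sigma,\tau\in\D$. Taking $\sigma=A\xi$ and $\tau=B\xi$ (both in $\D$) immediately yields the displayed identity, hence \eqref{wp_forms}. This direction is essentially a specialization and should be routine.

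The converse is where the real content lies, and I expect it to be the main obstacle. Assume some $Z\in\LDH$ satisfies \eqref{wp_forms}, equivalently the displayed inner-product identity for all $A,B\in\mathfrak{P}$ and $\xi\in\D$. I want to conclude $\ip{Y\sigma}{X\ad\tau}=\ip{Z\sigma}{\tau}$ for \emph{all} $\sigma,\tau\in\D$, which by \eqref{altwp} gives $X\in L^{\rm w}(Y)$ with $X\mult Y=Z$. The plan is to exploit the density established in the preliminaries: since $\mathfrak{P}$ is ${\sf t}_{s^\ast}$-dense in $\LDH$ with $\mathfrak{P}'_{\w}=\mb C I_\D$, and $I_\D$ is approximated ${\sf t}_{s^\ast}$ by elements of $\mathfrak{P}$, one can choose a net $\{A_\beta\}\subset\mathfrak{P}$ with $A_\beta\xi\to\sigma$ (strongly) and $\{B_\gamma\}\subset\mathfrak{P}$ with $B_\gamma\xi\to\tau$, for any prescribed $\sigma,\tau\in\D$, ranging $\xi$ over $\D$ as well. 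Concretely, taking $B=I_\D$ (or a net approximating it) reduces \eqref{wp_forms} to $\ip{YA\xi}{X\ad\xi}=\ip{ZA\xi}{\xi}$, and then using the approximation of $A\xi$ by arbitrary vectors of $\D$ one removes the constraint $\sigma\in\{A\xi\}$; a symmetric argument in the second slot removes the constraint on $\tau$. The delicate point, which I would treat carefully, is that $Y$, $X\ad$ and $Z$ need not be bounded, so passing to the limit inside the inner products is not automatic: one must check that, say, $YA_\beta\xi\to Y\sigma$. This requires choosing the approximating net so that convergence takes place in the graph (${\sf t}_s$ or ${\sf t}_{s^\ast}$) topology adapted to the unbounded operator in question — precisely the role of condition ({\sf cl}) and the closedness of the multiplication maps recalled in the preliminaries. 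Alternatively, and perhaps more cleanly, I would first fix $\xi$ and vary only $A,B$, showing that the identity on the dense subspace $\mathfrak{P}\xi$ of $\H_{\vp_\xi}$ extends by continuity of the sesquilinear forms $\vp_\xi$, which are ${\sf t}_s$-continuous; this sidesteps the unboundedness by working inside the GNS space $\H_{\vp_\xi}$ where $\lambda_{\vp_\xi}(\mathfrak{P})$ is dense. The crux of the whole argument is thus to upgrade the identity from the dense family $\{A\xi:A\in\mathfrak{P}\}$ to all of $\D$ while respecting the unbounded character of $X,Y,Z$.
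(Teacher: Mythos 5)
Your forward implication is exactly the paper's: specialize \eqref{altwp} to the vectors $A\xi, B\xi \in \D$. The problem is the converse, where your plan has a genuine gap and also misses the one-line argument the paper uses. The gap: you propose to recover $\ip{Y\sigma}{X\ad\tau}=\ip{Z\sigma}{\tau}$ for arbitrary $\sigma,\tau\in\D$ by choosing nets $\{A_\beta\},\{B_\gamma\}\subset\mathfrak{P}$ with $A_\beta\xi\to\sigma$, $B_\gamma\xi\to\tau$ and passing to the limit inside the inner products. As you yourself note, this requires $YA_\beta\xi\to Y\sigma$, $X\ad B_\gamma\xi\to X\ad\tau$ and $ZA_\beta\xi\to Z\sigma$, and nothing in the hypothesis provides this: the ${\sf t}_{s^\ast}$-density of $\mathfrak{P}$ in $\LDH$ gives convergence of the vectors $A_\beta\xi$, not of their images under the unbounded operators $Y$, $X\ad$, $Z$. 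Condition ({\sf cl}) is of no help either, since closedness of the map $\OL_X$ presupposes convergence of both the net and its image, which is precisely what is unknown here. The alternative sketch inside the GNS space $\H_{\vp_\xi}$ is left equally incomplete: the ${\sf t}_s$-continuity of $\vp_\xi$ controls limits in the \emph{operator} variables, not in the vector slots you need. So, as written, the converse is not proved.

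The idea you are missing is polarization, and it makes everything immediate. Since $I_\D\in\mathfrak{P}$, take $A=B=I_\D$ in \eqref{wp_forms}: because the hypothesis is quantified over \emph{all} $\xi\in\D$, this yields $\ip{Y\xi}{X\ad\xi}=\ip{Z\xi}{\xi}$ for every $\xi\in\D$, i.e.\ the two sesquilinear forms $(\xi,\eta)\mapsto\ip{Y\xi}{X\ad\eta}$ and $(\xi,\eta)\mapsto\ip{Z\xi}{\eta}$ on $\D\times\D$ agree on the diagonal. By the polarization identity they agree everywhere, which is exactly \eqref{altwp}; hence $X\in L^{\rm w}(Y)$ and $X\mult Y=Z$. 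This is the paper's proof. Incidentally, if you do want to exploit the richness of the set $\{A\xi: A\in\mathfrak{P}\}$, note that $\mathfrak{P}$ contains every rank-one operator $\phi\mapsto\ip{\phi}{\zeta}\eta$ with $\eta,\zeta\in\D$, so that for any fixed $\xi\neq 0$ one has $\mathfrak{P}\xi=\D$ exactly, with no limits to take at all; but the polarization route avoids even this detour.
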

\begin{proof} The necessity of the condition follows easily from \eqref{altwp}. As for the sufficiency, one can put $A=B=I_\D$ in \eqref{wp_forms} and use the polarization identity for getting  \eqref{altwp}.
\end{proof}
Another characterization of the existence of the weak multiplication can be given in terms of approximation by elements of $\mathfrak{P}$.
\begin{prop} \label{3.3}The weak multiplication $X\mult Y$ of two elements $X, Y \in \LDH$ is well-defined if and only if there exists a net $\{B_\alpha\}$
 of elements of  $\mathfrak{P}$ such that
\begin{equation}\label{wp_appr}
B_\alpha \stackrel{{\sf t}_s}{\to} Y \quad \mbox{and}\quad X\mult B_\alpha\; \mbox{converges weakly to some $Z\in \LDH$.}
 \end{equation}
\end{prop}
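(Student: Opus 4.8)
The plan is to derive both implications directly from the characterization \eqref{altwp} of the weak multiplication, combined with the ${\sf t}_{s^\ast}$-density of $\mathfrak{P}$ in $\LDH$ recalled above. The key observation I would exploit is that, since $\mathfrak{P}\subset R^{\rm w}\LDH$, the product $X\mult B_\alpha$ is well-defined for \emph{every} $\alpha$, so that \eqref{altwp} always furnishes the identity
$$\ip{(X\mult B_\alpha)\xi}{\eta}=\ip{B_\alpha\xi}{X\ad\eta},\qquad\forall\,\xi,\eta\in\D.$$
In this identity the multiplication has been shifted onto the adjoint, so that the ``input'' vector $B_\alpha\xi$ enters only through its strong behaviour while $X\mult B_\alpha$ enters only weakly; the whole argument is organised around passing to the limit in this one equation.

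For the necessity I would start from a well-defined product $X\mult Y$ and use the ${\sf t}_{s^\ast}$-density of $\mathfrak{P}$ to pick a net $\{B_\alpha\}\subset\mathfrak{P}$ with $B_\alpha\stackrel{{\sf t}_{s^\ast}}{\to}Y$; since ${\sf t}_s$ is coarser than ${\sf t}_{s^\ast}$, this already yields $B_\alpha\stackrel{{\sf t}_s}{\to}Y$, the first half of \eqref{wp_appr}. Then, fixing $\xi,\eta\in\D$ and using $\|B_\alpha\xi-Y\xi\|\to0$, the right-hand side of the displayed identity converges to $\ip{Y\xi}{X\ad\eta}$, which equals $\ip{(X\mult Y)\xi}{\eta}$ by \eqref{altwp}. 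Hence $X\mult B_\alpha$ converges in ${\sf t}_w$ to $Z:=X\mult Y\in\LDH$, giving the second half of \eqref{wp_appr}.

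For the sufficiency I would assume a net as in \eqref{wp_appr} and pass to the limit in the same identity: the left-hand side tends to $\ip{Z\xi}{\eta}$ by the assumed weak convergence, and the right-hand side tends to $\ip{Y\xi}{X\ad\eta}$ by the assumed strong convergence $B_\alpha\xi\to Y\xi$. This produces $\ip{Y\xi}{X\ad\eta}=\ip{Z\xi}{\eta}$ for all $\xi,\eta\in\D$ with $Z\in\LDH$, and \eqref{altwp}, read in the reverse direction, immediately gives $X\in L^{\rm w}(Y)$ together with $X\mult Y=Z$.

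I do not expect a serious obstacle here; the argument is essentially a limiting version of \eqref{altwp}. The one point that needs care, and the reason \eqref{wp_appr} is stated with a ${\sf t}_s$-approximation rather than a merely ${\sf t}_w$-one, is the asymmetry of the two convergences: the factor $B_\alpha$ must approach $Y$ strongly in order to control $\ip{B_\alpha\xi}{X\ad\eta}$, and it is precisely the ${\sf t}_{s^\ast}$-density of $\mathfrak{P}$ that guarantees the existence of such a net in the necessity direction.
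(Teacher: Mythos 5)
Your proof is correct and follows essentially the same route as the paper's: both directions rest on passing to the limit in the identity $\ip{(X\mult B_\alpha)\xi}{\eta}=\ip{B_\alpha\xi}{X\ad\eta}$ (valid since $\mathfrak{P}\subset R^{\rm w}\LDH$), using the ${\sf t}_{s^\ast}$-density of $\mathfrak{P}$ for the necessity and \eqref{altwp} to conclude. Your write-up is merely more explicit about why each product $X\mult B_\alpha$ exists and why strong (rather than weak) convergence of $B_\alpha$ is needed, which the paper leaves implicit.
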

\begin{proof}Assume that $Y$ satisfies \eqref{wp_appr}. Then we have, for every $\xi, \eta \in \D$,
$$ \ip{Y\xi}{X\ad\eta} = \lim_\alpha \ip{B_\alpha\xi}{X\ad\eta}=\lim_\alpha \ip{X\mult B_\alpha\xi}{\eta} = \ip{Z\xi}{\eta}.$$
The statement then follows from \eqref{altwp}.

On the other hand, assume that $X\mult Y$ is well-defined and let $\{B_\alpha\}$ be a net in $\mathfrak{P}$ converging to $Y$.
Then, for every $\xi, \eta \in \D$,
$$ \lim_\alpha\ip{X\mult B_\alpha\xi}{\eta} = \lim_\alpha \ip{B_\alpha\xi}{X\ad\eta}= \ip{Y\xi}{X\ad\eta}=\ip{X \mult Y\xi}{\eta}.$$
\end{proof}

The strong multiplication of $\LDH$, given by \eqref{eqn_strongmult}, can be conveniently described also by means of the vector forms defined by the inner product of $\H$. To prove this result we need the following lemma.

\begin{lemma}\label{lemma}Let $X \in \LDH$. Then
\begin{itemize}
  \item[(i)] The operator $S(X):=(I +\overline{X}X\ha)^{-1}\upharpoonright \D$ is a weak left multiplier of $X$ and a weak right multiplier of $X\ad$;
  \item[(ii)] $S(X)\D$ is a core for $X\ha$.
\end{itemize}
\end{lemma}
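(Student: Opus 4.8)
The plan is to reduce the statement to the classical von Neumann theorem on operators of the form $C\x C$. First I would set $T := \overline{X\ad}$, the closure of $X\ad = X\x\up\D$; since $X\ad$ is a restriction of the closed operator $X\x$, it is closable, and $T$ is closed and densely defined with $D(T) \supseteq \D$. By von Neumann's theorem, $T\x T$ is positive and self-adjoint, $R := (I + T\x T)^{-1}$ is a bounded, everywhere defined, positive self-adjoint operator with $0 \le R \le I$ and $\mathrm{Ran}\,R = D(T\x T)$, and $D(T\x T)$ is a core for $T$. I would then identify $S(X) = R\up\D$: since $\overline X \subseteq (\overline{X\ad})\x = T\x$, one has $\overline X X\ad \subseteq T\x T$, so that $(I + \overline X X\ad)^{-1}$ is the restriction to $\D$ of the von Neumann resolvent $R$. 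In particular $S(X) \in \LBDH$, with $\overline{S(X)} = R$, $S(X)\x = R$ and $S(X)\ad = R\up\D = S(X)$.

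For part (i), I would just verify the two domain inclusions defining the weak multiplication. To get $S(X) \in L^{\w}(X)$ (i.e. $S(X)$ a weak left multiplier of $X$), the inclusion $X\D \subseteq D(S(X)\ad\x) = D(R) = \H$ is trivial, while $S(X)\x\D = R\D \subseteq \mathrm{Ran}\,R = D(T\x T) \subseteq D(T) \subseteq D(X\x)$ gives the other, using $T = \overline{X\ad} \subseteq X\x$. The symmetric computation, using that the involution satisfies $(X\ad)\ad = X$, shows $X\ad \in L^{\w}(S(X))$, i.e. $S(X)$ is a weak right multiplier of $X\ad$: here $S(X)\D = R\D \subseteq D(X\x) = D((X\ad)\ad\x)$ and $(X\ad)\x\D \subseteq \H = D(S(X)\x)$.

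For part (ii), the goal is to show that $R\D = S(X)\D$ is a core for $X\ad$, equivalently for $T = \overline{X\ad}$, and I would argue with graph norms. Equip $D(T\x T)$ with the norm $\eta \mapsto \norm{(I + T\x T)\eta}$; then $R$ is an isometric isomorphism of $\H$ onto this normed space, so $R\D$ is dense in $D(T\x T)$ for this norm, because $\D$ is dense in $\H$. Since for $\eta \in D(T\x T)$ one has $\norm{(I+T\x T)\eta}^2 = \norm{\eta}^2 + 2\norm{T\eta}^2 + \norm{T\x T\eta}^2 \ge \norm{\eta}^2 + \norm{T\eta}^2$, this norm dominates the $T$-graph norm; hence $R\D$ is dense in $D(T\x T)$ in the $T$-graph norm. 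As $D(T\x T)$ is itself a core for $T$, i.e. dense in $D(T)$ in the $T$-graph norm, transitivity of density yields that $R\D$ is dense in $D(T)$ in the graph norm, i.e. a core for $T = \overline{X\ad}$.

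The main obstacle is the identification step, namely making rigorous that $(I + \overline X X\ad)^{-1}\up\D$ is exactly $R\up\D$. The subtlety is that $\overline X$ need not coincide with $(\overline{X\ad})\x = (X\ad)\x$ — equality holds only when $\D$ is a core for $X\x$ — so the literal operator $\overline X X\ad$ is merely a symmetric positive restriction of $T\x T$ and need not be essentially self-adjoint on its natural domain. One must therefore read $I + \overline X X\ad$ as the canonical positive self-adjoint operator $I + T\x T$ attached to $T = \overline{X\ad}$, and confirm this is the object the construction intends; the fact that the claimed core is a core \emph{for $X\ad$} (whose closure is precisely $T$) is exactly what dictates the choice $T = \overline{X\ad}$. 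Once this is settled, everything else is routine domain bookkeeping on top of von Neumann's theorem.
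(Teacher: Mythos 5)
Your strategy (von Neumann's theorem for $T\x T$ plus graph-norm density) is sound, but you apply it to the wrong operator, and the ``main obstacle'' you identify is a phantom created by a misreading of the statement. In the lemma, $X\ha$ denotes the full Hilbert-space adjoint $X\x$, not the restricted adjoint $X\ad = X\x\up\D$: the paper's own proof computes with $D(\overline{X}X\x)$ and shows that the graph of $X\x\up S(X)\D$ is dense in the graph of $X\x$. Since $X\in\LDH$ satisfies $D(X\x)\supseteq\D$, the operator $X\x$ is closed and densely defined, $(X\x)\x=\overline{X}$, and therefore $\overline{X}X\ha=\overline{X}X\x=(X\x)\x X\x$ \emph{is} exactly the von Neumann operator associated with $T=X\x$. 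Thus $(I+\overline{X}X\x)^{-1}$ is an honest bounded, everywhere defined operator with range $D(\overline{X}X\x)$, and no reinterpretation of the definition of $S(X)$ is needed.

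The choice $T=\overline{X\ad}$ causes real damage, not just aesthetic trouble. First, $T\x T=(X\ad)\x\,\overline{X\ad}$ and $\overline{X}X\x$ are in general two \emph{different} self-adjoint extensions of the symmetric operator $\overline{X}X\ad$ (they coincide precisely when $\D$ is a core for $X\x$), so your $R\up\D$ is not the operator $S(X)$ of the lemma. Second, and decisively, your part (ii) proves that $R\D$ is a core for $\overline{X\ad}$; whenever $\overline{X\ad}\subsetneq X\x$ this is \emph{not} a core for $X\x$, since for any subspace $\mathcal C\subseteq D(\overline{X\ad})$ the graph closure of $X\x\up\mathcal C$ stays inside the graph of $\overline{X\ad}$, a proper closed subspace of the graph of $X\x$. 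The full-adjoint statement is exactly what the paper needs afterwards: the proof of Theorem~\ref{thm_stronquasistrong} uses that $S(X)\D$ is a core for $X\x$ to conclude $(X\ha\up S(X)\D)\ha=(X\x)\x=\overline{X}$; with a core for $\overline{X\ad}$ one would only get $(X\ad)\x$, which may strictly contain $\overline{X}$. The repair is immediate: set $T=X\x$ throughout. Then $R=(I+\overline{X}X\x)^{-1}=\overline{S(X)}$, your domain bookkeeping in (i) is unchanged, and your graph-norm argument in (ii) --- $R\D$ is dense in $D(T\x T)$ for the norm $\norm{(I+T\x T)\,\cdot\,}$, which dominates the $T$-graph norm, and $D(T\x T)$ is a core for $T$ --- proves the lemma as stated. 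With that correction, your proof of (ii) becomes a legitimate alternative to the paper's direct graph-orthogonality computation.
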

\begin{proof} (i): We need to prove that $X:\D \to D(S(X)\ad\x)$ and $S(X)\ad: \D \to D(X\x)$. The first condition is trivially satisfied since $D(S(X)\ad\x)=\H$, the operator $S(X)$ being symmetric and bounded. For the second, we have
$$S(X)\ad \D= S(X)\D \subset \overline{S(X)} \H = D(\overline{X}X\x)\subset D(X\x).$$

\noi (ii): First, we check that $S(X)\D$ is dense in $\H$. Let $\eta \in \H$ be such that $\ip{S(X)\xi}{\eta}=0$, for every $\xi \in \D$. Then
$$ \ip{\xi}{S(X)\eta}=\ip{S(X)\xi}{\eta}=0, \;\forall \,\xi\in \D.$$
By the density of $\D$, we get $S(X)\eta=0$. But $S(X)$ is one-to-one, thus $\eta=0$.
To prove that $S(X)\D$ is a core for $X^*$, it is enough to show that the unique vector $\{\phi, X\x \phi\}$ in the graph of $X\x$ which is orthogonal to $\{ \, \{\eta, X\x \eta\};\, \eta \in S(X)\D\}$ is zero.
Indeed, putting $\eta= S(X)\xi$, $\xi \in \D$,
\begin{align*}\ip{\{\phi, X\x \phi\}}{\{\eta, X\x \eta\}}&= \ip{\{\phi, X\x \phi\}}{\{S(X)\xi, X\x S(X)\xi\}}\\
&= \ip{\phi}{S(X)\xi} + \ip{X\x \phi}{X\x S(X)\xi}\\
&= \ip{\phi}{S(X)\xi} + \ip{ \phi}{\overline{X}X\x S(X)\xi}\\
&=\ip{\phi}{(I+\overline{X}X\x )S(X)\xi}\\
&= \ip{\phi}{(I+\overline{X}X\x) (I +\overline{X}X\x)^{-1}\xi}\\
&= \ip{\phi}{\xi}=0, \; \forall\, \xi \in \D.
\end{align*}
Hence $\phi=0$. In the previous computation we took into account the following facts: (a) the operator $X\x S(X)$ is bounded; (b) the operator
\mbox{$\overline{X}X\x (I +\overline{X}X\x)^{-1}$} is also everywhere defined and bounded; hence we get 
 \mbox{$X\x (I +\overline{X}X\x)^{-1}\xi \in D(\overline{X})$}, for every $\xi \in \D$,.
\end{proof}

\betheo \label{thm_stronquasistrong}Let $X,Y \in \LDH$. The following statements are equivalent:
\begin{itemize}
  \item[(i)] $X \in L^{\rm s}(Y)$
  \item[(ii)] $X \in L^{\w}(Y)$ and
\begin{itemize}
\item[(ii$_1$)]$\ip{(X\mult Y)\xi}{Z\ad \eta}=\ip{Y\xi}{(X\ad\mult Z\ad)\eta}, \; \forall\, Z \in L^{\w}(X), \xi, \eta \in \D;$
\item[(ii$_2$)]$\ip{(Y\ad\mult X\ad)\xi}{V \eta}=\ip{X\ad\xi}{(Y\mult V)\eta}, \; \forall\, V \in R^{\w}(Y), \xi, \eta \in \D$.
\end{itemize}
\end{itemize}
\entheo

\begin{proof}

Let $X,Y \in \LDH$.
The implication (i)$\Rightarrow$(ii) is easy. We prove that (ii)$\Rightarrow$(i).
 Let $X,Y \in \LDH$ satisfy (ii).  We begin with observing that the conditions (ii$_1$) and (ii$_2$) are, respectively, equivalent to the following ones\footnote
 {\,We remind the reader that if $T$ is not densely defined, $D(T\ha)=\{\eta \in \H: \exists \eta\ha\in \H: \, \ip{T\xi}{\eta}=\ip{\xi}{\eta\ha} ,\, \forall \xi \in D(T)\}$ is not necessarily the domain of a well-defined operator.}
\begin{align*}
 &Y:\D \to D((X\ha\up Z\ad \D)\ha), \; \forall \,Z \in L^{\w}(X) \\ \mbox{ and }
 &X\ad:\D \to D(({Y}\ad\x \up V \D)\ha), \; \forall \,V \in R^{\w}(Y).
\end{align*}
By Lemma \ref{lemma}, $S(X) \in L^{\w}(X)$ and since $S(X)\D$ is a core for $X\x$,
$$ (X\ha\upharpoonright S(X) \D)\ha= (X\x)\x = \overline{X}.$$
Thus, $Y:\D \to D(\overline{X})$.
By applying again Lemma \ref{lemma} to the operator $Y\ad$ we obtain that $S(Y\ad)$ is a right multiplier of $Y$ and $S(Y\ad)\D$ is a core for $Y\ad\x$. Then, $X\ad:\D\to D(\overline{Y\ad})$.
In conclusion $Y \in L^{\rm s}(X)$. \end{proof}

An interesting aspect of the interplay of the weak and strong multiplication in $\LDH$ is the following  \emph{mixed}  associativity property
 \cite[Prop.3.5]{antratsc}, which proves to be useful in many situations.
\begin{prop} Let $X,Y, Z \in \LDH$. Assume that $X\mult Y$, $(X\mult Y)\mult
Z$ and $Y \circ Z$ are all well-defined. Then $X \in L^{\rm w}(Y \circ Z)$ and
\begin{equation}\label{assoc3}X\mult(Y\circ Z)= (X\mult Y) \mult Z.\end{equation}
\end{prop}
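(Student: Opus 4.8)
The plan is to work entirely at the level of sesquilinear forms on $\D$, exploiting the characterization \eqref{altwp} of the weak multiplication. Since $(X\mult Y)\mult Z$ is assumed well-defined, it is an element of $\LDH$ and is the obvious candidate for the product $X\mult(Y\circ Z)$. Thus, by \eqref{altwp}, it suffices to prove the single identity $\ip{(Y\circ Z)\xi}{X\ad\eta}=\ip{((X\mult Y)\mult Z)\xi}{\eta}$ for all $\xi,\eta\in\D$: this simultaneously yields $X\in L^{\rm w}(Y\circ Z)$ and \eqref{assoc3}.

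First I would collect the three facts furnished by the hypotheses. From $X\mult Y$ well-defined, \eqref{altwp} gives $\ip{Y\xi}{X\ad\eta}=\ip{(X\mult Y)\xi}{\eta}$; from $(X\mult Y)\mult Z$ well-defined it gives $\ip{Z\xi}{(X\mult Y)\ad\eta}=\ip{((X\mult Y)\mult Z)\xi}{\eta}$; and the definition \eqref{eqn_strongmult} of the strong product gives $(Y\circ Z)\xi=\overline{Y}(Z\xi)$ for $\xi\in\D$.

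The core of the argument is then a short chain of inner-product manipulations. Writing $(Y\circ Z)\xi=\overline{Y}(Z\xi)$, I would move $\overline{Y}$ into the right-hand slot of $\ip{\cdot}{\cdot}$; this is legitimate because $X\in L^{\rm w}(Y)$ entails $X\x\D\subset D(Y\x)$, and $X\ad\eta=X\x\eta$ for $\eta\in\D$, so $X\ad\eta\in D(Y\x)=D(\overline{Y}\x)$, whence $\ip{(Y\circ Z)\xi}{X\ad\eta}=\ip{Z\xi}{Y\x X\ad\eta}$. The key intermediate step is the vector identity $Y\x X\ad\eta=(X\mult Y)\ad\eta$ for every $\eta\in\D$, which I would extract from the first collected fact: rewriting it as $\ip{\xi}{Y\x X\ad\eta}=\ip{\xi}{(X\mult Y)\ad\eta}$ (using $X\ad\eta\in D(Y\x)$ on the left and $\eta\in\D\subset D((X\mult Y)\x)$ on the right) and invoking the density of $\D$ in $\H$. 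Substituting this and applying the second collected fact completes the chain, $\ip{(Y\circ Z)\xi}{X\ad\eta}=\ip{Z\xi}{(X\mult Y)\ad\eta}=\ip{((X\mult Y)\mult Z)\xi}{\eta}$, which is the required identity.

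I expect the only real obstacle to be the bookkeeping of domains, since with unbounded operators each passage of an operator from one slot of the inner product to the other is valid only after checking membership in the relevant domain. The genuinely load-bearing point is $X\ad\eta\in D(Y\x)$, which both legitimizes moving $\overline{Y}$ and underlies the identification $(X\mult Y)\ad=Y\x X\ad$ on $\D$; this is exactly where the hypothesis $X\in L^{\rm w}(Y)$ enters. For the conclusion $X\in L^{\rm w}(Y\circ Z)$ to be meaningful one should also record that $Y\circ Z\in\LDH$: its domain is $\D$, and $D((Y\circ Z)\x)\supseteq\D$ follows from the condition $Y\ad:\D\to D(\overline{Z\ad})$ in \eqref{eqn_strongmult}, which gives $(Y\circ Z)\x\eta=Z\x Y\x\eta$ for $\eta\in\D$.
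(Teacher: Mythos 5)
Your proof is correct, and the domain bookkeeping — which is the only place such an argument can go wrong — is handled properly at every step: the reduction of both conclusions to the single identity $\ip{(Y\circ Z)\xi}{X\ad\eta}=\ip{((X\mult Y)\mult Z)\xi}{\eta}$ is legitimate by \eqref{altwp} because the candidate $(X\mult Y)\mult Z$ lies in $\LDH$; the passage $\ip{\overline{Y}(Z\xi)}{X\ad\eta}=\ip{Z\xi}{Y\x X\ad\eta}$ uses exactly the condition $X\x\D\subset D(Y\x)$ built into the hypothesis $X\in L^{\rm w}(Y)$; the identity $Y\x X\ad\eta=(X\mult Y)\ad\eta$ follows from \eqref{altwp} and the density of $\D$; and you rightly verify that $Y\circ Z\in\LDH$, with $(Y\circ Z)\x\eta=Z\x Y\x\eta$ for $\eta\in\D$, which is needed both for the statement $X\in L^{\rm w}(Y\circ Z)$ to make sense and for \eqref{altwp} to be applicable. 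One point of comparison is moot, however: the paper does not prove this proposition at all — it quotes it as Proposition 3.5 of \cite{antratsc} — so there is no internal argument to measure yours against; your proof is the natural one, carried out via the form characterization \eqref{altwp} of the weak product rather than by directly verifying the two domain inclusions in the definition of $\mult$, and it is complete as it stands.
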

\noi In other words, \eqref{assoc} is valid in $\LDH$ with the strong partial multiplication.

\berem \label{rem_topreg} The partial O*-algebra $\LDH$ is topologically regular when endowed with the strong topology ${\sf t}_s$. Indeed, the multiplication induced by ${\mathfrak P}$ is a restriction of the strong multiplication of $\LDH$, since, if $Y\in D(\ZOL_X)$,
then there exists a net $\{Y_\alpha\} \subset \mathfrak{P}$ and $Z \in \LDH$  such that $Y_\alpha \xi \to Y\xi$ and $X\mult Y_\alpha \xi \to Z\xi$, for every $\xi \in \D$. This implies that $Y\xi \in D(\overline{X})$ and $Z\xi = \overline{X}Y\xi$, for every $\xi \in \D$. In a similar way, one proves that, if $X\ad \in D(\ZOL_{Y\ad})$,  then  $X\ad\xi \in D(\overline{Y\ad})$. Hence, if the multiplication induced by ${\mathfrak P}$ of $X$ and $Y$ is well-defined, then $X\circ Y$ is also well-defined and the two products coincide. The statement then follows from \eqref{assoc3}.

\enrem

The next two statements are the analogues of Proposition \ref{3.2} and Proposition \ref{3.3} and can be proved in a similar way.
\begin{prop} The strong multiplication $X\circ Y$ of two elements $X, Y \in \LDH$ is well-defined if and only if there exists $W \in \LDH$ such that
\begin{align*}\vp_\xi (WA,Z\ad B)&= \vp_\xi (YA, (X\ad\mult Z\ad) B),\\
&  \mbox{ whenever }Z\in L^{\w}(X),\; \forall \,\xi \in \D,\, A,B \in \mathfrak{P},
\end{align*}
and
\begin{align*}\vp_\xi (W\ad A,V B)&= \vp_\xi (X\ad A, (Y\mult V) B),\\
&  \mbox{ whenever }V\in R^{\w}(Y),\;\forall \,\xi \in \D,\, A,B \in \mathfrak{P}.
\end{align*}
\end{prop}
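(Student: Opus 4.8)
The plan is to reduce both directions to Theorem \ref{thm_stronquasistrong}, exactly as Proposition \ref{3.2} reduces the weak case to \eqref{altwp}. The natural candidate for the operator appearing in the statement is $W=X\mult Y$: whenever $X\circ Y$ is well-defined it coincides with the weak product $X\mult Y$ (the strong product is a restriction of the weak one), and the involution of the partial *-algebra $\LDH$ gives $W\ad=Y\ad\mult X\ad$. Throughout I will use that every $C\in\mathfrak{P}$ maps $\D$ into $\D$, so that for $A,B\in\mathfrak{P}$ one has $\vp_\xi(WA,Z\ad B)=\ip{W(A\xi)}{Z\ad(B\xi)}$ and $\vp_\xi(YA,(X\ad\mult Z\ad)B)=\ip{Y(A\xi)}{(X\ad\mult Z\ad)(B\xi)}$, with $A\xi,B\xi\in\D$.

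For the necessity, I would assume $X\in L^{\rm s}(Y)$ and invoke Theorem \ref{thm_stronquasistrong}, which yields $X\in L^{\w}(Y)$ together with (ii$_1$) and (ii$_2$). Setting $W=X\mult Y$, the first form identity read at the vectors $A\xi,B\xi\in\D$ is precisely (ii$_1$); and, using $W\ad=Y\ad\mult X\ad$, the second identity read at $A\xi,B\xi$ is precisely (ii$_2$). Thus both identities hold for all $\xi\in\D$ and $A,B\in\mathfrak{P}$, simply by specializing the conditions of the theorem to those particular vectors.

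For the sufficiency, suppose some $W\in\LDH$ satisfies the two form identities. Following the recipe of Proposition \ref{3.2}, I would put $A=B=I_\D$ (legitimate since $I_\D\in\mathfrak{P}$) in the first identity to get $\ip{W\xi}{Z\ad\xi}=\ip{Y\xi}{(X\ad\mult Z\ad)\xi}$ for all $\xi\in\D$ and $Z\in L^{\w}(X)$, and then apply the polarization identity to the sesquilinear form $(\xi,\eta)\mapsto\ip{W\xi}{Z\ad\eta}-\ip{Y\xi}{(X\ad\mult Z\ad)\eta}$ to upgrade this to
\[
\ip{W\xi}{Z\ad\eta}=\ip{Y\xi}{(X\ad\mult Z\ad)\eta},\qquad \forall\,\xi,\eta\in\D,\ Z\in L^{\w}(X).
\]
Choosing the admissible multiplier $Z=I_\D\in L^{\w}(X)$ (so that $X\ad\mult Z\ad=X\ad$) reduces this to $\ip{W\xi}{\eta}=\ip{Y\xi}{X\ad\eta}$, which by \eqref{altwp} shows that $X\in L^{\w}(Y)$ and $W=X\mult Y$. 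Feeding this identification back, the polarized first identity becomes exactly (ii$_1$). Treating the second form identity in the same way ($A=B=I_\D$ and polarization) and inserting $W\ad=Y\ad\mult X\ad$ produces exactly (ii$_2$). With (ii$_1$) and (ii$_2$) established, Theorem \ref{thm_stronquasistrong} gives $X\in L^{\rm s}(Y)$, i.e. $X\circ Y$ is well-defined.

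The only point requiring genuine care — the main obstacle, though a mild one — is the bookkeeping of the involution: $W$ must first be identified with $X\mult Y$ from the first identity before the second identity can be recognized as (ii$_2$), since the latter is phrased through $W\ad=Y\ad\mult X\ad$ rather than $W$ itself. Everything else is the polarization-plus-specialization argument already used for Proposition \ref{3.2}, now carried out in the presence of the extra free multipliers $Z\in L^{\w}(X)$ and $V\in R^{\w}(Y)$ supplied by Theorem \ref{thm_stronquasistrong}.
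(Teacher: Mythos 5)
Your proof is correct and takes essentially the same approach the paper intends: the paper disposes of this proposition by remarking that it ``can be proved in a similar way'' to Propositions \ref{3.2} and \ref{3.3}, i.e., by the specialization $A=B=I_\D$ plus polarization, combined with Theorem \ref{thm_stronquasistrong}, whose conditions (ii$_1$) and (ii$_2$) are exactly the two form identities — precisely your argument. Your careful identification $W=X\mult Y$ (hence $W\ad=Y\ad\mult X\ad$) before recognizing the second identity as (ii$_2$) is the right bookkeeping and matches the intended reduction.
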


{\begin{prop} The strong multiplication $X\circ Y$ of two elements $X, Y \in \LDH$ is well-defined if and only if there exist $W \in \LDH$ and a net $\{C_\alpha\}$
of elements of  $\mathfrak{P}$ such that $C_\alpha \stackrel{{\sf t}_s}{\to} Y$ and
$$
 \vp_\xi((X\mult C_\alpha -W)A, Z\ad B)\to 0,\; \mbox{ if } Z\in L^{\w}(X), \forall \,\xi \in \D,\, A,B \in \mathfrak{P},
$$
$$
 \vp_\xi((C_\alpha\ad\mult X\ad -W\ad)A, V B)\to 0,\; \mbox{ if } V\in R^{\w}(Y), \forall \,\xi \in \D,\, A,B \in \mathfrak{P}.
$$
\end{prop}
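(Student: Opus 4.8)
The plan is to deduce the statement from the \emph{preceding} proposition (the strong-multiplication analogue of Proposition~\ref{3.2}): I will show that a net $\{C_\alpha\}\subset\mathfrak{P}$ with $C_\alpha\stackrel{{\sf t}_s}{\to}Y$ and the two displayed limit properties exists \emph{iff} there is $W\in\LDH$ satisfying the two form-equalities of that proposition; the latter is equivalent to $X\circ Y$ being well-defined. Both equivalences will rest on an \emph{exact} identity that rewrites each sandwiched form with the running element $C_\alpha$ displayed, followed by a limit passage using ${\sf t}_s$-continuity. I will use freely that $\mathfrak{P}$ is ${\sf t}_{s^\ast}$-dense, so the required nets exist.

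For the first property, the device is the identity
\[
\vp_\xi\big((X\mult C)A,\,Z\ad B\big)=\vp_\xi\big(CA,\,(X\ad\mult Z\ad)B\big),\qquad C\in\mathfrak{P},\ Z\in L^{\w}(X),
\]
for all $A,B\in\mathfrak{P}$, $\xi\in\D$. First I would prove it by writing the left side as $\ip{X(CA\xi)}{Z\ad(B\xi)}$ (legitimate since $CA\xi\in\D$) and sliding $X$, then $Z\mult X$, onto the second entry via $Z\in L^{\w}(X)$ and $(Z\mult X)\ad=X\ad\mult Z\ad$; this leaves $C$ acting on the core vector $A\xi$. As the right side is $\ip{C(A\xi)}{(X\ad\mult Z\ad)(B\xi)}$, it is linear and ${\sf t}_s$-continuous in $C$, so $C_\alpha\stackrel{{\sf t}_s}{\to}Y$ forces $\vp_\xi((X\mult C_\alpha)A,Z\ad B)\to\vp_\xi(YA,(X\ad\mult Z\ad)B)$. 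Hence the first limit property holds (for one, equivalently every, such net) precisely when $\vp_\xi(WA,Z\ad B)=\vp_\xi(YA,(X\ad\mult Z\ad)B)$, i.e.\ the first form-equality.

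For the second property there is the parallel identity, obtained this time by moving the $\mathfrak{P}$-factor across through the invariance (ips$_3$) of $\vp_\xi$,
\[
\vp_\xi\big((C\ad\mult X\ad)A,\,VB\big)=\vp_\xi\big(X\ad A,\,(C\mult V)B\big)=\ip{X\ad(A\xi)}{C\,(VB\xi)},\qquad C\in\mathfrak{P},\ V\in R^{\w}(Y).
\]
\textbf{The hard part will be exactly here.} In contrast with the first identity, $C$ now enters as a \emph{left} multiplier and therefore acts on $V(B\xi)$, which in general lies \emph{outside} $\D$; thus plain ${\sf t}_s$-convergence $C_\alpha\to Y$ does not control $\lim_\alpha\ip{X\ad A\xi}{C_\alpha(VB\xi)}$. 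I would attack this on two fronts. First, the involution symmetry of the strong product, $X\in L^{\rm s}(Y)\Leftrightarrow Y\ad\in L^{\rm s}(X\ad)$ (Theorem~\ref{thm_stronquasistrong}), shows that the second form-equality for $(X,Y)$ is the \emph{first} form-equality for $(Y\ad,X\ad)$ (with $V=Z\ad$, $Z\in L^{\w}(Y\ad)$), so the static side is already covered. Second, choosing the net to converge in the finer topology ${\sf t}_{s^\ast}$ (possible by ${\sf t}_{s^\ast}$-density of $\mathfrak{P}$) makes $C_\alpha\ad\to Y\ad$ available, whence $\ip{C_\alpha(VB\xi)}{w}=\ip{VB\xi}{C_\alpha\ad w}\to\ip{VB\xi}{Y\ad w}=\ip{(Y\mult V)(B\xi)}{w}$ for every $w\in\D$; I would then build the net from regularizations of the bounded operators $S(\cdot)$ of Lemma~\ref{lemma}, so that it approximates not merely $Y$ on $\D$ but the relevant closed extension on $V(B\xi)$, thereby upgrading this weak-on-$\D$ convergence to the pairing against $X\ad(A\xi)$. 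This upgrade is the crux.

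Finally I would assemble the pieces: the two limit properties hold for some net iff both form-equalities hold, which by the preceding proposition is equivalent to $X\circ Y$ being well-defined. I would close by noting that, apart from the adjoint term just discussed, every computation is the verbatim counterpart of those in Propositions~\ref{3.2} and~\ref{3.3}.
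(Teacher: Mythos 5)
Your reduction of the statement to the preceding proposition, and your treatment of the first displayed condition, are correct, and they are indeed the ``similar way'' the paper has in mind (the paper gives no proof, only the remark that the statement is the analogue of Propositions \ref{3.2} and \ref{3.3}). But the proof remains incomplete, and the hole sits exactly where you flag it; flagging it is not filling it. For the ``only if'' direction you must actually \emph{produce} a net $\{C_\alpha\}\subset\mathfrak{P}$ for which
$$
\vp_\xi\big((C_\alpha\ad\mult X\ad)A,VB\big)=\ip{X\ad A\xi}{\overline{C_\alpha}\,VB\xi}\;\longrightarrow\;\vp_\xi\big(X\ad A,(Y\mult V)B\big),\qquad V\in R^{\w}(Y).
$$
Since $VB\xi\notin\D$ in general, and the norms $\|\overline{C_\alpha}\|$ are necessarily unbounded when $Y$ is unbounded, neither ${\sf t}_s$- nor ${\sf t}_{s^\ast}$-convergence of $C_\alpha$ to $Y$ gives any control of the left-hand side; the ``regularizations of $S(\cdot)$'' that are supposed to supply this control are never constructed, and it is not explained how such operators could simultaneously lie in $\mathfrak{P}$ (hence map $\D$ into $\D$, whereas $S(Y\ad)\D\not\subset\D$) and converge at the fixed vectors $VB\xi$ outside $\D$. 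Your ``first front'' (the involution symmetry from Theorem \ref{thm_stronquasistrong}) does not help here: it only rewrites the static form-equality, while the difficulty is the limit passage, which it does not touch.

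The ``if'' direction is in worse shape, because there your fixes are unavailable even in principle: you are \emph{handed} an arbitrary net satisfying the two limits and cannot choose it. Your assembly step claims that the second limit property for some net is equivalent to the second form-equality, but the hypothesis only says that $\ip{X\ad A\xi}{\overline{C_\alpha}VB\xi}$ converges to the $W\ad$-expression; identifying that limit with $\vp_\xi(X\ad A,(Y\mult V)B)$ is precisely the missing step. The identification does work when $V\in\mathfrak{P}$ (then $VB\xi\in\D$ and ${\sf t}_s$-convergence suffices), but that only yields $X\ad\xi\in D(Y\ha)$, i.e.\ weak-multiplication information; the strong condition $X\ad\xi\in D(\overline{Y\ad})$ requires the equality for right multipliers such as $V=S(Y\ad)$, whose range is a core for $Y\das$ by Lemma \ref{lemma} but is not contained in $\D$ --- exactly the case your argument cannot reach. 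So both implications are proved only half-way: the parts that mirror Propositions \ref{3.2} and \ref{3.3} verbatim are fine, and everything that distinguishes the strong multiplication from the weak one is left as the unproved ``upgrade''.
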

}

\bigskip The family $\M=\{\vp_\xi;\, \xi \in \D\}$ plays  an important role in the preceding discussion. Even though the elements of $\M$ do not exhaust the family of all strongly continuous ips-forms on $\LDH$, it is not restrictive to confine the analysis to them, since every ${\sf t}_s$-continuous ips-form on $\LDH$ is a linear combination of elements of $\M$. Indeed:

\betheo Let $\MM$ be a partial O*-algebra on $\D$, $\MM_0$ a *-algebra of bounded operators contained in $\MM$ and strongly* dense in $\MM$.
\begin{itemize}
\item[(i)] Then every strongly continuous invariant positive sesquilinear form $\vp$ on $\D \times \D$, with core  $\MM_0$, can be represented as
\begin{equation}\label{eq_repr} \vp(X,Y)= \sum_{i=1}^n \ip{S_iX\xi_i}{S_iY\xi_i}, \quad X,Y \in \MM, \end{equation}
 for some vectors $\xi_1, \ldots, \xi_n$ of $\D$ and positive operators $S_1, \ldots, S_n$ such that $S_1^2, \ldots, S_n^2 \in \MM'_{{\rm q}\w}$.

\item[(ii)] If $\MM=\LDH$ and $\MM_0= \mathfrak{P}$, then
$$ \vp(X,Y)= \sum_{i=1}^n \ip{X\xi_i}{Y\xi_i}, \quad X,Y \in \MM,
$$
for some vectors $\xi_1, \ldots, \xi_n$ of $\D$.
 \end{itemize}
\entheo
\begin{proof}
The strong continuity of $\vp$ implies that there exists vectors $\xi_1, \ldots, \xi_n\in \D$ such that
$$ |\vp(X,Y)| \leq \sum_{i=1}^n p_{\xi_i}(X)\cdot \sum_{i=1}^n p_{\xi_i}(Y).$$

Let $\H\ssp:= \oplus_{i=1}^n \H$, the direct sum of $n$ copies of $\H$. We will write $\oplus\xi_i$ instead of $(\xi_1, \ldots, \xi_n)$, $\xi_i \in \H$.
Let $\D\ssp= \oplus_{i=1}^n\D$.\\

We define a *-representation $\pi$ of $\MM$ into $\LDHO{\D\ssp}{\H\ssp}$ by
$$
\pi(X) (\oplus\eta_i) = \oplus X\eta_i, \quad \eta_i \in \D, i=1,\ldots, n.
$$
Let us consider the following subspaces of $\oplus_{i=1}^n \H$:
$$
\E= \{ \pi(X)(\oplus\xi_i); X \in \MM\}, \quad \E_0= \{ \pi(A)(\oplus\xi_i); A \in \MM_0\}.
$$
The strong *-density of $\MM_0$ implies that $\overline{\E_0}= \overline{\E}$.

Define
$$
\Theta (\pi(X)(\oplus\xi_i), \pi(Y)(\oplus\xi_i)):= \vp(X,Y).
$$
The sesquilinear form $\Theta $ is bounded on $\E\times \E$ and extends to $\overline{\E}\times \overline{\E}$.
Then there exists a positive bounded operator $T$ in the Hilbert space $\overline{\E}$ such that
$$
 \Theta (\pi(X)(\oplus\xi_i), \pi(Y)(\oplus\xi_i))= \ip{T(\oplus X\xi_i)}{\oplus Y\xi_i}.
$$

The condition $\vp(X\mult A, B)= \vp(A, X\ad \mult B)$ implies the equality:

$$\ip{T\pi(X\mult A)\oplus\xi_i}{\pi(B)\oplus\xi_i}=\ip{T\pi(A)\oplus\xi_i}{\pi(X\ad\mult B)\oplus\xi_i},$$
or,
\begin{equation}\label{comm} \ip{T(\pi(X)\mult\pi( A))\oplus\xi_i}{\pi(B)\oplus\xi_i}=\ip{T\pi(A)\oplus\xi_i}{(\pi(X\ad)\mult\pi( B))\oplus\xi_i}.\end{equation}
Now, for every $X \in \MM$, we define an operator $\pi_\E$ on $\E_0$ by
$$ \pi_\E(X) (\pi(A)\oplus\xi_i):= (\pi(X)\mult \pi(A))\oplus\xi_i, \quad A \in \MM_0.$$
It is easily seen that $\pi_\E(X) \in \LDHO{\E_0}{\overline{\E}}$. With this notation, \eqref{comm} reads as follows
$$ \ip{T\pi_\E(X)(\pi( A)\oplus\xi_i)}{\pi(B)\oplus\xi_i}=\ip{T\pi(A)\oplus\xi_i}{(\pi_\E(X\ad)(\pi( B)\oplus\xi_i}.$$
Hence $T\in {\pi_\E(\MM)}'_{\w}$.

Now we extend $T$ to a bounded operator $T\ssp$ on $\H\ssp$ by putting it to be $0$ on the orthogonal complement of $\overline{\E}$.

Now we prove that $T\ssp\in \pi(\MM)'_{\w}$. Recalling that $\pi(\MM_0)$ is a *-algebra of bounded operators, we begin with showing that $T\ssp \in \overline{\pi(\MM_0)}'$ (the ordinary commutant of bounded operators). Let $P_\E$ denote the projection of $\H\ssp$ onto $\overline{\E}$. Since every $\overline{\pi(A)}$, $A \in \MM_0$, leaves $\overline{\E}$ invariant, it follows $\overline{\pi(A)}P_\E=P_\E\overline{\pi(A)}$, for every $A\in \MM_0$. Moreover, if $\oplus\eta_i\in \H\ssp$, there exists a sequence $\{B_n\}$ of elements of $\MM_0$ such that $P_\E\oplus\eta_i =\lim_{n\to\infty} \pi(B_n)\oplus\xi_i$. From these facts, we get
\begin{eqnarray*}T\ssp \overline{\pi(A)}P_\E \oplus\eta_i &=& T\ssp \overline{\pi(A)}( \lim_{n\to\infty} \pi(B_n)\oplus\xi_i)\\ &=& \lim_{n\to\infty}T\ssp \overline{\pi(A)}\pi(B_n)\oplus\xi_i\\
&=& \lim_{n\to\infty}\overline{\pi(A)}T\ssp \pi(B_n)\oplus\xi_i \\
&=& \overline{\pi(A)}T\ssp P_\E \oplus\eta_i.
\end{eqnarray*}
Moreover, by the definition of $T\ssp$ we obtain $T\ssp \overline{\pi(A)}(I-P_\E) \oplus\eta_i= T\ssp(I-P_\E)\overline{\pi(A)} \oplus\eta_i=0$ and thus $T\ssp \in \overline{\pi(\MM_0)}'$.
Since $\overline{\MM_0}^{s^*} \supseteq \MM$, it follows that $\pi(\MM)'_{\w} = \pi(\MM_0)'_{\w} = \overline{\pi(\MM_0)}'$ and we finally conclude that $T\ssp\in \pi(\MM)'_{\w}$.

On the other hand, the condition $\vp(X\ad\mult A, Y B)= \vp(A, (X\mult Y) \mult B)$, whenever $X\mult Y$ is defined, implies, in similar way, that $T\ssp\in \pi(\MM)'_{{\rm q}\w}$.
 Let now $T_i$ denote the projection of $T$ onto the subspace generated by $X\xi_i$, $X \in \MM$ and then extended to $\H$ by defining it as $0$ on the orthogonal complement.
It is easily seen that $T\ssp \in \pi(\MM)'_{{\rm q}\w}$ if, and only if $T_i \in \MM'_{{\rm q}\w}$ for each $i$.

Hence,
$$
\vp(X,Y)= \sum_{i=1}^n{ \ip{T_i X\xi_i}{Y\xi_i}},\qquad \xi_i \in \D, T_i \in \MM'_{{\rm q}\w}\,.
$$
If we put $S_i=T_i^{1/2}$, then we get the representation \eqref{eq_repr}.
If $\MM=\LDH$, then (ii) follows from the equality $\LDH'_{{\rm q}\w}= {\mb C}I$.
\end{proof}

With a similar proof, one also gets
\betheo Let $\MM$ be a partial O*-algebra on $\D$. Every strongly continuous linear functional $\Phi$ can be represented as
$$ \Phi (X) = \sum_{i=1}^n \ip{X\xi_i}{\eta_i}, \quad X \in \MM$$
with $\xi_1, \ldots \xi_n \in \D$ and $\eta_1, \ldots \eta_n \in \H$.
\entheo

In \cite{antratsc} we gave the following definition of a partial GC*-algebra of operators.

\bedefi A partial O*-algebra $\MM$ on $\D$ is called a \emph{partial GC*-algebra of operators} over $\MM_0$
if\begin{itemize}
    \item[(i)] $\MM$ is ${\sf t}_{s^\ast}$-closed;
    \item[(ii)] $\MM$ contains a ${\sf t}_{s^\ast}$-dense *-algebra $\MM_0$ of bounded operators on $\D$;
    \item[(iii)]$\MM_{\sf lb}= \MM \cap \LBDH=:\MM_b$ is a
    C*-algebra.
    \end{itemize}
\findefi
\berem \label{rem_311b} Every partial GC*-algebra of operators is topologically regular. Indeed, the argument used in Remark \ref{rem_topreg} can be easily adapted to the present situation. Hence, every partial GC*-algebra of operators is a partial GC*-algebra in the sense of Section \ref{sect_preliminaries}.

\enrem
\medskip

Clearly $\LDH$ fulfills this definition if $\MM_0=\mathfrak{P}$. So it is natural to consider under which conditions
a locally convex partial *-algebra $\A[\tau]$ can be represented into a partial GC*-algebra of operators. Some results in this direction were given in \cite{antratsc}, but a deeper analysis shows that the conditions given there were sometimes unnecessarily strong. The crucial point for the existence of a nice *-representation of $\A[\tau]$ is that it possesses a sufficient family of ips-forms as $\LDH$ itself does. This will be the starting point of the present discussion.

\section{Sufficient families of ips-forms, $\M$-bounded elements}
\label{sec:Mbdd}

\bedefi Let $\A$ be a  partial *-algebra endowed with a locally convex topology $\tau$, generated by a directed set of seminorms
$\{p_\alpha\}_{\alpha \in I}$.
We say that $\A[\tau]$ is \emph{$\Ao$-regular} if there exists a *-algebra $\Ao \subset R\A$  with the following properties:
\begin{itemize}
\item[({\sf d}$_1$)]
$\Ao$ is $\tau$-dense in $\A$;
\item[({\sf d}$_2$)]
for every $b\in\Ao$, the maps $x\mapsto xb$ and $x\mapsto bx, \, x\in \A$, \mbox{are continuous.}
\end{itemize}
\findefi
\berem We warn the reader that an $\Ao$-regular partial *-algebra $\A[\tau]$ is not necessarily a locally convex partial *-algebra in the sense of \cite{ait_book}, since the definition of the latter requires stronger conditions (for instance, the continuity of the involution and of the multiplication $x \mapsto xb$ for every fixed $b\in R\A$).
\enrem

Let now $\mathcal M$ be a family of positive sesquilinear forms on $\A\times\A$ for which the conditions ({\sf ips}$_1$), ({\sf ips}$_3$) and ({\sf ips}$_4$) are satisfied with respect to $\Ao$ and such that every
$\varphi \in {\mc M} $ is $\tau$-continuous, i.e., there exists $p_\alpha$, $\gamma>0$ such that:
$$
|\varphi(x,y)|\leq \gamma \,p_\alpha(x)p_\alpha(y).
$$
Then ({\sf ips}$_2$) is also satisfied and, therefore,
$\Ao$ is a core for every $\varphi \in {\mc M}$, so that every $\varphi\in {\mc M}$ is an ips-form.

As announced above, the crucial condition is that  $\A$ possesses  sufficiently many  ips-forms. Hence, as in \cite{antratsc}, we  introduce
\bedefi
A   family $\mathcal M$ of ips-forms on $\A\times\A$ with the above properties is {\it sufficient}
 if $x\in \A$ and $\vp(x,x)=0$ for every $\vp \in {\mc M}$ imply $x=0$.
\findefi
This condition is not empty, as the following examples show.
Take $L^p[0,1]$: for $1\leq p<2$ the family is trivial, so is not sufficient. For $p\geq
2$, the family is sufficient. In the example $L^p[0,1]\oplus L^r[0,1]$ for $1\leq p< 2$ and $r\geq 2$, the family of forms is neither
sufficient, nor trivial.

Of course, if the family $\M$ is sufficient, any larger family $\M' \supset \M$ is also sufficient. The maximal sufficient family is obviously the set $\P_{\Ao}(\A)$ of \emph{all} continuous ips-forms with core $\Ao$, but we prefer to use the present notion, since it provides more flexibility.

When $\A$ possesses a sufficient family  $\mathcal M$ of ips-forms, we can
define an {\em extension} of the multiplication in the following way.

We say that the {\it weak} multiplication $x\wmult y$ is well-defined if there exists $z\in\A$ such that:
$$
\varphi(ya,x^*b)=\varphi(za,b),\;\forall\, a,b\in\Ao, \forall\,\varphi\in\mathcal M.
$$
In this case, we put $x\wmult y:=z$.

The following result is immediate.
\begin{prop}
If the partial *-algebra $\A$  possesses a sufficient family  $\mathcal M$ of ips-forms, then
$\A$  is also a  partial *-algebra with respect to the weak multiplication.
\end{prop}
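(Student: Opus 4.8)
My plan is to check directly that the weak multiplication $\wmult$ turns $(\A,{}^{*})$ into a \pa, the vector-space structure and the conjugate-linear involution being already inherited from $\A$. Three things must be verified: (i) the element $z$ in the defining relation is unique, so that $\wmult$ is a genuine partial map; (ii) $\wmult$ is distributive, i.e.\ bilinear on its domain of definition; and (iii) the involution law holds, that is, $x\wmult y$ is defined if and only if $y^{*}\wmult x^{*}$ is, and in that case $(x\wmult y)^{*}=y^{*}\wmult x^{*}$.

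For (i) --- the place where sufficiency is indispensable --- suppose $z$ and $z'$ both satisfy the defining identity of $x\wmult y$ and set $w=z-z'$. Subtracting gives $\varphi(wa,b)=0$ for all $a,b\in\Ao$ and all $\varphi\in\M$. Putting $a=e$ (the unit, which lies in $\Ao$) yields $\varphi(w,b)=\ip{\lambda_\varphi(w)}{\lambda_\varphi(b)}=0$ for every $b\in\Ao$; since $\lambda_\varphi(\Ao)$ is dense in $\H_\varphi$ (the core property, valid here because $\Ao$ is a core for every $\varphi\in\M$), this forces $\lambda_\varphi(w)=0$, i.e.\ $\varphi(w,w)=0$, for every $\varphi\in\M$. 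Sufficiency of $\M$ then gives $w=0$, so $z=z'$.

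Property (ii) is routine and uses only that each $\varphi$ is linear in its first entry and conjugate-linear in its second, together with distributivity of the given product against $a\in\Ao\subset R\A$. If $x_1\wmult y=z_1$ and $x_2\wmult y=z_2$, then for $x=x_1+\lambda x_2$ one has $x^{*}b=x_1^{*}b+\bar\lambda\,x_2^{*}b$, and conjugate-linearity in the second slot gives $\varphi(ya,x^{*}b)=\varphi(ya,x_1^{*}b)+\lambda\,\varphi(ya,x_2^{*}b)=\varphi((z_1+\lambda z_2)a,b)$, whence $x\wmult y=z_1+\lambda z_2$. Linearity in the second variable follows in the same way, now from linearity of $\varphi$ in the first slot and $(y_1+\lambda y_2)a=y_1a+\lambda y_2a$.

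The substantive step is (iii), and I expect it to be the main obstacle, since it is the one that genuinely requires the invariance condition ({\sf ips}$_3$). Starting from $x\wmult y=z$, i.e.\ $\varphi(ya,x^{*}b)=\varphi(za,b)$, I would first apply the Hermitian symmetry $\varphi(u,v)=\overline{\varphi(v,u)}$ and rename $a\leftrightarrow b$ to get $\varphi(x^{*}a,yb)=\varphi(a,zb)$. Independently, ({\sf ips}$_3$) applied with the multiplier $z\in\A$ and core elements $b,a\in\Ao$ gives $\varphi(zb,a)=\varphi(b,z^{*}a)$, and taking conjugates turns this into $\varphi(a,zb)=\varphi(z^{*}a,b)$. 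Combining the two identities produces $\varphi(x^{*}a,yb)=\varphi(z^{*}a,b)$ for all $a,b\in\Ao$ and $\varphi\in\M$, which is exactly the defining relation for $y^{*}\wmult x^{*}=z^{*}$. Hence $y^{*}\wmult x^{*}$ is defined and equals $(x\wmult y)^{*}$; running the same argument on the pair $(y^{*},x^{*})$ gives the reverse implication and thus the required symmetry of the domain. The only delicate point is this manoeuvre of transporting the factor $z$ across the form via ({\sf ips}$_3$); everything else is bookkeeping.
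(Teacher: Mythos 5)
Your proof is correct and amounts to the direct verification that the paper itself omits (the proposition is stated there as ``immediate''): uniqueness of the product via sufficiency plus density of $\lambda_\varphi(\Ao)$ in $\H_\varphi$, bilinearity via sesquilinearity of the forms and distributivity against $\Ao\subset R\A$, and the involution law via Hermitian symmetry combined with ({\sf ips}$_3$). One minor remark: putting $a=e$ presupposes $e\in\Ao$, which the paper also assumes implicitly (e.g.\ in the proof of Theorem \ref{thm_417}, where $a=e$ is used with $a$ ranging over $\Ao$), so your argument is consistent with its conventions.
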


\medskip
From now on we will consider only the case where $\A$ possesses a sufficient family $\M$ of ips-forms.

\medskip

\berem
The sesquilinear forms of $\mathcal M$ define the  topologies   generated by the following families of seminorms:
\begin{itemize}
\item[$\tau_w^{\scriptscriptstyle\M} $:]
$\quad x\mapsto |\varphi(x a,b)|$, $\quad\varphi\in\mathcal M, a,b\in\Ao$;
\item[$\tau_s^{\scriptscriptstyle\M} $:]
$\quad x\mapsto \varphi(x,x)^{1/2}$, $\quad\varphi\in\mathcal M$;
\item[$\tau_{s^*}^{\scriptscriptstyle\M} $:]
$\quad x\mapsto \max\{\varphi(x,x)^{1/2},\varphi(x^*,x^*)^{1/2}\}, \quad\varphi\in\mathcal M$
\end{itemize}
From the continuity of $\varphi\in\mathcal M$ it follows that all
the topologies $\tau^{\scriptscriptstyle\M} _w$, $\tau^{\scriptscriptstyle\M} _s$, (and also $\tau^{\scriptscriptstyle\M} _{s^*}$, if the
involution is $\tau$-continuous) are coarser than the initial
topology $\tau$. \enrem

We have the following
\begin{prop}\label{prop_43}
The weak product $x\wmult y$ is defined if, and only if, there exists a net $\{b_\alpha\}$ in $\Ao$ such that $b_\alpha\stackrel{\tau}\longrightarrow y$
and $x b_\alpha\stackrel{\tau^{\scriptscriptstyle\M} _w}\longrightarrow z\in\A$
\end{prop}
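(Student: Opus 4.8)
The plan is to mirror the proof of Proposition \ref{3.3}, transporting the argument into the GNS space attached to each form of $\M$. Everything reduces to one compatibility identity between the \emph{ordinary} product and the forms: for every $x\in\A$, every $b\in\Ao$ (so that $xb$ is defined, since $\Ao\subset R\A$) and all $a,c\in\Ao$,
\begin{equation}\label{starid}
\varphi((xb)a,c)=\varphi(ba,x\ha c),\qquad \forall\,\varphi\in\M .
\end{equation}
In other words, whenever $b\in\Ao$ the weak product $x\wmult b$ is defined and coincides with the ordinary product $xb$. Once \eqref{starid} is available, both implications follow by routine density-and-continuity bookkeeping.

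First I would establish \eqref{starid}, which I expect to be the genuinely delicate point: an $\Ao$-regular partial *-algebra need not be associative, so $(xb)a$ cannot in general be reassociated as $x(ba)$, and a direct manipulation of the axioms ({\sf ips}$_3$)--({\sf ips}$_4$) stalls precisely at this missing associativity. To bypass it I would pass to the GNS triple $(\pi_\varphi,\lambda_\varphi,\H_\varphi)$ of $\varphi$. Using $\varphi(u,v)=\ip{\lambda_\varphi(u)}{\lambda_\varphi(v)}$, the intertwining $\lambda_\varphi(cu)=\pi_\varphi(c)\lambda_\varphi(u)$ for $u\in\Ao$, the relation $\pi_\varphi(x)\ad=\pi_\varphi(x\ha)$, and the representation property $\pi_\varphi(xb)=\pi_\varphi(x)\mult\pi_\varphi(b)$ (valid since $x\in L(b)$), one computes
\begin{align*}
\varphi((xb)a,c)&=\ip{(\pi_\varphi(x)\mult\pi_\varphi(b))\lambda_\varphi(a)}{\lambda_\varphi(c)}
=\ip{\pi_\varphi(b)\lambda_\varphi(a)}{\pi_\varphi(x)\ad\lambda_\varphi(c)}\\
&=\ip{\lambda_\varphi(ba)}{\lambda_\varphi(x\ha c)}=\varphi(ba,x\ha c),
\end{align*}
the middle equality being \eqref{altwp} read inside the representation $\pi_\varphi$. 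Thus the non-associativity of $\A$ is absorbed into the (associative) composition of operators on $\H_\varphi$, and \eqref{starid} holds.

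For the direct implication, assume $x\wmult y$ is defined and equals $z$, i.e. $\varphi(ya,x\ha c)=\varphi(za,c)$ for all $\varphi\in\M$ and $a,c\in\Ao$. By ({\sf d}$_1$) I pick a net $\{b_\alpha\}\subset\Ao$ with $b_\alpha\stackrel{\tau}{\to}y$. For fixed $a\in\Ao$, ({\sf d}$_2$) gives $b_\alpha a\stackrel{\tau}{\to}ya$, and the $\tau$-continuity of $\varphi$ in its first slot yields $\varphi(b_\alpha a,x\ha c)\to\varphi(ya,x\ha c)=\varphi(za,c)$. Rewriting the left-hand side by \eqref{starid} (with $b=b_\alpha$) gives $\varphi((xb_\alpha)a,c)\to\varphi(za,c)$ for all $\varphi\in\M$ and $a,c\in\Ao$, that is, $xb_\alpha\stackrel{\tau_w^{\scriptscriptstyle\M}}{\to}z$, as required.

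Conversely, given a net $\{b_\alpha\}\subset\Ao$ with $b_\alpha\stackrel{\tau}{\to}y$ and $xb_\alpha\stackrel{\tau_w^{\scriptscriptstyle\M}}{\to}z$, the latter convergence means $\varphi((xb_\alpha)a,c)\to\varphi(za,c)$ for all $\varphi\in\M$ and $a,c\in\Ao$. Replacing the left-hand side by $\varphi(b_\alpha a,x\ha c)$ via \eqref{starid} and passing to the limit exactly as above (using ({\sf d}$_2$) and the continuity of $\varphi$) gives $\varphi(ya,x\ha c)=\varphi(za,c)$ for all $\varphi\in\M$ and $a,c\in\Ao$, which is precisely the statement that $x\wmult y$ is well-defined with $x\wmult y=z$. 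The only step requiring real care is \eqref{starid}; the remaining two implications are then the same density/continuity argument used for Proposition \ref{3.3}.
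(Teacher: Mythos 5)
Your proof is correct, and its skeleton --- pick a net in $\Ao$ converging $\tau$ to $y$, transfer everything through the compatibility identity $\varphi((xb)a,c)=\varphi(ba,x\ha c)$, and pass to the limit using ({\sf d}$_2$) and the $\tau$-continuity of each $\varphi\in\M$ --- is exactly the paper's proof. The only place you diverge is in how you justify that identity, and there you have misdiagnosed the difficulty: it is not true that a direct manipulation of the axioms stalls on the missing associativity. Axiom ({\sf ips}$_4$) is formulated precisely so that no reassociation is ever needed: it reads $\varphi(a\ha u, bv)=\varphi(u,(ab)v)$ for $a\in L(b)$ and $u,v$ in the core, and both sides are well-defined partial products because $u,v\in\Ao\subset R\A$. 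Taking $a=x$ (legitimate, since $b\in\Ao\subset R\A$ makes every element of $\A$ a left multiplier of $b$) and then conjugating, via the Hermitian symmetry of $\varphi$, yields $\varphi((xb)v,u)=\varphi(bv,x\ha u)$, which is your identity on the nose; this is why the paper uses it without comment. Your GNS detour is nevertheless valid --- the intertwining relation $\lambda_\varphi(wu)=\pi_\varphi(w)\lambda_\varphi(u)$, the adjoint property, and $\pi_\varphi(xb)=\pi_\varphi(x)\mult\pi_\varphi(b)$ are standard properties of the GNS construction for ips-forms cited from the literature --- but it proves a one-line consequence of ({\sf ips}$_4$) with machinery whose own construction rests on that same axiom, so it costs extra apparatus and obscures the fact that the invariance axioms already encode exactly this compatibility between the partial product and the forms.
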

\begin{proof}
Assume that $x\wmult y$ is defined. From the $\tau$-density of $\Ao$, there exists a net $\{b_\alpha\}$ in $\Ao$ such that $b_\alpha\stackrel{\tau}\longrightarrow y$.
 Then one has, for every $c,c'\in \Ao$: $\varphi((xb_\alpha)c,c')=\varphi(b_\alpha c,x^*c')\rightarrow\varphi(yc,x^*c')=\varphi((x\mult y) c,c')$, that is,
 $x b_\alpha\stackrel{\tau^{\scriptscriptstyle\M} _w}\longrightarrow x\mult y$. Conversely, assume that there exists a net $\{b_\alpha\}$ in $\Ao$ such that
$b_\alpha\stackrel{\tau}\longrightarrow y$ and $x b_\alpha\stackrel{\tau^{\scriptscriptstyle\M} _w}\longrightarrow z\in\A$.
 Then, for every $a,a'\in \Ao$ $\varphi(ya,x^*a')=\lim_\alpha\varphi(b_\alpha a,x^*a')=\lim_\alpha\varphi((xb_\alpha)a,a')=\varphi(za,a')$, that is, $x\wmult y$ is defined.
\end{proof}

In the case of $\LDH$, the weak multiplication $\mult$ coincides with the weak multiplication defined here by means of ips-forms (Proposition \ref{3.2}). By analogy, from now on we will always suppose that the following condition holds:

\begin{itemize}
\item[{\sf (wp)}] $xy$ exists if, and only if, $x\mult y$ exists. In this case $xy=x\mult y$.
\end{itemize}
Then, of course, $L(x) = L^{\w}(x)$ and  $R(x) = R^{\w}(x)$.
\medskip

The first result is that, if $\A$ is a \pa\ with a sufficient family $\M$ of ips-forms, and satisfying {\sf (wp)}, then,
it satisfies the condition {\sf (cl)} with respect to the topology  $\tau^{\scriptscriptstyle\M}_s$.

\begin{prop}\label{prop_415}
Let $\A$ be a \pa\ with a sufficient family $\M$ of ips-forms, and satisfying {\sf (wp)}. Then,
for every $x \in \A$, the linear map $\OL_x: R(x)\to \A$ with
$\OL_x(y)=xy$, $y\in R(x)$ is closed with respect to $\tau^{\scriptscriptstyle\M} _s$, in
the sense that if $y_\alpha \stackrel{\tau^{\scriptscriptstyle\M} _s}{\to} y$, with $y_\alpha\in R(x)$
and $xy_\alpha \stackrel{\tau^{\scriptscriptstyle\M} _s}{\to} z \in \A$, then $y\in R(x)$ and
$z=xy$.
\end{prop}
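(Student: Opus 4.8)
The plan is to recast the closedness of $\OL_x$ as the existence of a single weak product and then to pass to the limit in the defining identity of that product. Under {\sf (wp)}, proving that $y\in R(x)$ and $z=xy$ is the same as proving that $x\wmult y$ is well defined and equals $z$; by the definition of the weak multiplication induced by $\M$ this is exactly the statement that
\[
\varphi(ya,x^*b)=\varphi(za,b),\qquad \forall\,a,b\in\Ao,\ \forall\,\varphi\in\M .
\]
For each index $\alpha$ we have $y_\alpha\in R(x)$, so {\sf (wp)} gives that $x\wmult y_\alpha$ is defined and equals $xy_\alpha$; written out, this is
\begin{equation}\label{starplan}
\varphi(y_\alpha a,x^*b)=\varphi\big((xy_\alpha)a,b\big),\qquad \forall\,a,b\in\Ao,\ \forall\,\varphi\in\M .
\end{equation}
The entire proof then consists in letting $\alpha$ run along the net in \eqref{starplan}.

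To estimate the two sides of \eqref{starplan} I would use the Cauchy--Schwarz inequality \eqref{2.2}. Putting $u_\alpha=y_\alpha-y$ and $v_\alpha=xy_\alpha-z$, one has, for fixed $a,b\in\Ao$ and $\varphi\in\M$,
\[
\big|\varphi(y_\alpha a,x^*b)-\varphi(ya,x^*b)\big|^2=\big|\varphi(u_\alpha a,x^*b)\big|^2\le \varphi(u_\alpha a,u_\alpha a)\,\varphi(x^*b,x^*b),
\]
\[
\big|\varphi\big((xy_\alpha)a,b\big)-\varphi(za,b)\big|^2=\big|\varphi(v_\alpha a,b)\big|^2\le \varphi(v_\alpha a,v_\alpha a)\,\varphi(b,b).
\]
Since the two rightmost factors are fixed finite numbers, the whole matter reduces to the \emph{$\tau_s^{\scriptscriptstyle\M}$-continuity of the right multiplication by $a\in\Ao$}: that $\varphi(s,s)\to0$ for every $\varphi\in\M$ forces $\varphi(sa,sa)\to0$. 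Granting this, the hypotheses $u_\alpha\to0$ and $v_\alpha\to0$ in $\tau_s^{\scriptscriptstyle\M}$ give $\varphi(u_\alpha a,u_\alpha a)\to0$ and $\varphi(v_\alpha a,v_\alpha a)\to0$, so both sides of \eqref{starplan} converge and produce the desired identity.

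The step that carries the real content, and where I expect the difficulty, is exactly this right-multiplication continuity. The natural way to get it is to observe that, for each $\varphi\in\M$ and $a\in\Ao$, the positive sesquilinear form $\varphi^{(a)}(s,t):=\varphi(sa,ta)$ is again a $\tau$-continuous ips-form with core $\Ao$: ({\sf ips}$_1$) holds because $\Ao\subset R\A$; $\tau$-continuity follows from that of $\varphi$ together with the continuity of $s\mapsto sa$ granted by ({\sf d}$_2$); and ({\sf ips}$_3$), ({\sf ips}$_4$) for $\varphi^{(a)}$ follow from those of $\varphi$, using that $\Ao$ is a $*$-algebra and the associativity relations $(cs)a=c(sa)$, $c^*(ta)=(c^*t)a$ available when the inner factors lie in $\Ao\subset R\A$. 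In the operator model this is completely transparent, since $\varphi_\xi(\,\cdot\,A,\,\cdot\,A)=\varphi_{A\xi}$ with $A\xi\in\D$. The delicate point is that $\varphi^{(a)}$ must be \emph{controlled by the chosen family} $\M$ (and not merely by the maximal family $\P_{\Ao}(\A)$) in order that $\tau_s^{\scriptscriptstyle\M}$-null nets stay $\tau_s^{\scriptscriptstyle\M}$-null after right multiplication by $a$; this is where the structure of $\M$ and the $\Ao$-semi-associativity are genuinely used.

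Granting the continuity lemma, the argument closes immediately: passing to the limit in \eqref{starplan} yields $\varphi(ya,x^*b)=\varphi(za,b)$ for all $a,b\in\Ao$ and all $\varphi\in\M$, i.e. $x\wmult y$ is well defined and equals $z$. By {\sf (wp)} this means $y\in R(x)$ and $xy=x\wmult y=z$, which is precisely the assertion that $\OL_x$ is closed with respect to $\tau_s^{\scriptscriptstyle\M}$.
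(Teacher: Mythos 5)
Your proposal follows the same route as the paper's own proof. The paper's argument is exactly your limit passage: by ({\sf ips}$_4$) (equivalently, by {\sf (wp)} together with the definition of $\wmult$, as you phrase it) one has $\vp((xy_\alpha)a,a')=\vp(y_\alpha a,x\ha a')$ for every $\vp\in\M$ and $a,a'\in\Ao$; letting $\alpha$ run, the identity $\vp(ya,x\ha a')=\vp(za,a')$ follows, and sufficiency plus {\sf (wp)} give $y\in R(x)$ and $z=xy$. There is no difference in strategy or in the decomposition of the problem.

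The point worth comparing is the one you flag. The paper performs the limit passage in a single unexplained step: it writes $\vp(y_\alpha a,x\ha a')-\vp(za,a')\to\vp(ya,x\ha a')-\vp(za,a')$, and implicitly uses $\vp((xy_\alpha-z)a,a')\to 0$, with no justification offered. Your Cauchy--Schwarz estimates show that both assertions reduce precisely to the lemma you isolate: right multiplication by $a\in\Ao$ must carry $\tau^{\scriptscriptstyle\M}_s$-null nets to $\tau^{\scriptscriptstyle\M}_s$-null nets, i.e.\ the forms $\vp_a(s,t)=\vp(sa,ta)$ must be dominated by the seminorms attached to $\M$. You are also right that this is not automatic: knowing that $\vp_a$ is again a $\tau$-continuous ips-form with core $\Ao$ only places it in the maximal family $\P_{\Ao}(\A)$, not under the control of the chosen $\M$, and the stated hypotheses (sufficiency, $\tau$-continuity of each $\vp$, {\sf (wp)}) do not visibly yield this domination. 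What is needed is in substance condition ({\sf wb}$_2$) of Definition \ref{def:wbehaved} (restricted to $a\in\Ao$), which the paper introduces only later for well-behaved families, and which holds trivially in the operator models since $\vp_\xi(\cdot A,\cdot A)=\vp_{A\xi}$ with $A\xi\in\D$. So your attempt contains everything the paper's proof contains; the step you leave as a ``continuity lemma'' is exactly the step the paper's proof asserts without argument. (Note that for the particular choice $a=e$ no lemma is needed, since $s\mapsto|\vp(s,v)|$ is $\tau^{\scriptscriptstyle\M}_s$-continuous for each fixed $v$ by Cauchy--Schwarz; the difficulty arises only for general $a\in\Ao$.)
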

\begin{proof} Let $y_\alpha \stackrel{\tau^{\scriptscriptstyle\M} _s}{\to} y$, with $y_\alpha\in R(x)$
and $xy_\alpha \stackrel{\tau^{\scriptscriptstyle\M} _s}{\to} z \in \A$. Then, again by ({\sf ips}$_4$), for every $\vp\in\M$,
\begin{align*}\vp((xy_\alpha-z)a,a')&= \vp((xy_\alpha)a,a')-\vp(za,a')
\\
&=\vp(y_\alpha a,x\ha a')-\vp(za,a')\to \vp(ya,x\ha a')-\vp(za,a') =0.
\end{align*}
Hence, since $\M$ is sufficient, $y\in R(x)$ and $z=xy$.
\end{proof}

\berem It is clear that the same statement of Proposition \ref{prop_415} holds for any topology finer than $\tau^{\scriptscriptstyle\M} _s$ and then, in particular, for the initial topology $\tau$ of $\A$.
\enrem

Now we are ready to introduce the appropriate notion of bounded elements.
\bedefi
Let $\A$ be a \pa\ with a sufficient family $\M$ of ips-forms, and satisfying {\sf (wp)}.
An element $x\in\A$ is called \emph{$\mc M$-bounded} if there exists $\gamma>0$ such that:
$$
|\vp(xa,b)|\leq\gamma\, \vp(a,a)^{1/2}\varphi(b,b)^{1/2}, \;\forall\, \vp\in\mathcal M,\, a,b\in\Ao\,.
$$
\findefi
\begin{prop}
Let $\A[\tau]$ be an $\Ao$-regular \pa\ satisfying condition {\sf (wp)}. Then, an element
 $x\in\A$ is $\M$-bounded if, and only if there exists $\gamma\in\mathbb R$ such that $\varphi(xa,xa)\leq\gamma^2\varphi(a,a)$ for all $\varphi\in \M$ and $a\in\Ao$.\
\label{boundedprop}
\end{prop}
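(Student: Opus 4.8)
The plan is to prove the two implications separately. The direction from the quadratic estimate to $\M$-boundedness is immediate: assume $\vp(xa,xa)\leq\gamma^2\vp(a,a)$ for all $\vp\in\M$ and $a\in\Ao$. Since $a\in\Ao\subset R\A$, the product $xa$ is well-defined, and the Cauchy--Schwarz inequality \eqref{2.2} applied to the pair $(xa,b)$, $b\in\Ao$, gives $|\vp(xa,b)|^2\leq\vp(xa,xa)\,\vp(b,b)\leq\gamma^2\vp(a,a)\,\vp(b,b)$; taking square roots yields precisely the defining inequality of $\M$-boundedness.

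For the converse, suppose $x$ is $\M$-bounded with constant $\gamma$, so that $|\vp(xa,b)|\leq\gamma\,\vp(a,a)^{1/2}\vp(b,b)^{1/2}$ for all $\vp\in\M$ and $a,b\in\Ao$. Fix $\vp\in\M$ and $a\in\Ao$. The obstacle is that one cannot simply substitute $b=xa$, since $xa$ need not belong to $\Ao$. This is where $\Ao$-regularity enters: by condition ({\sf d}$_1$) the algebra $\Ao$ is $\tau$-dense, so I would pick a net $\{c_\beta\}\subset\Ao$ with $c_\beta\stackrel{\tau}{\to}xa$.

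Each $\vp\in\M$ is $\tau$-continuous, and the estimate $|\vp(u,v)|\leq\gamma'p_\alpha(u)p_\alpha(v)$ upgrades this to joint continuity, so that both $\vp(xa,c_\beta)\to\vp(xa,xa)$ and $\vp(c_\beta,c_\beta)\to\vp(xa,xa)$. Passing to the limit in the $\M$-boundedness inequality taken with $b=c_\beta$ then yields $\vp(xa,xa)\leq\gamma\,\vp(a,a)^{1/2}\vp(xa,xa)^{1/2}$, where I have used that $\vp(xa,xa)\geq0$ is real. If $\vp(xa,xa)=0$ the desired bound holds trivially; otherwise I divide by $\vp(xa,xa)^{1/2}>0$ and square to obtain $\vp(xa,xa)\leq\gamma^2\vp(a,a)$. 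The only delicate step is this passage to the limit, i.e. the joint $\tau$-continuity of the forms, which together with the $\tau$-density of $\Ao$ is exactly what the hypothesis of $\Ao$-regularity supplies.
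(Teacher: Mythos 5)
Your proof is correct, and the hard direction takes a route mildly but genuinely different from the paper's. The paper approximates $x$ itself by a net $\{x_\alpha\}\subset\Ao$ with $x_\alpha\stackrel{\tau}{\to}x$, tests the $\M$-boundedness inequality against the element $x_\alpha b\in\Ao$ (using that $\Ao$ is an algebra), and passes to the limit via the continuity of the right multiplication $y\mapsto yb$, i.e.\ condition ({\sf d}$_2$) of $\Ao$-regularity; this yields the stronger intermediate estimate $|\varphi(xa,xb)|\leq\gamma\,\varphi(a,a)^{1/2}\varphi(xb,xb)^{1/2}$ for all $a,b\in\Ao$, which is then specialized at $b=a$. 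You instead approximate the product $xa$ directly by a net $\{c_\beta\}\subset\Ao$ and test against $b=c_\beta$; only the $\tau$-density ({\sf d}$_1$) and the $\tau$-continuity of the forms enter, so your argument never invokes ({\sf d}$_2$) nor the multiplicative structure of $\Ao$, and it also disposes explicitly of the case $\varphi(xa,xa)=0$, which the paper passes over in silence. What the paper's variant buys is the more general two-variable inequality (in the spirit of Proposition \ref{prop_algebra}); what yours buys is economy of hypotheses. One bookkeeping correction: the joint continuity estimate $|\varphi(u,v)|\leq\gamma'\,p_\alpha(u)p_\alpha(v)$ is not supplied by $\Ao$-regularity, as your last sentence suggests, but is the standing assumption imposed on the family $\M$ at the beginning of Section \ref{sec:Mbdd}; only the density of $\Ao$ comes from ({\sf d}$_1$). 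Since both hypotheses are in force, this misattribution does not affect the validity of your argument.
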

\begin{proof}
Assume that $x\in\A$ is $\M$-bounded. By the density of $\Ao$, there exists a net $\{x_\alpha\}\in\Ao$ such that $\tau-\lim_\alpha x_\alpha=x$.
The continuity of $\varphi$ then implies:
\begin{align*}
|\varphi(xa,xb)|&=\lim_\alpha |\varphi(xa,x_\alpha b)|\leq\gamma \,\varphi(a,a)^{1/2}\lim_\alpha \varphi(x_\alpha b,x_\alpha b)^{1/2}
\\ &=\gamma\,\varphi(a,a)^{1/2} \varphi(xb,xb)^{1/2}.
\end{align*}
In particular, it follows that
$$
\varphi(xa,xa)\leq\gamma\,\varphi(a,a)^{1/2} \varphi(xa,xa)^{1/2},
$$
that is $\varphi(xa,xa)\leq\gamma^2\varphi(a,a).$

Conversely, we have:
$$
|\varphi(xa,b)|\leq\varphi(xa,xa)^{1/2}\varphi(b,b)^{1/2}\leq\gamma\,\varphi(a,a)^{1/2}\varphi(b,b)^{1/2}
$$
\end{proof}
From the last proposition, it follows obviously that an element $x$
of $\A$ is $\mathcal M$-bounded if, and only if, $x$ is left
$\tau^{\scriptscriptstyle\M} _s$-bounded, in the sense of \cite{antratsc}. Let us  define:
\begin{align*}
q_{\scriptscriptstyle\M}(x)&:=\inf\{\gamma>0: \varphi(xa,xa)\leq\gamma^2\varphi(a,a), \,\forall\,\varphi\in\mathcal M, \,\forall\,  a\in\Ao\}
\\
&= \sup\{\varphi(xa,xa)^{1/2}: \varphi\in \mc M, \,a\in
\Ao,\, \varphi(a,a)^{1/2}=1\}
\end{align*}
Hence $q_{\scriptscriptstyle\M}$ coincides with the norm $\nlb{\cdot}$ obtained by giving $\A$ the topology $\tau_s^{\scriptscriptstyle\M} $ (see also \cite{cimpric}for a similar approach)).

Then the following holds:
\begin{prop}\label{prop_algebra} Let $x,y$ be $\M$-bounded elements of $\A$. The following statements hold:
\begin{itemize}
  \item[(i)] $x^*$ is ${\mc M}$-bounded also, and $q_{\scriptscriptstyle\M}(x)=q_{\scriptscriptstyle\M}(x^*)$;
  \item[(ii)] If $xy$ is well-defined, then $xy$ is ${\mc M}$-bounded and
  $$ q_{\scriptscriptstyle\M}(xy)\leq q_{\scriptscriptstyle\M} (x)\, q_{\scriptscriptstyle\M}(y).$$
\end{itemize}
\end{prop}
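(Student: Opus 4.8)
The plan is to reduce both assertions to elementary Hilbert-space–type estimates carried out directly at the level of the forms $\vp\in\M$, using the characterization of $\M$-boundedness from Proposition \ref{boundedprop} together with the expression of $q_{\scriptscriptstyle\M}$ as a supremum. The two structural ingredients I would exploit are the invariance property ({\sf ips}$_3$) and the Cauchy--Schwarz inequality \eqref{2.2}, both valid for each positive sesquilinear form $\vp$ on all of $\A\times\A$.

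For (i), I would first use ({\sf ips}$_3$) to move the involution across the form: for $a,b\in\Ao$ one has $\vp(x^*a,b)=\vp(a,xb)=\overline{\vp(xb,a)}$, where the outer equality is the Hermitian symmetry of $\vp$. Since $x$ is $\M$-bounded with constant $\gamma=q_{\scriptscriptstyle\M}(x)$, the modulus of the right-hand side is bounded by $\gamma\,\vp(b,b)^{1/2}\vp(a,a)^{1/2}$. This is exactly the defining inequality of $\M$-boundedness for $x^*$, so $x^*$ is $\M$-bounded with $q_{\scriptscriptstyle\M}(x^*)\le q_{\scriptscriptstyle\M}(x)$. Applying the same argument to $x^*$, whose involution is $x$, yields the reverse inequality, hence $q_{\scriptscriptstyle\M}(x)=q_{\scriptscriptstyle\M}(x^*)$.

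For (ii), the natural temptation is to write $(xy)a=x(ya)$ and estimate $\vp(x(ya),x(ya))$ factorwise; this is precisely the step I expect to be the main obstacle, because $\Ao$-regularity does not by itself grant semi-associativity, so the identity $(xy)a=x(ya)$ is not available here. I would sidestep this by working directly from the definition of the weak product: by condition {\sf (wp)} and the form-level definition of $x\wmult y$, one has $\vp((xy)a,b)=\vp(ya,x^*b)$ for all $a,b\in\Ao$ and all $\vp\in\M$. Applying Cauchy--Schwarz \eqref{2.2} gives $|\vp((xy)a,b)|\le \vp(ya,ya)^{1/2}\,\vp(x^*b,x^*b)^{1/2}$. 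I would then bound the first factor by $q_{\scriptscriptstyle\M}(y)\,\vp(a,a)^{1/2}$ using the $\M$-boundedness of $y$ (via Proposition \ref{boundedprop}), and the second factor by $q_{\scriptscriptstyle\M}(x^*)\,\vp(b,b)^{1/2}=q_{\scriptscriptstyle\M}(x)\,\vp(b,b)^{1/2}$, invoking part (i) for both the $\M$-boundedness of $x^*$ and the equality of norms. The resulting estimate $|\vp((xy)a,b)|\le q_{\scriptscriptstyle\M}(x)q_{\scriptscriptstyle\M}(y)\,\vp(a,a)^{1/2}\vp(b,b)^{1/2}$ is the definition of $\M$-boundedness of $xy$ with constant $q_{\scriptscriptstyle\M}(x)q_{\scriptscriptstyle\M}(y)$, whence $q_{\scriptscriptstyle\M}(xy)\le q_{\scriptscriptstyle\M}(x)\,q_{\scriptscriptstyle\M}(y)$, as claimed.
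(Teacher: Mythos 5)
Your proof is correct and follows essentially the same route as the paper's: part (i) via the identity $\vp(x\ha a,b)=\vp(a,xb)=\overline{\vp(xb,a)}$ coming from ({\sf ips}$_3$) and Hermitian symmetry, and part (ii) via $\vp((xy)a,b)=\vp(ya,x\ha b)$ followed by Cauchy--Schwarz, Proposition \ref{boundedprop}, and part (i). Your explicit warning that $(xy)a=x(ya)$ is unavailable, and the use of the weak-product definition together with {\sf (wp)} to get the key identity (equivalently, ({\sf ips}$_4$)), is exactly the point the paper's terse computation relies on.
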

\begin{proof} (i):
The first part is a direct consequence of the definition. The second part follows from the fact that $|\varphi(xa,b)|=|\varphi(a,x^*b)|=|\varphi(x^*b,a)|$,
by Proposition \ref{boundedprop} and by definition of $q_{\scriptscriptstyle\M}(x)$.
\begin{align*}
|\varphi((xy)a,b)|&=|\varphi(ya,x^*b)|\leq\varphi(ya,ya)^{1/2}\varphi(x^*b,x^*b)^{1/2}\\
&\leq q_{\scriptscriptstyle\M}(x)q_{\scriptscriptstyle\M}(y)\varphi(a,a)^{1/2}\gamma_2\varphi(b,b)^{1/2}.
\end{align*}
Taking the sup on the l.h.s., we get the desired inequality.
\end{proof}
\begin{prop} $q_{\scriptscriptstyle\M}$ is an unbounded $C^*$-norm on $\A$ with domain $\D(q_{\scriptscriptstyle\M}):=\{x\in\A: x~\mbox{is} ~{\mc M}\mbox{-bounded}\}$.
\end{prop}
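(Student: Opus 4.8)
The plan is to verify, in turn, the three features that make $q_{\scriptscriptstyle\M}$ an unbounded $C^*$-norm: that its domain $\D(q_{\scriptscriptstyle\M})$ is a *-invariant subspace stable under the (defined) partial products, that $q_{\scriptscriptstyle\M}$ is a genuine norm on it, and that $q_{\scriptscriptstyle\M}$ obeys the $C^*$-axioms. Most of the \emph{algebraic} content is already recorded in Proposition \ref{prop_algebra}: part (i) gives that $\D(q_{\scriptscriptstyle\M})$ is closed under the involution with $q_{\scriptscriptstyle\M}(x^*)=q_{\scriptscriptstyle\M}(x)$, and part (ii) gives that it is closed under any well-defined product $xy$, with the submultiplicativity $q_{\scriptscriptstyle\M}(xy)\le q_{\scriptscriptstyle\M}(x)q_{\scriptscriptstyle\M}(y)$. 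So what remains is the linear/metric structure and the $C^*$-identity.

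First I would settle the vector-space and seminorm properties. Homogeneity $q_{\scriptscriptstyle\M}(\lambda x)=|\lambda|\,q_{\scriptscriptstyle\M}(x)$ is immediate from $\varphi((\lambda x)a,(\lambda x)a)=|\lambda|^2\varphi(xa,xa)$. For the triangle inequality I would use distributivity of the partial product over the universal right multipliers, namely $(x+y)a=xa+ya$ for $a\in\Ao$, together with the fact that $u\mapsto \varphi(u,u)^{1/2}$ is a seminorm on $\A$, giving $\varphi((x+y)a,(x+y)a)^{1/2}\le \varphi(xa,xa)^{1/2}+\varphi(ya,ya)^{1/2}\le (q_{\scriptscriptstyle\M}(x)+q_{\scriptscriptstyle\M}(y))\,\varphi(a,a)^{1/2}$; taking the supremum over all $\varphi\in\M$ and $a\in\Ao$ with $\varphi(a,a)=1$ yields both $x+y\in\D(q_{\scriptscriptstyle\M})$ and $q_{\scriptscriptstyle\M}(x+y)\le q_{\scriptscriptstyle\M}(x)+q_{\scriptscriptstyle\M}(y)$. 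Together with part (i) of Proposition \ref{prop_algebra} this shows $\D(q_{\scriptscriptstyle\M})$ is a *-invariant subspace.

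Next comes positive definiteness, which is the one place that really uses the standing hypotheses of this section. Suppose $q_{\scriptscriptstyle\M}(x)=0$. Then $\varphi(xa,xa)=0$ for every $\varphi\in\M$ and every $a\in\Ao$; fixing $a$ and invoking the \emph{sufficiency} of $\M$ gives $xa=0$ for all $a\in\Ao$. To pass from here to $x=0$ I would approximate the unit: since $\Ao$ is $\tau$-dense, choose a net $\{a_\beta\}\subset\Ao$ with $a_\beta\stackrel{\tau}{\to}e$; as $\tau_s^{\scriptscriptstyle\M}$ is coarser than $\tau$ we also have $a_\beta\stackrel{\tau_s^{\scriptscriptstyle\M}}{\to}e$, while $xa_\beta=0\stackrel{\tau_s^{\scriptscriptstyle\M}}{\to}0$. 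Since each $a_\beta\in\Ao\subset R(x)$, the $\tau_s^{\scriptscriptstyle\M}$-closedness of $\OL_x$ (Proposition \ref{prop_415}) forces $xe=0$, that is $x=0$.

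Finally the $C^*$-identity $q_{\scriptscriptstyle\M}(x^*x)=q_{\scriptscriptstyle\M}(x)^2$ (for $x$ such that $x^*x$ is defined). One inequality is free: $q_{\scriptscriptstyle\M}(x^*x)\le q_{\scriptscriptstyle\M}(x^*)q_{\scriptscriptstyle\M}(x)=q_{\scriptscriptstyle\M}(x)^2$ by Proposition \ref{prop_algebra}. For the reverse I would unfold the definition of the weak product: $x^*\wmult x=z$ means $\varphi(xa,xb)=\varphi(za,b)$ for all $a,b\in\Ao$, and taking $b=a$ gives $\varphi(xa,xa)=\varphi((x^*x)a,a)$. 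Applying the Cauchy--Schwarz inequality \eqref{2.2} and Proposition \ref{boundedprop},
$$\varphi(xa,xa)=\varphi((x^*x)a,a)\le \varphi((x^*x)a,(x^*x)a)^{1/2}\varphi(a,a)^{1/2}\le q_{\scriptscriptstyle\M}(x^*x)\,\varphi(a,a),$$
whence $q_{\scriptscriptstyle\M}(x)^2\le q_{\scriptscriptstyle\M}(x^*x)$ after taking the supremum over $a\in\Ao$ with $\varphi(a,a)=1$. I expect the main obstacle to be precisely this $C^*$-identity---specifically, keeping straight the bookkeeping of the partial products and clarifying in what sense the identity holds (only when $x^*x$ is defined)---together with the unit-approximation step in positive definiteness, which is the only argument relying on the closedness Proposition \ref{prop_415} rather than on purely formal manipulations.
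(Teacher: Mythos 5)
Your proposal is correct. Note, though, that the paper offers no argument at all for this proposition: its entire ``proof'' is the remark that the statement ``can be deduced from \cite{tratsc} or, simply, computed directly.'' What you have written is precisely that direct computation, carried out with the right ingredients: Proposition \ref{prop_algebra} for *-invariance and submultiplicativity, distributivity over $\Ao\subset R\A$ plus the seminorm property of $u\mapsto\vp(u,u)^{1/2}$ for the linear structure, sufficiency of $\M$ together with the $\tau^{\scriptscriptstyle\M}_s$-closedness of $\OL_x$ (Proposition \ref{prop_415}) for definiteness, and Proposition \ref{boundedprop} with Cauchy--Schwarz for the nontrivial half of the C*-identity $q_{\scriptscriptstyle\M}(x\ha x)=q_{\scriptscriptstyle\M}(x)^2$ (correctly restricted to those $x\in\D(q_{\scriptscriptstyle\M})$ for which $x\ha x$ is well-defined, which is the right reading of ``unbounded C*-norm'' in the sense of \cite{tratsc}). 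Two small remarks: in the definiteness step, if one assumes $e\in\Ao$ (as in the definition of a distinguished *-algebra in Section \ref{sect_preliminaries}), taking $a=e$ gives $x=0$ in one line, so your net argument via Proposition \ref{prop_415} is only needed when $\Ao$ is not assumed to contain the unit --- it is a sound and slightly more general route; and your identity $\vp(xa,xa)=\vp((x\ha x)a,a)$ uses condition {\sf (wp)}, which is indeed a standing hypothesis of the section, so no gap there.
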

\begin{proof} This can be deduced from \cite{tratsc} or, simply, computed directly.
\end{proof}

The existence of a  sufficient family $\M$ of ips-forms allows the definition of a stronger multiplication on $\A$, that will play a role similar to the strong partial multiplication on $\LDH$.
\bedefi\label{strongprod2} If the family $\M$ of ips-forms is
sufficient, we say that the {\it strong} multiplication $x\bullet y$
is well-defined (and that $x\in L^s(y)$ or $y\in R^s(x)$) if $x\in L(y)$ and:
\begin{itemize}
\item[({\sf sm}$_1$)]$\varphi((xy)a,z^*b)=\varphi(ya,(x^*z^*)b), \; \forall \,z \in L(x),\forall\,\varphi\in{\M}, \forall\, a,b\in\A_0$;
\item[({\sf sm}$_2$)]$\varphi((y^*x^*)a,vb)=\varphi(x^*a,(yv)b), \; \forall\, v \in R(y),\forall\, \varphi\in{\M},\forall\, a,b\in\A_0.$
\end{itemize}
\findefi
The following characterization is immediate.
\begin{prop} \label{strongprod} If the family $\M$ of ips-forms is sufficient, the {\it strong} multiplication $x\bullet y$
is well-defined (and $x\in L^s(y)$ or $y\in R^s(x)$) if, and only if, there exists $w\in\A$ such that:
$$
\varphi(wa,z^*b)=\varphi(ya,(x^*z^*)b)\;\mbox{ whenever }\; z\in L(x),\;\forall\, \varphi\in\mathcal M, \,\forall\, a,b\in\Ao,
$$
and
$$
\varphi(w^*a,vb)=\varphi(x^*a,(yv)b)\;\mbox{ whenever }\; v\in R(y),\;\forall\, \varphi\in\mathcal M,\,\forall \,a,b\in\Ao.
$$
In this case, we put $x\bullet y:=w$. \end{prop}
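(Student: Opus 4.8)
The plan is to show that the two conditions in Definition~\ref{strongprod2}, namely ({\sf sm}$_1$) and ({\sf sm}$_2$), are each equivalent to the existence of a single element $w \in \A$ satisfying the corresponding displayed identity, and that this $w$ is necessarily $x \bullet y$. The key observation is that conditions ({\sf sm}$_1$) and ({\sf sm}$_2$) are \emph{a priori} statements about the already-defined product $xy$ (which exists since we assume $x \in L(y)$), whereas the proposed characterization replaces $xy$ by an unknown $w$. So the real content is that the two formulations determine the same element.

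First I would prove the forward implication. If $x \bullet y$ is well-defined, then by definition $x \in L(y)$, so $xy = x \mult y$ exists by {\sf (wp)}. Setting $w := xy$, condition ({\sf sm}$_1$) reads exactly as $\varphi(wa, z^*b) = \varphi(ya, (x^*z^*)b)$ for all $z \in L(x)$, and ({\sf sm}$_2$) gives the second identity with $w^* = (xy)^* = y^*x^*$. Hence $w = xy$ witnesses the characterization. This direction is essentially a restatement.

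For the converse, suppose some $w \in \A$ satisfies both displayed identities. The crucial step is to recover that $w = xy$, i.e.\ that $w$ agrees with the already-existing weak product. To do this I would specialize the first identity by taking $z = e$ (the unit, which lies in $L(x)$ since $\A$ has a unit and $e \in L\A$): this yields $\varphi(wa, b) = \varphi(ya, x^*b)$ for all $a, b \in \Ao$ and all $\varphi \in \M$. But this is precisely the defining condition for the weak product $x \wmult y$ from the displayed definition preceding Proposition~\ref{prop_43}, together with condition {\sf (wp)} identifying $x \wmult y$ with $x \mult y = xy$. Since $\M$ is sufficient, such an element is unique, so $w = xy$ and in particular $x \in L(y)$. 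Substituting $w = xy$ back into the two identities recovers ({\sf sm}$_1$) and ({\sf sm}$_2$), so $x \bullet y$ is well-defined with $x \bullet y = w$.

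The main obstacle I anticipate is the bookkeeping around the unit: one must check that taking $z = e$ in the first identity (and symmetrically $v = e \in R(y)$ in the second) is legitimate, which requires that $e \in L(x) \cap R(y)$ and that the products $x^*z^* = x^*$ and $yv = y$ collapse correctly under the unit axioms $xe = ex = x$. Once $w$ is pinned down as the weak product via sufficiency of $\M$, the equivalence with Definition~\ref{strongprod2} is formal. I would also note explicitly that the symmetry $(x \bullet y)^* = y^* \bullet x^*$, built into the definition, is consistent with $w^* = y^*x^*$ appearing in the second identity, which provides a useful internal check but is not needed for the equivalence itself.
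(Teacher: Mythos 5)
Your proposal is correct and takes essentially the paper's own route: the paper states this proposition without proof (``The following characterization is immediate''), and the remark immediately following it records exactly your two ingredients --- uniqueness of $w$ from the sufficiency of $\M$, and the fact that the unit forces $xy$ to be defined with $x\bullet y = xy = w$, which is precisely your specialization $z=e$ (legitimate since $e\in L\A$, $e\ha = e$, $x\ha e = x\ha$). The only point glossed over on both sides is that pinning down $w$ uniquely also invokes the density condition ({\sf ips}$_2$) of $\lambda_\vp(\Ao)$ in $\H_\vp$, but this is the same level of detail the paper itself suppresses.
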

 \berem
 The uniqueness of $w$ results  from the sufficiency of the family $\mathcal M$.
Clearly, if $\A$ has a unit, then $x\bullet y:=w$ implies that $x y$ is defined and $x\bullet y=x y=w$.
\enrem
We have the following
\begin{prop}
The strong multiplication $x\bullet y$ of two elements $x, y \in \A$
is well-defined if and only if there exist $w \in \A$ and a net
$\{c_\alpha\}$ of elements of  $\Ao$ such that $c_\alpha
\stackrel{\tau^{\scriptscriptstyle\M} _s}{\to} y$ and
$$
 \vp((x\mult c_\alpha -w)a, z^* b)\to 0,\; \mbox{ if } z\in L(x), \forall \,\vp \in \mathcal M,\, a,b \in \Ao,
$$
$$
 \vp((c_\alpha^*\mult x^* -w^*)a, v b)\to 0, \mbox{ if } v\in R(y), \forall \, \vp \in \mathcal M,\, a,b \in \Ao.
$$
\end{prop}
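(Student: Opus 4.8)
The plan is to read the two displayed relations as the net form of the identities characterising $x\bullet y=w$ in Proposition~\ref{strongprod}, proceeding exactly as for the weak product in Proposition~\ref{prop_43}. The computational heart is a pair of invariance identities, valid for all $c,a,b\in\Ao$ and $\vp\in\M$:
\[
\vp\big((x\mult c)a,z^*b\big)=\vp\big(ca,(x^*z^*)b\big)\quad(z\in L(x)),\qquad
\vp\big((c^*\mult x^*)a,vb\big)=\vp\big(x^*a,(cv)b\big)\quad(v\in R(y)).
\]
For the first I would reassociate $(x\mult c)a=x(ca)$, legitimate because $c,a\in\Ao$: reducing both sides to evaluations of $\vp$ on $\Ao\times\Ao$ and using ({\sf ips}$_3$), ({\sf ips}$_4$) and the sufficiency of $\M$, this rests only on the associativity of the $*$-algebra $\Ao$; one then applies ({\sf ips}$_4$) with $p=x^*,\,q=z^*$ (admissible since $z\in L(x)$ gives $x^*\in L(z^*)$) and $m=ca\in\Ao,\,n=b$. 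The second is obtained similarly, using $(c^*\mult x^*)a=c^*(x^*a)$, the extension of ({\sf ips}$_3$) to the invariance $\vp(c^*m,n)=\vp(m,cn)$ for $m,n\in\A$ (from $\tau$-density of $\Ao$ and $\tau$-continuity of $\vp$), and $c(vb)=(cv)b$.

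For the direct implication, assume $x\bullet y=w$. By ({\sf d}$_1$) choose a net $\{c_\alpha\}\subset\Ao$ with $c_\alpha\stackrel{\tau}{\to}y$; since $\tau_s^{\scriptscriptstyle\M}$ is coarser than $\tau$ this gives $c_\alpha\stackrel{\tau_s^{\scriptscriptstyle\M}}{\to}y$ as well. Multiplication by the fixed elements $a\in\Ao$ and $v\in R(y)$ is $\tau$-continuous by ({\sf d}$_2$), so $c_\alpha a\stackrel{\tau}{\to}ya$ and $c_\alpha v\stackrel{\tau}{\to}yv$; as each $\vp\in\M$ is $\tau$-continuous, the right-hand sides of the two identities converge to $\vp(ya,(x^*z^*)b)$ and $\vp(x^*a,(yv)b)$. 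By Proposition~\ref{strongprod} these equal $\vp(wa,z^*b)$ and $\vp(w^*a,vb)$, and substituting back into the identities yields precisely the vanishing of the two displayed expressions.

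For the converse, suppose $w$ and $\{c_\alpha\}$ with the stated properties are given. Inserting the two identities into the hypotheses shows that $\vp(c_\alpha a,(x^*z^*)b)\to\vp(wa,z^*b)$ and $\vp(x^*a,(c_\alpha v)b)\to\vp(w^*a,vb)$. To recover the equalities of Proposition~\ref{strongprod} it remains to identify these limits with $\vp(ya,(x^*z^*)b)$ and $\vp(x^*a,(yv)b)$; by the Cauchy--Schwarz inequality \eqref{2.2} this reduces to $\vp\big((c_\alpha-y)a,(c_\alpha-y)a\big)\to0$ and $\vp\big(((c_\alpha-y)v)b,((c_\alpha-y)v)b\big)\to0$, that is, to the persistence of the convergences $c_\alpha a\to ya$ and $c_\alpha v\to yv$ in $\tau_s^{\scriptscriptstyle\M}$. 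Granting this, Proposition~\ref{strongprod} gives $x\bullet y=w$.

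This last point is exactly where the converse is harder: now only $\tau_s^{\scriptscriptstyle\M}$-convergence of $\{c_\alpha\}$ is available, so ({\sf d}$_2$) can no longer be invoked and one needs $\tau_s^{\scriptscriptstyle\M}$ itself to be stable under the auxiliary multiplications. This is the abstract shadow of the elementary fact used for $\LDH$, where $\vp_\xi(\,\cdot\,A,\,\cdot\,A)=\vp_{A\xi}$ with $A\xi\in\D$, so that ${\sf t}_s$-convergence — tested against \emph{every} vector of $\D$ — automatically controls the translated form. I would settle the left factor by checking that, for fixed $a\in\Ao$, the form $(x,y)\mapsto\vp(xa,ya)$ is again a $\tau$-continuous ips-form with core $\Ao$ (the conditions ({\sf ips}$_1$)--({\sf ips}$_4$) follow from $\Ao$-associativity and the invariance of $\vp$, continuity from ({\sf d}$_2$)); it therefore belongs to the family of ips-forms underlying $\tau_s^{\scriptscriptstyle\M}$ — automatically so for the maximal sufficient family $\P_{\Ao}(\A)$ — whence $\vp((c_\alpha-y)a,(c_\alpha-y)a)\to0$. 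The right factor $v\in R(y)$ is the genuinely delicate case, since $v\notin\Ao$ and ({\sf d}$_2$) does not apply to it; it must be controlled by the same circle of ideas (again mirroring $\LDH$), and verifying this stability, together with the two algebraic identities, are the only non-formal steps — the remainder being the limiting bookkeeping indicated above.
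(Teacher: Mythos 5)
Your overall strategy is the same as the paper's: approximate $y$ by a net in $\Ao$, use the ips-invariance identities together with ({\sf sm}$_1$)--({\sf sm}$_2$) (i.e.\ Proposition \ref{strongprod}) to reduce the two displayed conditions to expressions in $c_\alpha-y$, and pass to the limit by continuity; the paper merely routes the ``only if'' part through Proposition \ref{prop_43} and declares the converse straightforward. Your two invariance identities are correct. The genuine problem is in the limiting steps. In the forward direction you justify $c_\alpha v\stackrel{\tau}{\to}yv$ ``by ({\sf d}$_2$)'', but ({\sf d}$_2$) gives $\tau$-continuity of multiplication only by elements of $\Ao$, and $v\in R(y)$ need not lie in $\Ao$ --- a point you yourself concede when you reach the converse. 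So the second displayed convergence, $\vp((c_\alpha^*\mult x^*-w^*)a,vb)\to 0$, is not established: after your decomposition it requires $\vp(x^*a,((c_\alpha-y)v)b)\to 0$, i.e.\ control of a multiplication by $v$ that no stated hypothesis provides. It is worth noting how the paper's computation is organized to avoid multiplying by anything outside $\Ao$: it rewrites $c_\alpha^* x^*-y^* x^*=(x(c_\alpha-y))^*$ and uses the weak-product relation to move the whole difference onto the test vector,
$$
\vp((c_\alpha^* x^*-y^* x^*)a,a')=\vp(x^* a,(c_\alpha-y)a'),
$$
where only multiplication by the core element $a'$ occurs, so ({\sf d}$_2$) suffices; on the other hand the paper records this only for test vectors $a'\in\Ao$, not for $vb$, so it does not settle the point either.

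For the converse you have correctly isolated the obstruction but not resolved it, so your proposal does not prove the ``if'' part: from $c_\alpha\stackrel{\tau^{\scriptscriptstyle\M}_s}{\to}y$ alone one needs $\vp((c_\alpha-y)a,(c_\alpha-y)a)\to 0$ for every $a\in\Ao$ (and the analogous statement involving $v$), and this is exactly the stability of $\M$ under the translations $\vp\mapsto\vp_a$ --- condition ({\sf wb}$_2$) of Definition \ref{def:wbehaved} --- which holds for the vector forms $\vp_\xi$ on $\LDH$ (since $A\xi\in\D$) and for the maximal family $\P_{\Ao}(\A)$, but is not among the hypotheses of the proposition, where $\M$ is only assumed sufficient and ({\sf wp}) is in force. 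Your appeal to ``the family of ips-forms underlying $\tau^{\scriptscriptstyle\M}_s$'' silently replaces $\M$ by a larger family, which changes both the topology $\tau^{\scriptscriptstyle\M}_s$ and the meaning of the hypotheses. In short: your identities and bookkeeping match the paper's proof, but the two places where you stop (multiplication by $v$ in the forward direction, $\tau^{\scriptscriptstyle\M}_s$-stability in the converse) are genuine gaps in the argument as written, not formalities; closing them requires either an extra assumption of type ({\sf wb}$_2$) or a more careful choice of the approximating net than mere density provides.
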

\begin{proof} If $x\bullet y$ is well-defined, then $xy$ is well-defined. Then, by Proposition \ref{prop_43}, there exists a net $\{c_\alpha\} \subset \Ao$ such that $c_\alpha
\stackrel{\tau^{\scriptscriptstyle\M} _s}{\to} y$ and $xc_\alpha \stackrel{\tau^{\scriptscriptstyle\M} _w}{\to} xy$. Hence, by ({\sf ips}$_4$) and by the continuity of every $\vp \in \M$,
$$\vp((xc_\alpha-xy)a,a')= \vp(x(c_\alpha-y)a,a')=\vp((c_\alpha-y)a,x\ha a')\to 0$$
and
$$\vp((c_\alpha\ha x\ha - y\ha  x\ha)a,a')= \vp(x\ha a, (c_\alpha-y)a')\to 0.$$
The converse is straightforward.
\end{proof}

\begin{prop} \label{prop_412} Let $x,y$ be $\M$-bounded elements of $\A$. Then $x\bullet y$ is well-defined if and only if $xy$ is well-defined.
\end{prop}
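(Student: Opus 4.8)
The plan is to prove the two implications separately, the forward one being essentially free and the converse carrying all the content. The implication ``$x\bullet y$ well-defined $\Rightarrow xy$ well-defined'' is immediate from the remark following Proposition \ref{strongprod}, since $\A$ has a unit; this requires no boundedness at all. For the converse I would assume $xy$ is well-defined and aim to verify the two conditions ({\sf sm}$_1$) and ({\sf sm}$_2$) of Definition \ref{strongprod2}, which, together with $x\in L(y)$ (already granted), yield that $x\bullet y$ is well-defined, necessarily with $x\bullet y=xy$ by the uniqueness stemming from sufficiency of $\M$. The whole argument is carried out form by form, i.e.\ inside each GNS space $\H_\varphi$, $\varphi\in\M$.

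Fix $\varphi\in\M$ and pass to the GNS triple $(\pi_\varphi,\lambda_\varphi,\H_\varphi)$, so that $\pi_\varphi(c)\lambda_\varphi(a)=\lambda_\varphi(ca)$ for $c\in\A$, $a\in\Ao$, and $\lambda_\varphi(\Ao)$ is dense. The first observation is that $\M$-boundedness of $x$ makes $\pi_\varphi(x)$ bounded: since $\|\lambda_\varphi(xa)\|=\sup\{|\varphi(xa,b)|:b\in\Ao,\ \|\lambda_\varphi(b)\|=1\}\le\gamma\|\lambda_\varphi(a)\|$, the operator closes to $\overline{\pi_\varphi(x)}\in\B(\H_\varphi)$, and likewise for $x^*$ (Proposition \ref{prop_algebra}(i)); as $\pi_\varphi$ is a $*$-representation one has $\overline{\pi_\varphi(x^*)}=\overline{\pi_\varphi(x)}^{*}$. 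A second, key identity is that, because $xy$ is well-defined and $x$ is $\M$-bounded, $\pi_\varphi(xy)\lambda_\varphi(a)=\overline{\pi_\varphi(x)}\lambda_\varphi(ya)$ for every $a\in\Ao$: indeed, pairing with $\lambda_\varphi(b)$ and using the defining relation $\varphi((xy)a,b)=\varphi(ya,x^*b)$ of the weak product together with $\overline{\pi_\varphi(x^*)}=\overline{\pi_\varphi(x)}^{*}$ gives the equality on the dense set $\lambda_\varphi(\Ao)$.

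With these tools ({\sf sm}$_1$) follows by a short computation. For $z\in L(x)$ and $a,b\in\Ao$,
\[
\varphi((xy)a,z^*b)=\ip{\overline{\pi_\varphi(x)}\lambda_\varphi(ya)}{\lambda_\varphi(z^*b)}=\ip{\lambda_\varphi(ya)}{\overline{\pi_\varphi(x^*)}\lambda_\varphi(z^*b)}.
\]
It remains to identify $\overline{\pi_\varphi(x^*)}\lambda_\varphi(z^*b)$ with $\lambda_\varphi((x^*z^*)b)$, where $x^*z^*=(zx)^*$ is well-defined because $z\in L(x)$. This is the only delicate point: $z^*b$ is not a core element, so I cannot apply the algebraic action of $\pi_\varphi(x^*)$ directly and must test against the dense set. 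For $q\in\Ao$, using $\overline{\pi_\varphi(x^*)}^{*}=\overline{\pi_\varphi(x)}$ and $\overline{\pi_\varphi(x)}\lambda_\varphi(q)=\lambda_\varphi(xq)$, on one side $\ip{\overline{\pi_\varphi(x^*)}\lambda_\varphi(z^*b)}{\lambda_\varphi(q)}=\ip{\lambda_\varphi(z^*b)}{\lambda_\varphi(xq)}=\varphi(z^*b,xq)$, which equals $\varphi(b,(zx)q)$ by ({\sf ips}$_4$) (valid since $z\in L(x)$ and $b,q\in\Ao$); on the other side $\ip{\lambda_\varphi((x^*z^*)b)}{\lambda_\varphi(q)}=\varphi((zx)^*b,q)=\varphi(b,(zx)q)$ by ({\sf ips}$_3$). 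Density of $\lambda_\varphi(\Ao)$ then gives the identification, hence $\varphi((xy)a,z^*b)=\varphi(ya,(x^*z^*)b)$, i.e.\ ({\sf sm}$_1$).

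Finally ({\sf sm}$_2$) comes for free by applying what has just been proved to the pair $(y^*,x^*)$: these elements are again $\M$-bounded and $y^*x^*=(xy)^*$ is well-defined, so ({\sf sm}$_1$) for $(y^*,x^*)$ reads $\varphi((y^*x^*)a,Z^*b)=\varphi(x^*a,(yZ^*)b)$ for $Z\in L(y^*)$, which after the substitution $v=Z^*\in R(y)$ is precisely ({\sf sm}$_2$) for $(x,y)$. I expect the main obstacle to be exactly the handling of the non-core arguments $z^*b$ and $(x^*z^*)b$: it is $\M$-boundedness --- forcing $\pi_\varphi(x)$ and $\pi_\varphi(x^*)$ to extend to everywhere-defined bounded operators --- that allows one to move these terms across the inner product and reduce everything to ({\sf ips}$_3$) and ({\sf ips}$_4$) on $\Ao$.
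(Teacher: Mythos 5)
Your proof is correct and follows essentially the same route as the paper's: both pass to the GNS representations $\pi_\vp$, use $\M$-boundedness (via Proposition \ref{prop_algebra}) to make $\overline{\pi_\vp(x)}$ and $\overline{\pi_\vp(x\ha)}$ bounded operators, and verify ({\sf sm}$_1$) by moving these across the inner product and identifying the resulting vector with $\lambda_\vp((x\ha z\ha)b)$. Your only departures are cosmetic refinements --- testing the non-core vector against the dense set $\lambda_\vp(\Ao)$ via ({\sf ips}$_3$)--({\sf ips}$_4$) where the paper invokes the homomorphism property $\pi_\vp(x\ha)\mult\pi_\vp(z\ha)=\pi_\vp(x\ha z\ha)$, and deducing ({\sf sm}$_2$) from ({\sf sm}$_1$) applied to $(y\ha,x\ha)$ where the paper says ``proved in a similar way.''
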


\begin{proof} If $x\bullet y$ is well-defined, then $xy$ is obviously well-defined. Assume that $xy$ is well-defined. Then by Proposition
\ref{prop_algebra}, $xy$ is bounded. Let $z \in L(x)$. For $\vp \in \M$ we denote by $\pi_\vp$ the corresponding GNS representation. Then, as it is easily seen, for every $\M$-bounded element $z$, $\pi_\vp(z)$ is a bounded operator. Hence, for every $a,b\in \Ao$ and $\vp\in \M$,
\begin{align*}
|\vp( (xy)a, z\ha b)| &= |\ip{\pi_\vp(xy)\lambda_\vp(a)}{\pi_\vp(z\ha )\lambda_\vp(b)}|\\
&=  |\ip{\pi_\vp(x) \mult \pi_\vp(y)\lambda_\vp(a)}{\pi_\vp(z\ha )\lambda_\vp(b)}|\\
&\leq \|\overline{\pi_\vp(x)}\| \|\pi_\vp(y)\lambda_\vp(a) \|\, \|\pi_\vp(z\ha)\lambda_\vp(b) \|.
\end{align*}
This implies that $\pi_\vp(z\ha)\lambda_\vp(b) \in D(\pi_\vp(x)\ha)$. Hence,
\begin{align*}
\vp( (xy)a, z\ha b) &= \ip{\pi_\vp(y)\lambda_\vp(a)}{\pi_\vp(x)\ha\pi_\vp(z\ha )\lambda_\vp(b)}\\
&= \ip{\pi_\vp(y)\lambda_\vp(a)}{\pi_\vp(x\ha)\mult\pi_\vp(z\ha )\lambda_\vp(b)}\\
&= \ip{\pi_\vp(y)\lambda_\vp(a)}{\pi_\vp(x\ha z\ha )\lambda_\vp(b)}\\
&= \vp(ya, (x\ha z\ha) b).
\end{align*}
 Condition ({\sf sm}$_2$) is proved in a similar way.
\end{proof}

\begin{prop}\label{prop_413} Let $x,y$ be $\M$-bounded elements of $\A$. Then, for every $\vp\in \M$, $\pi_\vp(x)\mult\pi_\vp(y)$ is well-defined.
\end{prop}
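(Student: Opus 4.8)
The plan is to reduce the statement to the elementary observation that the weak product of two \emph{bounded} elements of a \po\ is automatically well-defined, once one knows that $\pi_\vp(x)$ and $\pi_\vp(y)$ are bounded. Fix $\vp\in\M$ and write $\D_\vp:=\lambda_\vp(\Ao)$ for the (dense) domain of the GNS representation $\pi_\vp$, so that $\pi_\vp(x),\pi_\vp(y)\in\LDHO{\D_\vp}{\H_\vp}$. The first step is to record that both operators are bounded. This is exactly the fact already invoked in the proof of Proposition \ref{prop_412}, and it follows immediately from $\M$-boundedness together with Proposition \ref{boundedprop}: for every $a\in\Ao$,
$$
\|\pi_\vp(x)\lambda_\vp(a)\|=\vp(xa,xa)^{1/2}\leq q_{\scriptscriptstyle\M}(x)\,\vp(a,a)^{1/2}=q_{\scriptscriptstyle\M}(x)\,\|\lambda_\vp(a)\|,
$$
so that $\pi_\vp(x)$ is bounded on $\D_\vp$, and likewise $\pi_\vp(y)$ is bounded.

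The second and decisive step is to notice that boundedness trivializes the two domain conditions defining the weak product. By definition $\pi_\vp(x)\mult\pi_\vp(y)$ is well-defined as soon as $\pi_\vp(x)\in L^{\rm w}(\pi_\vp(y))$, that is, as soon as $\pi_\vp(y)\D_\vp\subset D(\pi_\vp(x)\ad\x)$ and $\pi_\vp(x)\x\D_\vp\subset D(\pi_\vp(y)\x)$. Now $\pi_\vp(x)\ad=\pi_\vp(x)\x\up\D_\vp$ is a densely defined bounded operator (being the restriction to $\D_\vp$ of the bounded operator $\overline{\pi_\vp(x)}\x$), so its Hilbert-space adjoint $\pi_\vp(x)\ad\x$ is everywhere defined, i.e. $D(\pi_\vp(x)\ad\x)=\H_\vp$; similarly, $\pi_\vp(y)$ being bounded and densely defined, $\pi_\vp(y)\x$ is everywhere defined, i.e. $D(\pi_\vp(y)\x)=\H_\vp$. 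Both required inclusions therefore hold trivially, and $\pi_\vp(x)\mult\pi_\vp(y)$ is well-defined. Since $\vp\in\M$ was arbitrary, the proposition follows.

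I do not expect a genuine obstacle here: the entire content is the boundedness of $\pi_\vp(x)$ and $\pi_\vp(y)$ transported through the GNS construction from the $\M$-boundedness of $x$ and $y$, combined with the remark that the weak multiplication of two bounded operators is never obstructed. The only point that demands a little care is to keep the two distinct involutions apart when computing domains, namely the partial-algebra adjoint $\ad={}\x\up\D_\vp$ versus the Hilbert-space adjoint $\x$, so as to see correctly that $D(\pi_\vp(x)\ad\x)=D(\pi_\vp(y)\x)=\H_\vp$.
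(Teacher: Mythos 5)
Your proof is correct, but it reaches the conclusion by a genuinely different mechanism than the paper's. The paper never isolates the boundedness of $\pi_\vp(x)$ and $\pi_\vp(y)$ as operators; instead it estimates the mixed form directly,
\[
|\vp(ya,x\ha b)|\;\le\; \vp(ya,ya)^{1/2}\,\vp(x\ha b,x\ha b)^{1/2}\;\le\; q_{\scriptscriptstyle\M}(x)\,q_{\scriptscriptstyle\M}(y)\,\vp(a,a)^{1/2}\,\vp(b,b)^{1/2}
\]
(implicitly using $q_{\scriptscriptstyle\M}(x\ha)=q_{\scriptscriptstyle\M}(x)$ from Proposition \ref{prop_algebra}), then invokes the representation theorem for bounded sesquilinear forms to obtain $Z_\vp\in\B(\H_\vp)$ with $\ip{\pi_\vp(y)\lambda_\vp(a)}{\pi_\vp(x\ha)\lambda_\vp(b)}=\ip{Z_\vp\lambda_\vp(a)}{\lambda_\vp(b)}$, and concludes that the weak product exists via the characterization \eqref{altwp}. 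You instead verify the domain-inclusion definition of $\mult$ directly: each of $\pi_\vp(x)$, $\pi_\vp(y)$ is individually bounded (the same Cauchy--Schwarz/$\M$-boundedness input, and a fact the paper itself invokes in the proof of Proposition \ref{prop_412}), and for bounded, densely defined operators the two domain conditions are vacuous because the relevant Hilbert-space adjoints are everywhere defined. Your route is somewhat more elementary: it needs no Riesz-type representation theorem, and it makes explicit the general fact that bounded elements of $\LDHO{\D_\vp}{\H_\vp}$ are universal weak multipliers of one another (consistent with the paper's remark that $\LBDH$ is a C*-algebra under $\mult$). The paper's route buys a little more: it exhibits the product concretely as (the restriction of) a bounded operator $Z_\vp$ with norm at most $q_{\scriptscriptstyle\M}(x)\,q_{\scriptscriptstyle\M}(y)$, which is the quantitative form in which the result feeds into the later proof that $\D(q_{\scriptscriptstyle\M})$ is a C*-algebra. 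Your care in distinguishing the partial-algebra involution $\ad$ from the Hilbert-space adjoint $\x$ when computing $D(\pi_\vp(x)\ad\x)$ and $D(\pi_\vp(y)\x)$ is exactly the point where a sloppier version of this argument would go wrong, and you handled it correctly.
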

\begin{proof} Indeed, we have, for every $a,b \in \Ao$,
\begin{align*}
|\ip{\pi_\vp(y)\lambda_\vp(a)}{\pi_\vp(x\ha )\lambda_\vp(b)}| &= | \vp(ya, x\ha b)\\
& \leq  \vp(ya,ya)^{1/2} \vp(x\ha b, x\ha b)^{1/2} \\
&\leq  q_{\scriptscriptstyle\M} (x) q_{\scriptscriptstyle\M}(y) \vp(a,a)^{1/2} \vp(b,b) ^{1/2}.
\end{align*}
Then, by the representation theorem for bounded sesquilinear forms in Hilbert space, there exists $Z_\vp \in \B(\H_\vp)$ such that
$$ \ip{\pi_\vp(y)\lambda_\vp(a)}{\pi_\vp(x\ha )\lambda_\vp(b)} = \ip{Z_\vp \lambda_\vp(a)}{\lambda_\vp(b)}.$$
This implies that $\pi_\vp(x)\mult\pi_\vp(y)$ is well-defined.
\end{proof}
\berem We emphasize that this does \emph{not} imply that there exists $z \in \A$ such that \mbox{$\pi_\vp(x)\mult\pi_\vp(y)=\pi_\vp(z)$.}
This fact will motivate a further restriction on the family $\M$, see Definition \ref{def:wbehaved} below.
\enrem
It is natural to ask  under which assumptions $\Ao$ itself consists of bounded elements.

\begin{prop}Let $\A[\tau]$ be $\Ao$-regular. Assume that the directed family $\{p_\alpha\}_{\alpha \in I}$ defining the topology $\tau$ has the property that, for every $\alpha\in I$,
\begin{equation}\label{kapla}
\liminf_{n \to \infty} \left(  p_\alpha((a\ha a)^{2^n})\right)^{2^{-n}}<\infty, \quad \forall a\in \Ao.
\end{equation}
Then every $a\in \Ao$ is $\M$-bounded.
\end{prop}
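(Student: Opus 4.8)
The plan is to reduce the statement, via Proposition~\ref{boundedprop}, to producing a constant $\gamma$ with $\varphi(ax,ax)\le\gamma^2\varphi(x,x)$ for all $\varphi\in\M$ and all $x\in\Ao$; equivalently, $a$ should act as a bounded operator, uniformly in $\varphi$, in each GNS representation. I would set $b:=a^*a\in\Ao$, which is self-adjoint, fix $\varphi\in\M$ and $x\in\Ao$, and put $c_k:=\varphi(b^kx,x)$. Using $(\mathsf{ips}_3)$ repeatedly together with $(b^k)^*=b^k$, one records the two facts the argument rests on: first, $\varphi(b^kx,b^jx)=c_{k+j}$; second, $c_k\ge 0$ for every $k$, since $c_{2m}=\varphi(b^mx,b^mx)$ and $c_{2m+1}=\varphi(ab^mx,ab^mx)$. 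Note that $c_1=\varphi(ax,ax)$ is exactly the quantity to be controlled.

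The heart of the matter is a P\'ak-type power trick. From the Cauchy--Schwarz inequality \eqref{2.2} and the identity above, $c_k=|\varphi(b^kx,b^0x)|\le c_{2k}^{1/2}c_0^{1/2}$. Iterating this, a straightforward induction gives, for every $n$,
$$
c_1\le c_{2^n}^{\,2^{-n}}\,c_0^{\,1-2^{-n}}.
$$
Thus everything reduces to controlling $c_{2^n}=\varphi\big((a^*a)^{2^n}x,x\big)$ as $n\to\infty$.

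Here I would invoke the two standing hypotheses. The $\tau$-continuity of $\varphi$ supplies an index $\alpha$ and a constant $\gamma_\varphi>0$ with $c_{2^n}\le\gamma_\varphi\,p_\alpha\big(b^{2^n}x\big)\,p_\alpha(x)$. Condition $(\mathsf{d}_2)$ of $\Ao$-regularity, applied to right multiplication by the test element $x\in\Ao$, is exactly what lets me strip off that factor: it yields an index $\beta$ and $C_x>0$ with $p_\alpha\big(b^{2^n}x\big)\le C_x\,p_\beta\big((a^*a)^{2^n}\big)$. Combining these and raising to the power $2^{-n}$,
$$
c_{2^n}^{\,2^{-n}}\le\big(\gamma_\varphi C_x\,p_\alpha(x)\big)^{2^{-n}}\,p_\beta\big((a^*a)^{2^n}\big)^{2^{-n}}.
$$
Letting $n\to\infty$ along a subsequence realising the inferior limit, the prefactor tends to $1$ and $c_0^{1-2^{-n}}\to c_0$, while \eqref{kapla} guarantees $\liminf_n p_\beta\big((a^*a)^{2^n}\big)^{2^{-n}}=:M_\beta<\infty$. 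Hence $\varphi(ax,ax)=c_1\le M_\beta\,\varphi(x,x)$, which already shows that $\pi_\varphi(a)$ is bounded.

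The delicate point --- and the step I expect to be the main obstacle --- is the \emph{uniformity} of the constant over all $\varphi\in\M$ and all test elements $x$, which is what Proposition~\ref{boundedprop} genuinely demands: a priori the continuity index $\alpha$, and the index $\beta$ produced by $(\mathsf{d}_2)$, depend on $\varphi$ and on $x$, so the bound $M_\beta$ obtained above is only pointwise. The saving feature is that \emph{all} the $\varphi$- and $x$-dependent constants enter solely through their $2^{-n}$-th powers and therefore wash out in the limit, leaving a right-hand side controlled only by the $\varphi$-independent condition \eqref{kapla}. I would make this rigorous by working with the norm $q_{\scriptscriptstyle\M}$: taking the supremum over $\varphi$ and $x$ in the power inequality and using $\sup\liminf\le\liminf\sup$ gives $q_{\scriptscriptstyle\M}(a)^2\le\liminf_n q_{\scriptscriptstyle\M}\big((a^*a)^{2^n}\big)^{2^{-n}}$, after which finiteness must be extracted from \eqref{kapla} together with the fact that $\tau^{\scriptscriptstyle\M}_s$ is coarser than $\tau$. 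Organising the estimate so that a single, $\varphi$-independent bound survives the passage to the limit is precisely where the care is required, and is the crux of the proof.
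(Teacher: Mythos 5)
Your argument is, in substance, the paper's own proof. Your iteration $c_1\le c_{2^n}^{\,2^{-n}}c_0^{\,1-2^{-n}}$ is exactly the paper's Kaplansky-like inequality $\vp(ab,ab)\le\vp(b,b)^{1-2^{-(n+1)}}\,\vp((a\ha a)^{2^n}b,(a\ha a)^{2^n}b)^{2^{-(n+1)}}$ (indexed slightly differently, and stopping at the off-diagonal rather than the diagonal term), and the remaining steps --- $\tau$-continuity of $\vp$, condition ({\sf d}$_2$) for right multiplication by the test element, and passage to the liminf so that all auxiliary constants, entering only through their $2^{-n}$-th powers, wash out --- are the same. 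Up to the pointwise bound $\vp(ax,ax)\le M_\beta\,\vp(x,x)$, your proof and the paper's coincide.

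Concerning your final paragraph: the uniformity problem you flag is real, but the paper does not resolve it either. Its proof ends precisely where your pointwise estimate ends: the constant $\gamma_a=\liminf_{n\to\infty}\bigl(p_\alpha((a\ha a)^{2^n})\bigr)^{2^{-n}}$ is formed with an index $\alpha$ produced from the continuity of $\vp$ and of right multiplication by $b$, hence depends on $(\vp,b)$ exactly as your $M_\beta$ does; the paper simply writes $\gamma_a$ as though it depended on $a$ alone and stops. Moreover, your proposed repair is not sound as sketched: $q_{\scriptscriptstyle\M}\bigl((a\ha a)^{2^n}\bigr)$ is not yet known to be finite (finiteness of $q_{\scriptscriptstyle\M}$ on $\Ao$ is what is being proved), and the comparison of $\tau^{\scriptscriptstyle\M}_s$ with $\tau$ holds only with $\vp$-dependent constants, so it cannot upgrade \eqref{kapla} --- finiteness of the liminf for each fixed $\alpha$ --- to finiteness of a supremum over $\vp$, $x$ and $\alpha$. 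A genuinely uniform constant would require a stronger hypothesis, for instance a bound in \eqref{kapla} uniform in $\alpha$. So do not look for a hidden uniformization argument in the paper: your pointwise computation \emph{is} the intended proof, and the uniformity question you raise is a gap in the paper's own treatment rather than in yours.
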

\begin{proof}Let $a,b \in \Ao$. By the Cauchy-Schwarz inequality, we have
$$
 \vp(ab,ab) =  \vp(b,a\ha a b)  \leq \vp(b,b)^{1/2} \vp(a\ha a b, a\ha a b)^{1/2} =\vp(b,b)^{1/2} \vp(b, (a\ha a)^{2}b)^{1/2}.
$$
Iterating,  one obtains first
$$
 \vp(ab,ab) \leq  \vp(b,b)^{1/2+1/4}\vp((a\ha a)^2 b, (a\ha a)^2 b)^{1/4},
$$
and then the following Kaplansky-like inequality:
$$
 \vp(ab,ab) \leq \vp(b,b)^{1-2^{-(n+1)}} \vp( (a\ha a)^{2^n}b, (a\ha a)^{2^n}b)^{2^{-(n+1)}}.
 $$
By the continuity of $\vp$ and of the right multiplication by $b\in \Ao$, we can find a continuous seminorm $p$ such that
$$
 \vp(ab,ab) \leq \vp(b,b)^{1-2^{-(n+1)}} \left( p((a\ha a)^{2^n})\right)^{2^{-n}} p(b) ^{2^{-n}}.
 $$
On the other hand, there exists $\alpha$ and $\gamma >0$ such that $p(x)\leq \gamma\, p_\alpha(x)$, for every $x \in \A$. Hence,
$$
 \vp(ab,ab) \leq \vp(b,b)^{1-2^{-(n+1)}} \gamma ^{2^{-n+1}} \left( p_\alpha((a\ha a)^{2^n})\right)^{2^{-n}} p_\alpha(b) ^{2^{-n}}.
 $$
Taking the $\liminf$ of the rhs, we finally obtain
$$ \vp(ab, ab) \leq \gamma_a \, \vp(b,b), \quad \forall b \in \Ao,$$
where
$\gamma_a:=\liminf_{n \to \infty} \left(  p_\alpha((a\ha a)^{2^n})\right)^{2^{-n}}$.
\end{proof}

\beex  As shown in Section 3, $\LDH [{\sf t}_s]$ is a $\mathfrak{P}$-regular partial *-algebra. The seminorms defining ${\sf t}_s$ satisfy \eqref{kapla}, since the elements of $\mathfrak{P}$ are bounded
operators in Hilbert space. Indeed, if $A \in {\mathfrak P}$,
$$
 \|(A\ha A)^{2^n}\xi\| \leq \|A\|^{2^{n+1}}\, \norm{\xi}, \,\forall \,\xi \in \D,
$$
 and so \eqref{kapla} holds in this case.
\enex

The following \emph{mixed} associativity in $\A$ , similar to \eqref{assoc},  can be easily
proved by using Definition \ref{strongprod}.
\begin{prop}\label{assoc3abs}
Let $x,y,z \in \A$. Assume that $x\wmult y$, $(x\wmult y)\wmult z$ and $y \bullet z$ are all well-defined. Then $x \in L(y\bullet z)$ and
$$
x\wmult(y\bullet z)= (x\wmult y) \wmult z.
$$
\end{prop}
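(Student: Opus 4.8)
The plan is to unwind the defining sesquilinear-form identities and to feed the element $x$ into the strong-product condition $({\sf sm}_1)$ as a left multiplier. I would begin by fixing notation: set $p:=x\wmult y$ and $q:=(x\wmult y)\wmult z=p\wmult z$, both well-defined by hypothesis, and $w:=y\bullet z$. Because $\A$ has a unit, the remark following Proposition~\ref{strongprod} yields $w=y\bullet z=yz$. By condition {\sf (wp)}, the existence of $x\wmult y$ forces $x\in L(y)$, so $xy=p$ and, by the partial *-algebra axiom, $y^*x^*=(xy)^*=p^*$ is well-defined.

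Next I would record the two relations I intend to splice together. The definition of the weak multiplication applied to $q=p\wmult z$ reads
$$\varphi(za,p^*b)=\varphi(qa,b),\qquad\forall\,\varphi\in\M,\ \forall\,a,b\in\Ao.$$
Writing condition $({\sf sm}_1)$ of Definition~\ref{strongprod2} for the well-defined strong product $y\bullet z$ and evaluating it at the particular left multiplier $u=x\in L(y)$ gives
$$\varphi((yz)a,x^*b)=\varphi(za,(y^*x^*)b),\qquad\forall\,\varphi\in\M,\ \forall\,a,b\in\Ao.$$
This substitution $u=x$ is the one genuinely creative move in the argument: it is admissible precisely because $x\in L(y)$, and it is what links the strong product $y\bullet z$ to the factor $x$ against which we wish to multiply it.

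Finally I would chain the identities. Using $w=yz$ and $y^*x^*=p^*$,
$$\varphi(wa,x^*b)=\varphi((yz)a,x^*b)=\varphi(za,p^*b)=\varphi(qa,b)$$
for all $\varphi\in\M$ and $a,b\in\Ao$. By the very definition of the weak multiplication this means that $x\wmult w$ is well-defined with value $q$; in particular $x\in L(y\bullet z)$ by {\sf (wp)}, and $x\wmult(y\bullet z)=q=(x\wmult y)\wmult z$, as claimed.

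I do not anticipate a serious obstacle: after the definitions are unwound the proof collapses to a three-step chain of form equalities, and what little care is needed is purely bookkeeping — verifying that $y\bullet z=yz$ (which uses the unit), that the choice $u=x$ is legitimate (which uses $x\in L(y)$, i.e. the existence of $x\wmult y$ together with {\sf (wp)}), and that $y^*x^*=(xy)^*$. It is worth noting that only $({\sf sm}_1)$, and not $({\sf sm}_2)$, enters, which is consistent with our having to establish a single product identity rather than a symmetric pair.
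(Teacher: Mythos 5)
Your proof is correct and takes exactly the route the paper intends: the paper omits the argument entirely, asserting only that the proposition ``can be easily proved by using Definition \ref{strongprod}'', and your chain of identities --- condition $({\sf sm}_1)$ for $y\bullet z$ evaluated at the admissible left multiplier $x\in L(y)$, spliced with the defining identity of $(x\wmult y)\wmult z$ and closed off by the definition of the weak product and sufficiency of $\M$ --- is precisely that easy proof. The bookkeeping you flag ($x\in L(y)$ via {\sf (wp)}, $y^\ha x^\ha=(xy)^\ha$, and $y\bullet z=yz$ from the unit) is all that was needed, so nothing is missing.
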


As we have seen in Section
\ref{sect_preliminaries}, the $\tau^{\scriptscriptstyle\M} _{s^*}$-density of $\Ao$ and Proposition \ref{prop_415} imply the existence of  a strong
multiplication {\em induced} by $\Ao$. But this multiplication is, in general, only
a restriction of the multiplication $\bullet$ defined above. However, let us assume that  $\A$ is \emph{semi-associative with respect to
$\Ao$}, by which we mean that
\begin{equation} \label{eq_semiasswrtoazero}
(xa)b= x(ab); \quad a(xb)= (ax)b, \;\forall\, x \in \A, \, a,b \in \Ao.
\end{equation}
In other words, $(\A,\Ao)$ is a quasi *-algebra.
In that case,  Proposition \ref{assoc3abs} implies the topological regularity of $\A[\tau]$.

\begin{prop} Let $\A$ be semi-associative with respect to $\Ao$. Then $\A[\tau^{\scriptscriptstyle\M} _{s}]$ (and hence $\A[\tau]$) is topologically regular.
\end{prop}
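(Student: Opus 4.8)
The plan is to verify the definition of topological regularity directly, namely that $\A[\tau^{\scriptscriptstyle\M}_s]$ satisfies condition {\sf (cl)} and that $\Ao$ serves as a distinguished *-subalgebra for which the associative law \eqref{assoc} holds. The condition {\sf (cl)} with respect to $\tau^{\scriptscriptstyle\M}_s$ is already established in Proposition \ref{prop_415}. The $\tau^{\scriptscriptstyle\M}_{s^*}$-density of $\Ao$ in $\A$ follows from its $\tau$-density, condition ({\sf d}$_1$), since $\tau^{\scriptscriptstyle\M}_{s^*}$ is coarser than $\tau$. Moreover $\Ao$ is a *-subalgebra contained in $R\A \cap L\A$, and it contains the unit $e$. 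Thus the only genuine content to check is the associative law defining topological regularity.

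First I would observe that since $\A$ is semi-associative with respect to $\Ao$, the pair $(\A,\Ao)$ is a quasi *-algebra, so in particular the strong multiplication $\qsmult$ induced by $\Ao$ is available and the left multiplications $\OL_x$ are $\tau^{\scriptscriptstyle\M}_s$-closable on $\Ao$. The key identification I would make is that the multiplication $\qsmult$ induced by $\Ao$ coincides with (a restriction of) the strong multiplication $\bullet$ of Definition \ref{strongprod2}: if $x \qsmult y$ is well-defined, there is a net $\{c_\alpha\}\subset\Ao$ with $c_\alpha \stackrel{\tau^{\scriptscriptstyle\M}_s}{\to} y$ and $x c_\alpha \stackrel{\tau^{\scriptscriptstyle\M}_s}{\to} z$, and I would show, using ({\sf ips}$_4$), the semi-associativity \eqref{eq_semiasswrtoazero} and the continuity of each $\vp\in\M$, that the two defining conditions ({\sf sm}$_1$) and ({\sf sm}$_2$) of $x\bullet y$ hold, so that $z = x\bullet y$. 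This places us in the situation where $y\in R_{\Ao}(x)$ implies $x\bullet y$ is defined and equal to $x\qsmult y$.

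The heart of the argument is then to invoke Proposition \ref{assoc3abs}, the mixed associativity, to obtain \eqref{assoc}. Concretely, suppose $z\in R(y)$, $yz\in R(x)$ and $y\in R_{\Ao}(x)$. From the last condition and the identification above, $x\bullet y$ is well-defined; together with $x\wmult y = xy$ being defined and $(x\wmult y)\wmult z = x(yz)$ being defined (from $yz\in R(x)$ and semi-associativity), all three hypotheses of Proposition \ref{assoc3abs} are met with the roles arranged so that the product $y\bullet z$ plays no obstructing role — here I would be careful to match the variables correctly, reading Proposition \ref{assoc3abs} with $y$ replaced by the inducing-multiplier $y$ and checking that the strong product appearing is exactly $x\bullet y = x\qsmult y$. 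The conclusion $z\in R(x\qsmult y)$ and $x(yz)=(x\qsmult y)z$ is then precisely \eqref{assoc}.

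The main obstacle I anticipate is the bookkeeping in matching the abstract mixed-associativity statement of Proposition \ref{assoc3abs} to the specific associativity required in the definition of topological regularity: the two are stated with the strong product on different factors, and one must confirm that the induced product $\qsmult$ (which is what enters the regularity condition) is genuinely the restriction of $\bullet$ so that Proposition \ref{assoc3abs} applies verbatim. Establishing this coincidence of $\qsmult$ with $\bullet$ rigorously, rather than merely as inclusions of graphs, is where the semi-associativity hypothesis \eqref{eq_semiasswrtoazero} does its essential work, and it is the step I would write out most carefully. The continuity and density facts, by contrast, are routine.
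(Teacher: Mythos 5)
Your strategy is the same as the paper's: condition ({\sf cl}) comes from Proposition \ref{prop_415}; the induced multiplication $\qsmult$ is identified as a restriction of the strong multiplication $\bullet$ by verifying ({\sf sm}$_1$) and ({\sf sm}$_2$) through ({\sf ips}$_4$), the semi-associativity \eqref{eq_semiasswrtoazero} and the continuity of the forms in $\M$ (this verification is exactly the paper's displayed computation); and the associativity law \eqref{assoc} is then to be extracted from the mixed associativity of Proposition \ref{assoc3abs}, which is also what the paper does (implicitly, in the paragraph preceding the statement).

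The gap is in how you feed Proposition \ref{assoc3abs}. Its hypotheses are that $x\wmult y$, $(x\wmult y)\wmult z$ and $y\bullet z$ are defined: the strong product sits on the \emph{second} pair and the weak products are associated from the left. In \eqref{assoc}, instead, the strong product $x\qsmult y$ sits on the \emph{first} pair and the given weak products $yz$, $x(yz)$ are associated from the right. You list as hypotheses ``$x\bullet y$ defined, $x\wmult y$ defined, $(x\wmult y)\wmult z$ defined'', justifying the last one ``from $yz\in R(x)$ and semi-associativity''. This fails twice. First, the definedness of $(x\wmult y)\wmult z$ is precisely the conclusion $z\in R(x\qsmult y)$ you must prove; semi-associativity with respect to $\Ao$, i.e.\ \eqref{eq_semiasswrtoazero}, concerns only products in which two of the factors lie in $\Ao$ and says nothing about three general elements of $\A$ --- and if it did give $(xy)z=x(yz)$, then \eqref{assoc} would follow at once and Proposition \ref{assoc3abs} would be superfluous. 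Second, the hypothesis $y\bullet z$ cannot ``play no obstructing role'': it is one of the three assumptions and is simply not available. The correct bookkeeping is to apply Proposition \ref{assoc3abs} to the adjoint triple $(z\ha,y\ha,x\ha)$: $z\in R(y)$ gives $z\ha\wmult y\ha$ defined; $yz\in R(x)$ gives $(z\ha\wmult y\ha)\wmult x\ha=(x(yz))\ha$ defined; and $y\in R_{\Ao}(x)$, via your identification $\qsmult\subseteq\bullet$ together with the fact that both $\wmult$ and $\bullet$ are compatible with the involution, gives $y\ha\bullet x\ha=(x\bullet y)\ha$ defined. The conclusion $z\ha\wmult(y\ha\bullet x\ha)=(z\ha\wmult y\ha)\wmult x\ha$ then yields, after taking adjoints, $z\in R(x\qsmult y)$ and $(x\qsmult y)z=x(yz)$, which is \eqref{assoc}. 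With this repair your proof coincides with the paper's.
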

\begin{proof} By Proposition \ref{prop_415}, the operator of left multiplication $\OL_x$ defined on $R(x)$  is $\tau$-closed, for every $x \in \A$. Let $y \in D({\ZOL_x})$, where $\ZOL_x$ denotes the closure of the restriction of $\OL_x$ to $\Ao$. Thus, there exists a net $\{y_\alpha\}\subset \Ao$ and $w \in \A$
such that $y_\alpha \stackrel{\tau^{\scriptscriptstyle\M} _{s}}{\to}y$ and $xy_\alpha \stackrel{\tau^{\scriptscriptstyle\M} _{s}}{\to}w$. Then, $x \in L(y)$ and, using ({\sf ips}$_4$),
\begin{align*} \vp((xy)a,z\ha b)&= \lim_\alpha \vp((xy_\alpha)a,z\ha b) = \lim_\alpha \vp(x(y_\alpha a),z\ha b)\\
&= \lim_\alpha \vp(y_\alpha a,(x\ha z\ha) b) = \vp(ya, (x\ha z\ha) b), \; \forall \,\vp \in \M, \, a,b \in \Ao.
\end{align*}
Hence ({\sf sm}$_1$) holds. The proof of ({\sf sm}$_2$) is similar.
\end{proof}

\berem If $\A$ is semi-associative with respect to $\Ao$, then $\Ao \subset R^{\rm s}\,\A$, the set of universal strong right multipliers of $\A$.
\enrem

An element $x$ has a {\em strong inverse} if there exists $x^{-1}\in \A$ such that $x\bullet x^{-1} = x^{-1} \bullet x = e$. The mixed associativity of Proposition \ref{assoc3abs} implies that, if a strong inverse  of $x$ exists, then it is unique.

\begin{theo}\label{thm_417}
Let $\A[\tau]$ be an $\Ao$-regular \pa\ satisfying condition {\sf (wp)} and let $\mathcal M$ be the set of all continuous ips-forms with core
$\Ao$. Let $\pi$ be a $(\tau,{\sf t}_{s})$-continuous *-representation
of $\A$, (that is $\pi:\A[\tau]\rightarrow\LDH[{\sf t}_{s}]$ continuously). Then, an element $x\in \A$ is $\mathcal M$-bounded if and only if $\pi(x)$ is a bounded operator.
\end{theo}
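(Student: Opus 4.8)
The plan is to probe $x$ with the vector forms carried by $\pi$, using crucially that $\M$ is the \emph{maximal} family (all continuous ips-forms with core $\Ao$), so that every such vector form automatically belongs to $\M$. For $\xi\in\D$ put $\psi_\xi(a,b):=\ip{\pi(a)\xi}{\pi(b)\xi}$, $a,b\in\A$. This is a positive sesquilinear form; the $(\tau,{\sf t}_s)$-continuity of $\pi$ yields, for fixed $\xi$, an index $\alpha$ and $\gamma_\xi>0$ with $\|\pi(a)\xi\|\le\gamma_\xi\,p_\alpha(a)$, hence $|\psi_\xi(a,b)|\le\gamma_\xi^2\,p_\alpha(a)\,p_\alpha(b)$ and $\psi_\xi$ is $\tau$-continuous. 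Condition ({\sf ips}$_1$) holds since $\Ao\subset R\A$, and ({\sf ips}$_3$)--({\sf ips}$_4$) follow from $\pi(a^*)=\pi(a)\ad$, $\pi(ab)=\pi(a)\mult\pi(b)$ and the characterisation \eqref{altwp} of the weak product, the only delicate point being the domain bookkeeping for $\pi(b)\xi$ ($b\in\Ao$), which \eqref{altwp} and the footnote convention for $D(\cdot\ad)$ take care of. As continuity forces ({\sf ips}$_2$), each $\psi_\xi$ is a continuous ips-form with core $\Ao$, so $\psi_\xi\in\M$.

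First I would settle the implication $x$ $\M$-bounded $\Rightarrow$ $\pi(x)$ bounded. By Proposition \ref{boundedprop} (where $\Ao$-regularity is used), $\M$-boundedness provides $\gamma>0$ with $\varphi(xa,xa)\le\gamma^2\varphi(a,a)$ for all $\varphi\in\M$, $a\in\Ao$. Taking $\varphi=\psi_\xi$ and $a=e$ (so $xe=x$, $\pi(e)=I_\D$; if $e\notin\Ao$ one approximates $e$ by a net in $\Ao$ and passes to the limit via closedness of $\overline{\pi(x)}$) gives $\|\pi(x)\xi\|^2\le\gamma^2\|\xi\|^2$ for every $\xi\in\D$, so $\overline{\pi(x)}\in\B(\H)$ with $\|\overline{\pi(x)}\|\le\gamma$.

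For the converse, $\pi(x)$ bounded $\Rightarrow$ $x$ $\M$-bounded, I would start again from the same forms. If $M:=\|\overline{\pi(x)}\|<\infty$ then $\pi(xa)=\pi(x)\mult\pi(a)=\overline{\pi(x)}\,\pi(a)$ on $\D$, whence $\psi_\xi(xa,xa)=\|\pi(xa)\xi\|^2\le M^2\|\pi(a)\xi\|^2=M^2\psi_\xi(a,a)$ for all $a\in\Ao$, $\xi\in\D$, with $M$ independent of $\xi$. By Proposition \ref{boundedprop} this already shows that $x$ is bounded relative to every form of the subfamily $\M_\pi:=\{\psi_\xi:\xi\in\D\}\subseteq\M$; what remains is to raise the estimate to an \emph{arbitrary} $\varphi\in\M$.

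This last passage is the main obstacle, since a given continuous representation sees only its own vector forms $\M_\pi$, whereas $\M$-boundedness requires the uniform bound $\varphi(xa,xa)\le M^2\varphi(a,a)$ against \emph{all} continuous ips-forms. The way I would attack it is representation-theoretic: each $\varphi\in\M$ carries a GNS triple $(\pi_\varphi,\lambda_\varphi,\H_\varphi)$, each $\pi_\varphi$ is $(\tau,{\sf t}_s)$-continuous (the continuity of $\varphi$ together with the continuity of right multiplication by elements of $\Ao$, condition ({\sf d}$_2$), bounds $a\mapsto\|\pi_\varphi(a)\lambda_\varphi(b)\|$ by a $\tau$-continuous seminorm), and $\varphi(xa,xa)=\|\pi_\varphi(x)\lambda_\varphi(a)\|^2$. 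Thus the target is exactly the uniform operator bound $\sup_{\varphi\in\M}\|\pi_\varphi(x)\|\le M$, i.e.\ $q_{\scriptscriptstyle\M}(x)\le M$ for the C*-norm $q_{\scriptscriptstyle\M}$; equivalently, the boundedness of $x$ in $\pi$ must be matched against its boundedness in the universal representation $\pi_\M=\bigoplus_{\varphi\in\M}\pi_\varphi$, whose norm on $x$ is precisely $q_{\scriptscriptstyle\M}(x)$. Transferring boundedness from the single representation $\pi$ to this universal bound — where the maximality of $\M$ has to be exploited to the hilt — is the step I expect to demand the greatest care.
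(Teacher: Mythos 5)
Your forward implication is essentially the paper's argument, carried out correctly: the vector forms $\psi_\xi(a,b)=\ip{\pi(a)\xi}{\pi(b)\xi}$ are $\tau$-continuous ips-forms with core $\Ao$, hence lie in the maximal family $\M$, and evaluating the bound of Proposition \ref{boundedprop} at $a=e$ gives $\|\pi(x)\xi\|^2\leq q_{\scriptscriptstyle\M}(x)^2\|\xi\|^2$ for all $\xi\in\D$. That half needs no repair.

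The converse, however, stalls at a step that is not merely delicate but impossible as you have framed it. With a \emph{fixed} continuous representation $\pi$, boundedness of $\pi(x)$ cannot imply $\M$-boundedness: the zero map is a $(\tau,{\sf t}_{s})$-continuous *-representation in the sense of Section \ref{sect_preliminaries} (no nondegeneracy or unitality is imposed on $\pi$), and it represents \emph{every} element, $\M$-bounded or not, by the bounded operator $0$; more generally a fixed $\pi$ only sees its own vector forms $\M_\pi$, and nothing can force an estimate against an unrelated $\varphi\in\M$. So the ``transfer'' from the single representation $\pi$ to the universal bound $q_{\scriptscriptstyle\M}(x)$, which you correctly identify as the crux, does not exist. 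The paper's own proof avoids it by reading the quantifier differently: its converse explicitly begins by assuming that $\pi(x)$ is bounded for \emph{all} $(\tau,{\sf t}_{s})$-continuous *-representations $\pi$ of $\A$, and this is how the ``if and only if'' in the statement must be understood. Under that hypothesis the argument is immediate and coincides with your own preparation: for each $\varphi\in\M$ the GNS representation $\pi_\varphi$ is $(\tau,{\sf t}_{s})$-continuous (exactly as you verify via the continuity of $\varphi$ and of right multiplication by $\Ao$), so by hypothesis $\pi_\varphi(x)$ is bounded on $\D_\varphi=\lambda_\varphi(\Ao)$, i.e.\ $\varphi(xa,xa)=\|\pi_\varphi(x)\lambda_\varphi(a)\|^2\leq\gamma^2\varphi(a,a)$ for all $a\in\Ao$, which by Proposition \ref{boundedprop} is $\M$-boundedness. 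Thus the fix is not a cleverer estimate but applying the hypothesis to each $\pi_\varphi$ separately. (One residual subtlety, present in the paper's proof as well: the constant $\gamma$ obtained this way depends a priori on $\varphi$, whereas the definition of $\M$-boundedness asks for a single $\gamma$ valid for all $\varphi\in\M$; the paper passes over this uniformity question in silence.)
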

\begin{proof}
Let us define the following positive sesquilinear form:
$$
\varphi_\xi(x,y):=\ip{\pi(x)\xi}{\pi(y)\xi}.
$$
The conditions ({\sf ips}$_3$) and ({\sf ips}$_4$) are easily verified. By the continuity of $\pi$ we get
\begin{align*}
|\varphi_\xi(x,y)|&=|\ip{\pi(x)\xi}{\pi(y)\xi}| \leq \|\pi(x)\xi\|\|\pi(y)\xi\|
\\
 &\leq \gamma\,p_\alpha(x)p_\alpha(y),
\end{align*}
for some $\gamma >0$. Then $\vp_\xi$ is an ips- form and $\varphi_\xi\in\mathcal M$.

If $x$ is $\mathcal M$-bounded, by definition, we have:
$$
\varphi_\xi(xa,xa)\leq q_{\scriptscriptstyle\M}(x)^2\varphi_\xi(a,a),\quad\forall\, \xi\in\mathcal D,\, \forall\, a\in\Ao.
$$
For $a=e$, one has: $\varphi_\xi(x,x)=\|\pi(x)\xi\|^2\leq q_{\scriptscriptstyle\M}(x)\varphi_\xi(e,e)=q_{\scriptscriptstyle\M}(x)\|\xi\|^2$.

Conversely, let us suppose that $\pi(x)$ is bounded for all
$(\tau,{\sf t}_{s})$-continuous
*-representation $\pi$ of $\A$. In particular, the GNS
representation $\pi_\varphi$ defined by $\varphi\in\M$ is
$(\tau,{\sf t}_{s})$-continuous, so it is bounded on $\mathcal D_\varphi:=\{\lambda_\varphi(a),\, a\in\Ao\}$. Then, there exists
$\gamma >0$ such that $\|\pi_\varphi(x)\xi\|^2\leq \gamma ^2\|\xi\|^2,\;\forall\, \xi\in\mathcal D_\varphi$. Since
$\xi=\lambda_\varphi(a)$ with $a\in\Ao$, then
$\|\pi_\varphi(x)\lambda_\varphi(a)\|^2\leq \gamma ^2\|\lambda_\varphi(a)
\|^2, \;\forall \, a\in\Ao$, i.e., $\varphi(xa,xa)\leq \gamma ^2\varphi(a,a)$ and $x$ is $\mathcal M$-bounded.
\end{proof}

We expect that $\M$-bounded elements can also be characterized in terms of their spectral behavior. For this, some additional assumptions on the family $\M$ of ips-forms are needed.

\bedefi
\label{def:wbehaved}
Let $\M$ be a family of continuous ips-forms on $\A\times \A$. For every $\vp \in \M$, let $\pi_\vp$ denote the corresponding GNS representation. We say that $\M$ is {\em well-behaved} if
\begin{itemize}
  \item[({\sf wb}$_1$)] $\M$ is sufficient;
  \item[({\sf wb}$_2$)]  For every $\vp\in\M$ and every $a\in\A$, $\vp_a\in\M$ also, where $\vp_a(x,y): = \vp(xa,ya)$;
  \item[({\sf wb}$_3$)] If $x, y \in \A$ and $\pi_\vp(x)\mult \pi_\vp(y)$ is well-defined for every $\vp\in \M$, then there exists $z \in \A$ such that $\pi_\vp(x)\mult \pi_\vp(y) = \pi_\vp (z)$, for every $\vp \in \M$;
   \item[({\sf wb}$_4$)] $\A$ is $\tau^{\scriptscriptstyle\M} _{s^*}$-complete.
\end{itemize}
\findefi
To give an example, if $\MM = \LDH[{\sf t}_{s^\ast}]$ or if $\MM$ is any partial GC*-algebra of operators, the family
$$
\M := \{\psi_\xi :  \xi \in \D, \, \psi_\xi (X,Y) = \ip{X\xi}{Y\xi}, \, X,Y \in \MM\}
$$
is well-behaved.

\begin{prop} If $\M$ is well-behaved, then $\D(q_{\scriptscriptstyle\M})$ is a C*-algebra with the strong multiplication $\bullet$ and the norm $q_{\scriptscriptstyle\M}$.
\end{prop}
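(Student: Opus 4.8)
The plan is to verify, in turn, that $\D(q_{\scriptscriptstyle\M})$ is a $*$-algebra under $\bullet$, that $q_{\scriptscriptstyle\M}$ is a C*-norm on it, and that it is $q_{\scriptscriptstyle\M}$-complete. The guiding observation is that, by the remark following Proposition~\ref{strongprod}, whenever $x\bullet y$ is defined (and $\A$ has a unit) one has $x\bullet y=xy$; thus on $\D(q_{\scriptscriptstyle\M})$ the strong multiplication is just the restriction of the partial multiplication, and the algebraic identities (bilinearity, the involution rule $(x\bullet y)^*=y^*\bullet x^*$) reduce to the corresponding identities for $xy$ together with Proposition~\ref{prop_algebra}(i). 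Proposition~\ref{prop_algebra} already gives that $\D(q_{\scriptscriptstyle\M})$ is stable under $*$, that $q_{\scriptscriptstyle\M}(x^*)=q_{\scriptscriptstyle\M}(x)$, and that $q_{\scriptscriptstyle\M}(x\bullet y)=q_{\scriptscriptstyle\M}(xy)\le q_{\scriptscriptstyle\M}(x)q_{\scriptscriptstyle\M}(y)$; combined with the already-established fact that $q_{\scriptscriptstyle\M}$ is an (unbounded) C*-norm, this settles all the norm requirements, including the C*-identity $q_{\scriptscriptstyle\M}(x^*\bullet x)=q_{\scriptscriptstyle\M}(x^*x)=q_{\scriptscriptstyle\M}(x)^2$.

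The hard part, and the place where well-behavedness is essential, is to show that $x\bullet y$ is defined for \emph{every} pair $x,y\in\D(q_{\scriptscriptstyle\M})$, so that $\bullet$ is a total operation. Given $\M$-bounded $x,y$, Proposition~\ref{prop_413} guarantees that $\pi_\vp(x)\mult\pi_\vp(y)$ is well-defined for every $\vp\in\M$; condition ({\sf wb}$_3$) then produces an element $w\in\A$ with $\pi_\vp(x)\mult\pi_\vp(y)=\pi_\vp(w)$ for all $\vp$. To identify $w$ with the product, I would use the GNS relation $\pi_\vp(u)\lambda_\vp(c)=\lambda_\vp(uc)$ together with \eqref{altwp} and $\pi_\vp(x)\ad=\pi_\vp(x^*)$ to compute, for $a,b\in\Ao$,
\[
\vp(wa,b)=\ip{(\pi_\vp(x)\mult\pi_\vp(y))\lambda_\vp(a)}{\lambda_\vp(b)}=\ip{\pi_\vp(y)\lambda_\vp(a)}{\pi_\vp(x^*)\lambda_\vp(b)}=\vp(ya,x^*b).
\]
This is precisely the condition defining $x\wmult y=w$, so by {\sf (wp)} the partial product $xy=w$ exists, and then Proposition~\ref{prop_412} yields that $x\bullet y$ is well-defined and equals $w$. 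With totality in hand, bilinearity and the involution rule follow as above, so $\D(q_{\scriptscriptstyle\M})$ is closed under $\bullet$.

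Associativity remains, and I would deduce it representation-by-representation. For $\M$-bounded elements $\pi_\vp(x)$ is a bounded operator, so the relevant weak products are genuine (bounded) operator products and hence associative:
\[
\pi_\vp((xy)z)=(\pi_\vp(x)\mult\pi_\vp(y))\mult\pi_\vp(z)=\pi_\vp(x)\mult(\pi_\vp(y)\mult\pi_\vp(z))=\pi_\vp(x(yz)),
\]
for every $\vp$ (all products exist by Propositions~\ref{prop_algebra} and \ref{prop_413}). Setting $u:=(xy)z-x(yz)$, linearity gives $\pi_\vp(u)=0$, whence $\vp(u,u)=\|\pi_\vp(u)\lambda_\vp(e)\|^2=0$ (using $e\in\Ao$ and $ue=u$); invoking the sufficiency ({\sf wb}$_1$) of $\M$ gives $u=0$. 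Thus $\bullet$ is associative, and $\D(q_{\scriptscriptstyle\M})$ is a normed $*$-algebra satisfying the C*-identity.

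Finally, completeness is where ({\sf wb}$_2$) and ({\sf wb}$_4$) enter. Let $\{x_n\}$ be $q_{\scriptscriptstyle\M}$-Cauchy, with $q_{\scriptscriptstyle\M}(x_n)\le C$. Since $\vp(za,za)\le q_{\scriptscriptstyle\M}(z)^2\vp(a,a)$, taking $a=e$ shows that $q_{\scriptscriptstyle\M}$ dominates every $\tau_s^{\scriptscriptstyle\M}$-seminorm up to the constant $\vp(e,e)^{1/2}$, and since $q_{\scriptscriptstyle\M}(x^*)=q_{\scriptscriptstyle\M}(x)$ the sequence is $\tau_{s^*}^{\scriptscriptstyle\M}$-Cauchy; by ({\sf wb}$_4$) it has a $\tau_{s^*}^{\scriptscriptstyle\M}$-limit $x\in\A$. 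The key point is that, by ({\sf wb}$_2$), each map $z\mapsto\vp(za,za)^{1/2}=\vp_a(z,z)^{1/2}$ (with $\vp_a\in\M$) is itself a $\tau_s^{\scriptscriptstyle\M}$-seminorm, so $\vp(x_na,x_na)^{1/2}\to\vp(xa,xa)^{1/2}$; passing to the limit in the Cauchy estimate gives both $\vp(xa,xa)^{1/2}\le C\,\vp(a,a)^{1/2}$, whence $x\in\D(q_{\scriptscriptstyle\M})$, and, letting $m\to\infty$ in $q_{\scriptscriptstyle\M}(x_n-x_m)\le\varepsilon$, that $q_{\scriptscriptstyle\M}(x_n-x)\le\varepsilon$ for large $n$, i.e. $x_n\to x$ in $q_{\scriptscriptstyle\M}$. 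The main obstacle throughout is the totality of $\bullet$: it cannot be obtained without the representation/approximation machinery of Proposition~\ref{prop_413} together with condition ({\sf wb}$_3$), and matching $q_{\scriptscriptstyle\M}$-convergence to $\tau_{s^*}^{\scriptscriptstyle\M}$-convergence for completeness relies in an essential way on ({\sf wb}$_2$).
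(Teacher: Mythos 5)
Your proof is correct and follows essentially the same route as the paper's: totality of $\bullet$ on $\D(q_{\scriptscriptstyle\M})$ via Proposition \ref{prop_413}, ({\sf wb}$_3$), the GNS identification of the product, condition {\sf (wp)} and Proposition \ref{prop_412}; the norm axioms from Proposition \ref{prop_algebra} and the fact that $q_{\scriptscriptstyle\M}$ is a C*-norm; and completeness from ({\sf wb}$_2$) and ({\sf wb}$_4$) exactly as in the paper. Your explicit representation-by-representation check of associativity is a detail the paper leaves implicit, but it does not change the approach.
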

\begin{proof}
By Proposition \ref{prop_413} it follows that if $x,y\in \D(q_{\scriptscriptstyle\M})$, then $\pi_\vp(x)\mult \pi_\vp(y)$. Thus, by ({\sf wb}$_3$), there exists $z \in \A$ such that $\pi_\vp(x)\mult \pi_\vp(y) = \pi_\vp (z)$, for every $\vp \in \M$.
Then, for every $\vp \in \M$ and $a,b \in \Ao$,
\begin{align*}
\vp(ya,x\ha b) &= \ip{\pi_\vp(y)\lambda_\vp(a)}{\pi_\vp(x\ha)\lambda_\vp(b)}\\
&= \ip{\pi_\vp(x)\mult \pi_\vp(y)\lambda_\vp(a)}{\lambda_\vp(b)}\\
&= \ip{\pi_\vp(z)\lambda_\vp(a)}{\lambda_\vp(b)}\\
&= \vp(za,b).
\end{align*}
Hence $xy$ is well-defined and, by Proposition \ref{prop_412}, $x \bullet y$ is also well-defined.
Since $q_{\scriptscriptstyle\M}$ is a C*-norm on $\D(q_{\scriptscriptstyle\M})$, we need only to prove the completeness of $\D(q_{\scriptscriptstyle\M})$ to get the result.

Let $\{x_n\}$ be a Cauchy sequence with respect to the norm $q_{\scriptscriptstyle\M}$. Then
$\{x_n\ha\}$ is Cauchy too. Hence,  by  ({\sf wb}$_2$), for every $\vp \in \M$ and $a\in \Ao$ we have
$$
 \vp( (x_n- x_m)a, (x_n- x_m)a) \to 0, \; \mbox{as } n,m \to \infty
$$
and
$$
 \vp( (x_n\ha- x_m\ha)a, (x_n\ha- x_m\ha)a) \to 0, \; \mbox{as } n,m \to \infty.
$$
Therefore, $\{x_n\}$ is Cauchy also  with respect to $\tau^{\scriptscriptstyle\M} _{s^*}$. Then, by ({\sf wb}$_4$), there exists $x\in \A$ such that $x_n \stackrel{\tau^{\scriptscriptstyle\M} _{s^*}}{\to} x$.
Since
$$
 \vp(xa,xa)=\lim_{n \to \infty} \vp(x_na, x_na)\leq \limsup_{n \to \infty} q_{\scriptscriptstyle\M}(x_n)^2 \vp(a,a)
 $$
and $\limsup_{n \to \infty} q_{\scriptscriptstyle\M}(x_n)^2<\infty$ (by the boundedness of the sequence $\{q_{\scriptscriptstyle\M}(x_n)\}$), we conclude that $x$ is $\M$-bounded.
Finally, by the Cauchy condition, for every $\epsilon >0$, there exists $n_\epsilon \in {\mb N}$ such that, for every $n,m > n_\epsilon$, $q_{\scriptscriptstyle\M}(x_n-x_m) <\epsilon$. This implies that
$$
\vp( (x_n- x_m)a, (x_n- x_m)a) <\epsilon \vp(a,a), \quad \forall \vp \in \M, \, a \in \Ao.
$$
Then if we fix $n> n_\epsilon$ and let $m\to \infty$, we obtain
$$
 \vp( (x_n- x)a, (x_n- x)a) \leq \epsilon \vp(a,a), \quad \forall \vp \in \M, \, a \in \Ao.
$$
This, in turn, implies that $q_{\scriptscriptstyle\M}(x_n-x)\leq \epsilon$. This completes the proof.
\end{proof}

Let us now introduce the usual spectral elements adapted to the present situation.
\bedefi Let $x\in \A$.  The \emph{resolvent} $\rho^{\scriptscriptstyle\M} (x)$ of $x$ is defined by
$$
 \rho^{\scriptscriptstyle\M}(x) :=\left\{ \lambda\in {\mb C}: (x-\lambda e)^{-1} \mbox{exists in } \D(q_{\scriptscriptstyle\M}) \right\}.
$$
 The  corresponding \emph{spectrum} of $x$ is defined as $\sigma^{\scriptscriptstyle\M}(x):={\mb C}\setminus \rho^{\scriptscriptstyle\M}(x)$.
 \findefi

In similar way as in \cite{ctbound1} it can be proved that, if $\M$ is well-behaved, (a) $\rho^{\scriptscriptstyle\M}(x)$ is an open subset of the complex plane; (b) the map $\lambda \in \rho^{\scriptscriptstyle\M}(x)\mapsto (x-\lambda e)^{-1}\in \D(q_{\scriptscriptstyle\M})$ is analytic in each connected component of $\rho^{\scriptscriptstyle\M}(x)$; (c) $\sigma^{\scriptscriptstyle\M}(x)$ is nonempty.

As usual, we define the {\em spectral radius} of $x \in \A$ by
$$
r^{\scriptscriptstyle\M}(x):= \sup\{ |\lambda| : \, \lambda \in \sigma^{\scriptscriptstyle\M}(x)\}.
$$

\betheo Assume that $\M$ is well-behaved and let $x \in \A$. Then $r^{\scriptscriptstyle\M}(x)<\infty$ if and only if $x \in \D(q_{\scriptscriptstyle\M})$.
\entheo

\begin{proof}If $x \in \D(q_{\scriptscriptstyle\M})$, then $\sigma^{\scriptscriptstyle\M}(x)$ coincides with the spectrum of $x$ as an element of the C*-algebra $\D(q_{\scriptscriptstyle\M})$ and so $\sigma^{\scriptscriptstyle\M}(x)$ is compact.
Conversely, assume that $r^{\scriptscriptstyle\M}(x)<\infty$. Then the function $\lambda \mapsto
(x-\lambda e)^{-1}$ is $q_{\scriptscriptstyle\M}$-analytic  in the region
$|\lambda|> r^{\scriptscriptstyle\M}(x)$. Therefore it has there a $q_{\scriptscriptstyle\M}$-convergent Laurent expansion
$$
(x-\lambda e)^{-1} = \sum_{k=1}^\infty \frac{a_k}{\lambda^k},\quad |\lambda|> r^{\scriptscriptstyle\M}(x),
$$
 with $a_k \in \D(q_{\scriptscriptstyle\M})$  for each $k \in{\mb N}$. As usual
$$
 a_k = \frac{1}{2\pi i} \int_\gamma \frac{(x-\lambda e)^{-1}}{\lambda^{-k +1}} d\lambda, \qquad k \in {\mb N},
  $$
 where
$\gamma:=\{\lambda \in {\mb C}: |\lambda=R:\, R> r^{\scriptscriptstyle\M}(x)\}$
and the integral on the r.h.s. is meant to converge with respect to $q_{\scriptscriptstyle\M}$.

For every $\vp \in \M$ and $b,b' \in \Ao$, we have
\begin{eqnarray*}
\vp(a_k b, x\ha b') &=& \frac{1}{2\pi i} \int_\gamma \frac{\vp((x-\lambda e)^{-1}b, x\ha b')}{\lambda^{-k +1}} d\lambda
\\ &= &\frac{1}{2\pi i} \int_\gamma \frac{\vp((x-\lambda e)^{-1}b, (x\ha -\overline{\lambda}e) b')}{\lambda^{-k +1}} d\lambda
\\ &  & \;\; +\frac{1}{2\pi i}\int_\gamma \frac{\vp((x-\lambda e)^{-1}b, \overline{\lambda} b')}{\lambda^{-k +1}} d\lambda
\\&=& \frac{1}{2\pi i}\int_\gamma \frac{\vp(b,  b')}{\lambda^{-k +1}} d\lambda + \frac{1}{2\pi i} \int_\gamma \frac{\vp((x-\lambda
e)^{-1}b,  b')}{\lambda^{-k }} d\lambda
 \\& =&
\vp( a_{k+1}b, b').
 \end{eqnarray*}
 This implies that $xa_k$ is well defined, for every $k\in {\mb N}$ and $xa_k= a_{k+1}$.

 In particular,
\begin{eqnarray*}\vp(a_1 b, x\ha b') &=& \frac{1}{2\pi i} \int_\gamma \vp((x-\lambda
e)^{-1}b, x\ha b') d\lambda \\
&=& \frac{1}{2\pi i} \vp \left( \left( \int_\gamma (x-\lambda
e)^{-1} d\lambda\right) b, x\ha b'\right)\\
&=& \frac{1}{2\pi i}\vp(-b, x\ha b').
\end{eqnarray*}
Hence $xa_1=-x$. Thus finally $x=-a_2 \in \D(q_{\scriptscriptstyle\M})$.
\end{proof}

In our previous paper \cite{antratsc}, we have introduced a notion of strong inverse based on the multiplication obtained by closure,
and this has allowed us to derive a number of spectral properties. Now the notion of  strong multiplication $\bullet$ defined here (Definition \ref{strongprod2}) allows to obtain similar results. In particular, Proposition 4.13 of \cite{antratsc} may be generalized as follows.
\begin{prop}
Assume that $\A$ is is topologically regular over $\Ao$ and let $x \in \A$. Then, every $\lambda \in {\mb C}$
such that $|\lambda|>q_{\scriptscriptstyle\M}(x)$ belongs to $\rho^{\scriptscriptstyle\M}(x)$.
\end{prop}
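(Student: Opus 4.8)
The plan is to read the statement as the familiar Banach-algebraic estimate ``spectral radius does not exceed the norm'', transported into the present partial setting, and to realize the resolvent by a Neumann series. First I would dispose of the trivial case: if $x$ is not $\M$-bounded then $q_{\scriptscriptstyle\M}(x)=+\infty$, the condition $|\lambda|>q_{\scriptscriptstyle\M}(x)$ is vacuous, and there is nothing to prove. So I may assume $x\in\D(q_{\scriptscriptstyle\M})$. Since $\varphi(ea,ea)=\varphi(a,a)$ for every $\varphi\in\M$ and $a\in\Ao$, the unit $e$ is $\M$-bounded with $q_{\scriptscriptstyle\M}(e)=1$, whence $x-\lambda e\in\D(q_{\scriptscriptstyle\M})$; the goal is to produce a strong inverse $w\in\D(q_{\scriptscriptstyle\M})$ with $(x-\lambda e)\bullet w=w\bullet(x-\lambda e)=e$, which is exactly $\lambda\in\rho^{\scriptscriptstyle\M}(x)$.

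The candidate is $w:=-\sum_{n=0}^{\infty}\lambda^{-(n+1)}x^{\bullet n}$, where $x^{\bullet 0}=e$ and $x^{\bullet n}$ is the $n$-th strong power. The first task is to check that these powers exist and remain $\M$-bounded. By Proposition \ref{prop_412} the product $x\bullet x$ is well-defined as soon as the weak product is, and by Proposition \ref{prop_algebra}(ii) it is then $\M$-bounded with $q_{\scriptscriptstyle\M}(x\bullet x)\le q_{\scriptscriptstyle\M}(x)^{2}$; iterating, and using the mixed associativity furnished by topological regularity (Proposition \ref{assoc3abs}) to re-associate the powers, I would obtain by induction that each $x^{\bullet n}$ is a well-defined $\M$-bounded element with $q_{\scriptscriptstyle\M}(x^{\bullet n})\le q_{\scriptscriptstyle\M}(x)^{n}$. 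Because $|\lambda|>q_{\scriptscriptstyle\M}(x)$, the tail bound $q_{\scriptscriptstyle\M}\!\bigl(\lambda^{-(n+1)}x^{\bullet n}\bigr)\le |\lambda|^{-1}\bigl(q_{\scriptscriptstyle\M}(x)/|\lambda|\bigr)^{n}$ shows that the partial sums $w_N:=-\sum_{n=0}^{N}\lambda^{-(n+1)}x^{\bullet n}$ are $q_{\scriptscriptstyle\M}$-Cauchy, so, by the completeness of $\D(q_{\scriptscriptstyle\M})$ in the $C^{*}$-norm $q_{\scriptscriptstyle\M}$ established above, they converge to some $w\in\D(q_{\scriptscriptstyle\M})$.

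It then remains to identify $w$ as the strong inverse. For each $N$ the telescoping identity $(x-\lambda e)\bullet w_{N}=e-\lambda^{-(N+1)}x^{\bullet(N+1)}$, and the analogous $w_N\bullet(x-\lambda e)=e-\lambda^{-(N+1)}x^{\bullet(N+1)}$, are finite, purely associative computations legitimized by Proposition \ref{assoc3abs}. Since $q_{\scriptscriptstyle\M}\!\bigl(\lambda^{-(N+1)}x^{\bullet(N+1)}\bigr)\to 0$ and, by submultiplicativity (Proposition \ref{prop_algebra}(ii)), $q_{\scriptscriptstyle\M}\!\bigl((x-\lambda e)\bullet(w-w_N)\bigr)\le q_{\scriptscriptstyle\M}(x-\lambda e)\,q_{\scriptscriptstyle\M}(w-w_N)\to 0$, I may pass to the limit and conclude $(x-\lambda e)\bullet w=w\bullet(x-\lambda e)=e$. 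Thus $(x-\lambda e)^{-1}=w$ exists in $\D(q_{\scriptscriptstyle\M})$, giving $\lambda\in\rho^{\scriptscriptstyle\M}(x)$.

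The hard part, and the step I would treat most carefully, is the passage from the merely \emph{partial} multiplication $\bullet$ to the honest, everywhere-defined, associative $C^{*}$-algebra computations used above: one must guarantee that every product that occurs — each power $x^{\bullet n}$ and each $(x-\lambda e)\bullet w_N$, $w_N\bullet(x-\lambda e)$ — is genuinely well-defined, that the telescoping re-associations are valid, and that the limit may be taken inside $\bullet$. This is precisely where topological regularity over $\Ao$ is used, via Proposition \ref{assoc3abs} and Proposition \ref{prop_412}, together with the completeness already proved, which upgrade $\D(q_{\scriptscriptstyle\M})$ from a partial structure to a complete unital $C^{*}$-algebra under $\bullet$. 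Once that totality and associativity are secured, the proposition is just the standard Banach-algebra fact that the spectrum is contained in the closed disc of radius $q_{\scriptscriptstyle\M}(x)$, and the Neumann series above is merely its explicit realization.
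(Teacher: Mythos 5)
Your strategy --- a Neumann series summed inside $\D(q_{\scriptscriptstyle\M})$ --- is not the paper's, and under the stated hypotheses it has two genuine gaps, both traceable to the same oversight: the proposition assumes only that $\A$ is topologically regular over $\Ao$, \emph{not} that the family $\M$ is well-behaved in the sense of Definition \ref{def:wbehaved}. First, your series needs the strong powers $x^{\bullet n}$, and by Proposition \ref{prop_412} these exist only if the corresponding \emph{weak} products exist; but in a partial *-algebra nothing forces $x\in R(x)$, and for two $\M$-bounded elements the weak product may simply fail to exist in $\A$. Proposition \ref{prop_413} produces the product only at the level of each GNS representation $\pi_\vp$, and the remark following it warns explicitly that this does \emph{not} yield an element $z\in\A$ with $\pi_\vp(x)\mult\pi_\vp(y)=\pi_\vp(z)$ --- supplying such a $z$ is precisely what condition ({\sf wb}$_3$) was introduced for. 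Your phrase ``as soon as the weak product is'' concedes this point but never discharges it, and neither Proposition \ref{assoc3abs} nor Proposition \ref{prop_412} gives totality of the multiplication on $\D(q_{\scriptscriptstyle\M})$, contrary to what your closing paragraph asserts. Second, the completeness of $\D(q_{\scriptscriptstyle\M})$ in the norm $q_{\scriptscriptstyle\M}$, which you invoke to sum the series and to pass to the limit in $\bullet$, is established in the paper only for well-behaved $\M$: its proof uses ({\sf wb}$_2$) and the $\tau^{\scriptscriptstyle\M}_{s^*}$-completeness ({\sf wb}$_4$). Topological regularity alone provides neither. In effect you have proved the proposition under the additional hypothesis that $\M$ is well-behaved --- where, as you say, it reduces to the standard fact that the spectrum in a unital C*-algebra lies in the closed ball of radius equal to the norm --- but not the statement as given.

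The paper's proof avoids both obstacles by never multiplying inside the partial multiplication of $\A$ at all. It works with the strong inverse \emph{by closure} (the multiplication $\qsmult$ induced by $\Ao$, which topological regularity makes available): the existence of such an inverse of $x-\lambda e$ for $|\lambda|>q_{\scriptscriptstyle\M}(x)$, together with its left-boundedness, is imported from Proposition 4.13 of \cite{antratsc}, using that $q_{\scriptscriptstyle\M}$ coincides with the norm $\nlb{\cdot}$ of left $\tau^{\scriptscriptstyle\M}_s$-bounded elements. The actual new content of the paper's argument is a bridge lemma: any closure-based strong inverse $x^{-1}$ is automatically a strong inverse for $\bullet$, verified by taking a net $\{w_\alpha\}\subset\Ao$ with $w_\alpha \to x^{-1}$ and $xw_\alpha \to e$ in $\tau^{\scriptscriptstyle\M}_s$ and checking the analogues of ({\sf sm}$_1$) and ({\sf sm}$_2$) via the continuity of each $\vp\in\M$ and ({\sf ips}$_4$). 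To salvage your argument you would have to either add well-behavedness to the hypotheses, or rebuild your Neumann series inside the closure multiplication $\qsmult$ rather than inside $\bullet$ and then apply the paper's bridge step.
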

\begin{proof}
Let $x^{-1}$ be the strong inverse by closure of $x\in\A$ so that $x^{-1}\in L(x) \cap R(x)$.
Of course, we may assume that $x \in \D(q_{\scriptscriptstyle\M})$.
Then the following analogue of  ({\sf sm}$_1$) holds true:
\begin{align*}
\vp((xx^{-1})a,z^*b)&=\vp(a,z^*b) = \vp(x^{-1}a,(x^*z^*)b), \\
&\hspace*{2cm} \forall \,z \in L(x),\forall\vp\in{\M}, \forall\, a,b\in\Ao.
\end{align*}
Let indeed $x^{-1}\in D(\ZOL_x)$. Then there exists a net  $\{w_{\alpha}\}\subset \Ao$ such that
$w_{\alpha}\stackrel{{\tau}_{s}^{\scriptscriptstyle\M}}{\to}x^{-1}, \;
 xw_{\alpha} \stackrel{{\tau}_{s}^{\scriptscriptstyle\M}}{\to} e$.
Then, using the the continuity of  $\vp\in\M$, that of the multiplication by $\Ao$, and ({\sf ips}$_4$), we have:
$$
\vp(x^{-1}a,(x^*z^*)b) = \lim_ \alpha \vp(w_{\alpha} a,(x^*z^*)b)=  \lim_ \alpha \vp((xw_{\alpha}) a,z^*b) = \vp(a,z^*b).
$$
In the same way, one proves the following following analogue of  ({\sf sm}$_2$)
$$
\vp(({x^{-1}}^*x^*)a,vb)=\vp(x^*a,(x^{-1}v)b), \,\forall\, v \in R(y),\forall\, \vp\in{\M},\forall\, a,b\in\Ao.
$$
Since $x\bullet x^{-1} = x^{-1}\bullet x = e$, one shows in the same way, for $x\in D(\ZOL_{x^{-1}})$, that
\begin{align*}
\vp((x^{-1}x)a,z^*b)&=\vp(a,z^*b) = \vp(x^{-1}a,({x^{-1}}^*z^*)b), \\
&\hspace*{2cm} \forall \,z \in L(x),\forall\vp\in{\M}, \forall\, a,b\in\Ao,
\end{align*}
and
$$
\vp((x^*{x^{-1}}^*)a,vb)=\vp({x^{-1}}^*a,xvb), \,\forall\, v \in R(y),\forall\, \vp\in{\M},\forall\, a,b\in\Ao.
$$
Thus we have proved that if $x^{-1}$ is the strong inverse by closure of $x\in\A$, as defined in \cite{antratsc},
then  $x^{-1}$ is also the strong inverse with respect
to the strong multiplication $\bullet$ (the converse is not true in general).

Combining this fact with Proposition 4.13 of \cite{antratsc}, we can conclude that $(x-\lambda e)^{-1}$ exists as a strong inverse,
which proves the statement.
\end{proof}

\berem The previous Proposition implies that, for every $x\in \A$, $r^{\scriptscriptstyle\M}(x) \leq q_{\scriptscriptstyle\M}(x)$, for every choice of the sufficient family $\M$. Clearly, if $x\not\in \D(q_{\scriptscriptstyle\M})$, then both $r^{\scriptscriptstyle\M}(x)$ and $q_{\scriptscriptstyle\M}(x)$ are infinite.
\enrem

\section{Existence of faithful representations}
\label{sec:represent}

The lesson of Theorem \ref{thm_417} is essentially that the notion of $\M$-bounded element given above is reasonable: as for the case of locally convex *-algebras, a good notion of boundedness of an element is equivalent to the boundedness of the operators representing it. This definition will be even more   significant if the locally convex partial *-algebra under consideration possesses \emph{sufficiently many} *-representations. This fact is expressed, in the case of locally convex *-algebras, through the notion of \emph{*-semisimplicity} which we will extend to locally convex partial *-algebras in a natural way.

A *-representation of a partial *-algebra $\A$ is a *-homomorphism  $\pi: \A\rightarrow\LDH$. If $\A[\tau]$ is $\Ao$-regular, then, by definition, it has a $\tau^*$-dense
distinguished *-algebra $\Ao$. Clearly, $\pi(\Ao)$ is a *-algebra of operators, but in general $\pi(\Ao) \not\subset \LD$.
However, we can always guarantee this property by changing the domain. Indeed:

\begin{prop} \label{indextension} Let $\A$ be an $\Ao$-regular \pa\
and let $\pi$  be a *-representation of $\A$ with domain $\D$ in $\H$. Put
$$\dd_1:=\left\{\xi_0+\sum_{i=1}^n\pi(b_i)\xi_i,\quad b_i\in\Ao,\xi_0, \ldots \xi_n \in\dd; n \in {\mb N}\right\}$$ and define
$$
\pi_1(a)\left(\xi_0+\sum_{i=1}^n\pi(b_i)\xi_i\right):=\pi(a)\xi_0 + \sum_{i=1}^n\pi(a)\mult\pi(b_i)\xi_i.
$$
Then $\pi_1$ is a *-representation of $\A$ with domain $\D_1\supset \D$ and $\pi(\Ao)  \subset \L\ad(\D_1)$.
\end{prop}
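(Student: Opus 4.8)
The plan is to check, in order, that $\D_1$ is a subspace with $\D\subset\D_1$, that each $\pi_1(a)$ is a well-defined linear operator, that $\pi_1(\Ao)\subset\L\ad(\D_1)$, and that $a\mapsto\pi_1(a)$ obeys the two axioms of a $*$-representation. A preliminary observation makes the defining formula meaningful: since each $b_i\in\Ao\subset R\A$, every $a\in\A$ is a left multiplier of $b_i$, so by axiom (ii) for $\pi$ the product $\pi(a)\mult\pi(b_i)=\pi(ab_i)$ is automatically defined. Taking $n=0$ shows $\D\subset\D_1$, and $\D_1$ is plainly a subspace.

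The first real step is well-definedness of $\pi_1(a)$, i.e. that $\pi_1(a)\eta$ depends only on $\eta=\xi_0+\sum_i\pi(b_i)\xi_i$ and not on the chosen decomposition. I would test against $\D$: for $\zeta\in\D$, using $\pi(ab_i)\xi_i=\pi(a\ha)\ha\,\pi(b_i)\xi_i$ (the definition of the weak product, together with $\pi(a)\ad=\pi(a\ha)$) and the fact that $\zeta\in\D\subset D(\pi(a\ha))$ permits moving the adjoint back, one gets
\[
\ip{\pi_1(a)\eta}{\zeta}=\ip{\xi_0}{\pi(a\ha)\zeta}+\sum_i\ip{\pi(b_i)\xi_i}{\pi(a\ha)\zeta}=\ip{\eta}{\pi(a\ha)\zeta}.
\]
The right-hand side depends only on $\eta$, and $\D$ is dense, so $\pi_1(a)\eta$ is uniquely determined; linearity of $\pi_1(a)$ and of $a\mapsto\pi_1(a)$ is then immediate. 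The same identity yields $\D\subset D(\pi_1(a)\ha)$ with $\pi_1(a)\ha\upharpoonright\D=\pi(a\ha)$, so $\pi_1(a)$ is closable; and applying it with $a\ha$ in place of $a$ gives the adjoint relation $\ip{\pi_1(a)\eta}{\eta'}=\ip{\eta}{\pi_1(a\ha)\eta'}$ whenever at least one of $\eta,\eta'$ lies in $\D$.

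The inclusion $\pi_1(\Ao)\subset\L\ad(\D_1)$ is then clean: for $b\in\Ao$ one computes
\[
\pi_1(b)\Bigl(\xi_0+\sum_i\pi(b_i)\xi_i\Bigr)=\pi(b)\xi_0+\sum_i\pi(bb_i)\xi_i,
\]
and since $\pi(b)\xi_0$ is itself an admissible generator and $bb_i\in\Ao$, this lies in $\D_1$; the same holds for $\pi_1(b\ha)$, so $\pi_1(b)$ maps $\D_1$ into $\D_1$ along with its adjoint. Granting the full adjoint relation, axiom (ii) for $\pi_1$ (that $a\in L(a')$ forces $\pi_1(a)\mult\pi_1(a')=\pi_1(aa')$ on $\D_1$) follows by evaluating on generators and reducing, through \eqref{altwp}, to the already available identity $\pi(a)\mult\pi(a')=\pi(aa')$.

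The step I expect to be the main obstacle is completing axiom (i), namely $\D_1\subset D(\pi_1(a)\ha)$ and $\pi_1(a)\ad=\pi_1(a\ha)$, in the sole remaining case where both arguments lie outside $\D$, say $\eta=\pi(b)\xi$ and $\eta'=\pi(c)\zeta$ with $b,c\in\Ao$. There the required equality reduces to
\[
\ip{\pi(a\ha)\ha\,\pi(b)\xi}{\pi(c)\zeta}=\ip{\pi(b)\xi}{\pi(a)\ha\,\pi(c)\zeta},
\]
that is, to pushing the Hilbert adjoints $\pi(a\ha)\ha$ and $\pi(a)\ha$ past one another on vectors that need not belong to $\D$. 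This is genuinely delicate, since $\pi(a)\ha$ is in general a proper extension of $\overline{\pi(a\ha)}$, so the equality is not a formal consequence of the adjoint relation; it amounts to showing that $\pi(c)\zeta$ lies in $D(\overline{\pi(a\ha)})$. I would attack it not through the bare adjoints but through the weak-multiplier structure: because $c\in\Ao\subset R\A$ is a universal right multiplier, both $\pi(a\ha)\mult\pi(c)$ and $\pi(ab)\mult\pi(c)$ are defined, and feeding these into \eqref{altwp} expresses the two sides via the genuine products $\pi(a\ha c)$ and $\pi((ab)c)$, where the domains are under control; carrying this identification through is where the real work lies.
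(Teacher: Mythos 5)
Your preparatory steps are correct, and your well-definedness argument is in fact cleaner than the paper's own: by testing $\pi_1(a)\eta$ only against $\zeta\in\D$, the identity $\ip{\pi_1(a)\eta}{\zeta}=\ip{\eta}{\pi(a\ha)\zeta}$ follows directly from \eqref{altwp}, whereas the paper tests against all of $\D_1$ and already needs its full chain of manipulations at that stage. But the proposal does not prove the proposition: it stops exactly where the substance lies. The adjoint identity $\ip{\pi_1(a)\eta}{\eta'}=\ip{\eta}{\pi_1(a\ha)\eta'}$ for $\eta=\pi(b)\xi$, $\eta'=\pi(c)\zeta$ both outside $\D$ is deferred (``carrying this identification through is where the real work lies''), and axiom (ii) is asserted only ``granting the full adjoint relation'', with the claimed reduction left unexamined. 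Note that without that identity you do not even know $\D_1\subseteq D(\pi_1(a)\x)$, i.e.\ that $\pi_1$ maps into $\LDHO{\D_1}{\H}$ at all; likewise your claim that the adjoint of $\pi_1(b)$, $b\in\Ao$, maps $\D_1$ into $\D_1$ presupposes $\pi_1(b)\ad=\pi_1(b\ha)$ on $\D_1$, which is the same deferred statement.

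Moreover, the attack you sketch, if carried out, lands on a point you have not identified. Since $\Ao$ is a *-subalgebra of $R\A$, one has $\Ao\subset R\A\cap L\A$, so both $b\ha a\ha\in L(c)$ and $b\ha\in L(a\ha c)$; applying \eqref{altwp} (and axiom (ii) for $\pi$) to each side as you propose gives
\[
\ip{\pi(ab)\xi}{\pi(c)\zeta}=\ip{\xi}{\pi((b\ha a\ha)c)\zeta},
\qquad
\ip{\pi(b)\xi}{\pi(a\ha c)\zeta}=\ip{\xi}{\pi(b\ha (a\ha c))\zeta}.
\]
Hence the equality you need is $(b\ha a\ha)c=b\ha(a\ha c)$ in $\A$ --- an associativity law which is not an axiom of partial *-algebras (the paper stresses they are not associative in general) and is not part of $\Ao$-regularity; it is an instance of semi-associativity. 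This relation is precisely the pivot of the paper's own computation, namely its step $\pi((b_i\ha a\ha)c_j)=\pi(b_i\ha)\mult\pi(a\ha c_j)$, and the paper's verification of multiplicativity similarly hinges on checking $c_j\ha((a_1a_2)b_i)=(c_j\ha(a_1a_2))b_i$ together with a mixed associativity of weak products in $\LDH$ (which is not semi-associative, so this is nontrivial). In short, the ``real work'' you set aside is not a routine domain-chase but the entire mathematical content of the proposition, and as written your proposal establishes only well-definedness, closability, and the invariance of $\D_1$ under $\pi_1(\Ao)$, not that $\pi_1$ is a *-representation.
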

The proof of this proposition is given in Appendix A. Thus we can conclude that it is not restrictive to suppose that $\pi(\Ao)\subset\LD$.
\medskip

Now we can state the result announced at the end of Section \ref{sec:pmult-ips}.
\betheo Let $\A$ be an $\Ao$-regular \pa, with a sufficient family $\M$ of ips-forms, in particular, a partial GC*-algebra. Then:

(i) $\A$ has a faithful,  $(\tau,{\sf t}_{s})$-continuous representation into a partial GC*-algebra of operators.

(ii) Assume, in addition, that the family $\M$ is well-behaved.
Then $\A$ has a faithful,  $(\tau,{\sf t}_{s})$-continuous representation \emph{onto} a partial GC*-algebra of operators.
\entheo

\begin{proof}
(i) For every $\vp\in\M$ , let $(\pi_\vp, \lambda_\vp, \H_\vp)$ be the corresponding GNS  construction. Define, as usual,
$\H := \oplus_{\vp\in\M} \H_\vp$ and
$$
\D(\pi):= \{\xi = \big( \lambda_\vp(a) \big),\, a\in\Ao : \sum_{\vp\in\M} \norm{\pi_\vp(x)\lambda_\vp(a)}^2 < \infty, \,\forall\, x\in\A\}.
$$
Then, putting
$$
\pi(x)\xi:=  \big(\pi_\vp(x)\lambda_\vp(a)\big),\, a\in \Ao,
$$
one defines a faithful representation of $\A$.

Taking into account the continuity of  $\vp\in\M$ and of the multiplication by $\Ao$, we have:
$$
\norm{\pi_\vp(x)\lambda_\vp(a)}^2 = \vp(xa,xa) \leq p(xa)^2 \leq  p'(xa),
$$
for some $\tau$-continuous seminorms $p,p'$. This implies that $\pi$ is  $(\tau,{\sf t}_{s})$-continuous. So, by Theorem \ref{thm_417},
if $x\in \D(q_{\scriptscriptstyle\M}), \, \pi(x)$ is bounded and one finds by a direct check that
$$
\norm{\overline{\pi(x)}}\leq q_{\scriptscriptstyle\M}(x).
$$

(ii) Let now the family $\M$ be well-behaved. Then $ \D(q_{\scriptscriptstyle\M})$ is a C$\ha$-algebra and, hence,
$$
\norm{\overline{\pi(x)}} = q_{\scriptscriptstyle\M}(x), \; \forall\, x\in  \D(q_{\scriptscriptstyle\M}),
$$
and  $\pi( \D(q_{\scriptscriptstyle\M}))$ is a C$\ha$-algebra.

Moreover, if $\Ao \subset\D(q_{\scriptscriptstyle\M}) $, then $\D(q_{\scriptscriptstyle\M})$ is $\tau^*$-dense in $\A$.
Hence, if $x\in\A$, there exists a net $\{x_\alpha\}\subset \Ao$ such that $x_\alpha \stackrel{\tau^*}{\to} x$. This implies that
$x_\alpha \stackrel{\tau}{\to} x$ and $x_\alpha\ha \stackrel{\tau}{\to} x\ha$.

Then, since $\pi$ is  $(\tau,{\sf t}_{s})$-continuous, we have that $\pi(x_\alpha)\xi \to \pi(x)\xi$ and $\pi(x_\alpha\ha)\xi \to \pi(x\ha)\xi,
\, \forall\, \xi\in \D(\pi)$. This implies that $\pi(x_\alpha)\xi  \stackrel{{\sf t}_{s^\ast}}{\to} \pi(x)\xi$.
Hence, $\pi(\D(q_{\scriptscriptstyle\M}))$ is ${\sf t}_{s^\ast}$-dense in $\pi(\A)$.

The construction of  $\pi$ implies that  $\pi(\A)$ is a \pa. Assume indeed that $\pi(x)\mult\pi(y)$ is well-defined. Then
$\pi_\vp(x)\mult\pi_\vp(y)$ is well-defined, for every  $\vp\in\M$. Hence there exists a $z\in\A$ such that
$\pi_\vp(x)\mult\pi_\vp(y) = \pi_\vp(z)$. This in turn implies that $\pi(x)\mult\pi(y) = \pi(z)$.

In general, however, $\pi(\A)$ need not be complete with respect to ${\sf t}_{s^\ast}$. Assume that $\{\pi(x_\alpha)\}$ is a net in  $\pi(\A)$:
$$
\pi(x_\alpha) \stackrel{{\sf t}_{s^\ast}}{\to}Z\in {\L}\ad(\D(\pi),\H).
$$
Then, by the definition of $\pi$,
$$
\pi_\vp(x_\alpha) \stackrel{{\sf t}_{s^\ast}}{\to}Z_\vp, \; \mbox{ where } Z_\vp \xi_\vp = \big(Z(\xi_\vp)\big)_\vp.
$$
This implies that, for every $a\in\Ao$,
\begin{align*}
&\pi_\vp(x_\alpha)  \lambda_\vp(a) \to Z_\vp \lambda_\vp(a),
\\
&\pi_\vp(x_\alpha\ha)  \lambda_\vp(a) \to Z_\vp\ha \lambda_\vp(a).
\end{align*}
Hence
$$
\vp((x_\alpha -x_\beta)a, (x_\alpha -x_\beta)a) = \norm{\pi_\vp(x_\alpha -x_\beta)\lambda_\vp(a)}^2 \to 0,
$$
for $\alpha,\beta$ ``large" enough and every $a\in\Ao$. Similarly,
$$
\vp((x_\alpha\ha -x_\beta\ha)a, (x_\alpha\ha -x_\beta\ha)a) = \norm{\pi_\vp(x_\alpha\ha -x_\beta\ha)\lambda_\vp(a)}^2 \to 0,
$$
Since $\M$ is well-behaved, $\{x_\alpha\}$ is a $\tau_{s^*}^{\scriptscriptstyle\M}$-Cauchy net. Thus there exists $x\in\A$ such that
$$
\vp(x_\alpha-x, x_\alpha-x) \to 0, \; \forall\, \vp\in\M.
$$
By ({\sf wb}$_2$), it follows that
$$
\vp((x_\alpha-x)a, (x_\alpha-x)a) \to 0, \; \forall\, \vp\in\M, \, a\in \Ao.
$$
In conclusion, $\pi_\vp(x_\alpha)  \stackrel{{\sf t}_{s^\ast}}{\to}\pi_\vp(x), \,  \forall\, \vp\in\M$ and, hence,
 $\pi(x_\alpha)  \stackrel{{\sf t}_{s}}{\to}\pi(x)$. Thus $\pi(\A)$ is ${\sf t}_{s^\ast}$-closed and, hence,
 $\pi(\A)$ is ${\sf t}_{s^\ast}$-complete, if one remembers that $\LDH[{\sf t}_{s^\ast}]$ is complete.
 This concludes the proof.
\end{proof}

\medskip

For topological *-algebras the set of elements which belong to the intersection of the kernel of all continuous *-representations constitute
the so-called *-radical of $\A$ (see, e.g. \cite{bonsall, palmer}).

In a previous paper \cite{anttratschi_2}, we have introduced the notion of algebraic \mbox{*-radical}  and
the attending definition of an algebraically *-semisimple partial *-algebra.
In the present context, the presence of a sufficient family of continuous ips-forms  allows one to introduce similar concepts at the topological level as well. Thus the notion of *-radical has a natural extension to our case.

Let in fact $\A[\tau]$ be an $\Ao$-regular partial *-algebra. We define the \emph{*-radical} of $\A$ by:
$$
\R\ha(\A):=\{x\in\A:\, \pi(x)=0,\, \mbox{for all}\, (\tau,{\sf t}_{s^*})\mbox{-continuous *-representations}\,\, \pi\}
$$
We put $\R^*(\A):=\A$, if $\A[\tau]$ has no $(\tau,{\sf t}_{s^*})\mbox{-continuous *-representations}$.

\begin{prop}\label{prop_finalnew}
Let $\A[\tau]$ be an $\Ao$-regular partial *-algebra and $\P_{\Ao}(\A)$   the set of \emph{all} $\tau$-continuous ips-forms with core $\Ao$. For an element $x\in \A$ the following statements are equivalent.
\begin{itemize}
  \item[(i)] $x \in \R^*(\A)$.
  \item[(ii)] $\vp(x,x)=0$ for every $\vp \in \P_{\Ao}(\A)$.
  \item[(iii)] $x\ha x$ is well-defined and $x\ha x=0$.
\end{itemize}
\end{prop}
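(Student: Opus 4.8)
The plan is to prove the three statements equivalent by establishing the cycle (i)$\Rightarrow$(ii)$\Rightarrow$(iii)$\Rightarrow$(i), exploiting the GNS construction attached to each ips-form and the defining property of a sufficient family. The central observation is that for each $\vp \in \P_{\Ao}(\A)$, the GNS representation $\pi_\vp$ is, by the continuity estimate $|\vp(x,y)| \leq \gamma\, p_\alpha(x)p_\alpha(y)$, a $(\tau,{\sf t}_s)$-continuous (indeed $(\tau,{\sf t}_{s^*})$-continuous) $*$-representation of $\A$, and that $\vp(x,x) = \norm{\pi_\vp(x)\lambda_\vp(e)}^2$ since $\A$ has a unit $e \in \Ao$. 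This ties the purely form-theoretic condition (ii) to the representation-theoretic condition (i).

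First I would show (i)$\Rightarrow$(ii). Suppose $x \in \R^*(\A)$, so $\pi(x)=0$ for every $(\tau,{\sf t}_{s^*})$-continuous $*$-representation $\pi$. Fix $\vp \in \P_{\Ao}(\A)$; since its GNS representation $\pi_\vp$ is such a representation, we get $\pi_\vp(x)=0$, whence $\vp(x,x) = \norm{\pi_\vp(x)\lambda_\vp(e)}^2 = 0$. (One must check that the class of representations defining $\R^*$ is nonempty in the relevant case, but if $\A$ has no such representations then $\R^*(\A)=\A$ by convention, and separately every $\vp \in \P_{\Ao}(\A)$ would have to be trivial, so (ii) holds vacuously; I would address this degenerate branch explicitly.)

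Next, (ii)$\Rightarrow$(iii). Assume $\vp(x,x)=0$ for all $\vp \in \P_{\Ao}(\A)$. I would first argue that the weak product $x\ha x$ exists and equals $0$. Using Proposition \ref{3.2}'s abstract analogue --- the definition of $x^* \wmult x$ via ips-forms --- I would verify that $z=0$ satisfies the defining identity $\vp(xa, xb) = \vp(0\cdot a, b) = 0$ for all $a,b \in \Ao$ and $\vp \in \P_{\Ao}(\A)$. Indeed, by Cauchy--Schwarz in the form \eqref{2.2}, $|\vp(xa, xb)|^2 \leq \vp(xa,xa)\,\vp(xb,xb)$; and for each fixed $a \in \Ao$ the form $\vp_a(u,v):=\vp(ua,va)$ again lies in $\P_{\Ao}(\A)$ (it inherits continuity from the continuity of right multiplication by $a$ in an $\Ao$-regular algebra, together with ({\sf ips}$_3$)--({\sf ips}$_4$)), so the hypothesis applied to $\vp_a$ gives $\vp(xa,xa)=\vp_a(x,x)=0$. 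Hence $\vp(xa,xb)=0$ for all $a,b$, which is precisely the condition (with ({\sf ips}$_3$), $\vp(xa,x^*{}^*b) = \vp((x^*)^* a, \ldots)$ rewritten) that $x^* \wmult x$ is well-defined with value $0$; by condition {\sf (wp)} this is the genuine product $x^* x = 0$.

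Finally, (iii)$\Rightarrow$(i). If $x\ha x = 0$, then for \emph{every} $(\tau,{\sf t}_{s^*})$-continuous $*$-representation $\pi$ we have $\pi(x)\ad \mult \pi(x) = \pi(x^* x) = \pi(0) = 0$, so for all $\xi \in \D(\pi)$, $\norm{\pi(x)\xi}^2 = \ip{\pi(x)\ad \mult \pi(x)\,\xi}{\xi} = 0$, giving $\pi(x)=0$; thus $x \in \R^*(\A)$. The main obstacle I anticipate is the careful bookkeeping in (ii)$\Rightarrow$(iii): one must confirm that $\vp_a \in \P_{\Ao}(\A)$ (stability of the maximal core-$\Ao$ family under the substitution $\vp \mapsto \vp_a$, which relies on $\Ao$-regularity and is the abstract analogue of condition ({\sf wb}$_2$)) and that the vanishing of all the numbers $\vp(xa,xa)$ really forces the weak product to be defined and zero, rather than merely constraining it. Once that stability and the identification via {\sf (wp)} are in hand, the three implications close up cleanly.
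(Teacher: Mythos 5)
Your proposal is correct and takes essentially the same route as the paper's own proof: the identical cycle (i)$\Rightarrow$(ii)$\Rightarrow$(iii)$\Rightarrow$(i), with (ii)$\Rightarrow$(iii) resting on the stability $\vp_a\in\P_{\Ao}(\A)$, Cauchy--Schwarz, and condition {\sf (wp)}, and (iii)$\Rightarrow$(i) on the computation $\|\pi(x)\xi\|^2=\ip{\pi(x\ha x)\xi}{\xi}=0$. The only differences are cosmetic: you prove (i)$\Rightarrow$(ii) directly rather than by contraposition, and your parenthetical claim that $\pi_\vp$ is $(\tau,{\sf t}_{s^*})$-continuous glosses the same point the paper handles by asserting $(\tau^*,{\sf t}_{s^*})$-continuity of the GNS representation.
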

\begin{proof}
(i)$\Rightarrow$(ii): Assume that, for all $x\in\A, x\neq 0$, there exists a continuous ips-form with core $\Ao$, such that $\varphi(x,x)>0$.
Let $(\pi_\varphi,\hh_\varphi,\lambda_\varphi)$
 be the corresponding GNS construction. The GNS *-representation is $(\tau^*,{\sf t}_{s^*})$-continuous. Indeed, if $a\in\Ao$, we have:
$$
\|\pi_\varphi(x)\lambda_\varphi(a)\|^2=\varphi(xa,xa)\leq\gamma^2p_\alpha^*(xa)\leq\gamma 'p_\beta^*(x)\,.
$$
On the other hand,
$$\|\pi_\varphi(x^*)\lambda_\varphi(a)\|^2=\varphi(x^*a,x^*a)\leq\gamma^2p_\alpha^*(x^*a)\leq\gamma ''p_\beta^*(x^*)=\gamma'''p_\beta^*(x).$$
Finally,
$\|\pi_\varphi(x)\lambda_\varphi(e)\|^2=\varphi(x,x)>0,$
and this implies $\pi_\varphi(x)\neq 0$.

(ii)$\Rightarrow$(iii) Assume that $\varphi(x,x)=0$ for all $\varphi\in\P_{\Ao}(\A)$. For $a\in\Ao$,  $\varphi_a(x,x)=0$, since as it is easy to see $\varphi_a\in\P_{\Ao}(\A)$. By the Cauchy-Schwarz inequality, it follows that $\varphi(xa,xb)=0$, for all $a,b\in\Ao$. By
{\sf (wp)}, this means that $x^*\mult x =x^*x  $ is well defined and  $x^* x=0$.

(iii)$\Rightarrow$(i) Assume now that $x\ha x$ is well-defined and $x^*x=0$. Then, if $\pi$ is a $(\tau,{\sf t}_{s^*})$-continuous *-representation of $\A$, $\pi(x\ha)\mult\pi(x)=\pi(x)\ad\mult\pi(x)$ is well-defined and equals $0$ . Hence, for every $\xi \in \D(\pi)$,
\begin{align*} \|\pi(x)\xi\|^2 &= \ip{\pi(x)\xi}{\pi(x)\xi}= \ip{\pi(x)\xi}{\pi(x)\xi} \\
&=\ip{\pi(x)\ad\mult\pi(x) \xi}{\xi}= \ip{\pi(x\ha)\mult\pi(x) \xi}{\xi}\\ &=\ip{\pi(x\ha x)\xi}{\xi} =0.
\end{align*}
Hence $\pi(x) = 0$.

\end{proof}

Clearly if $\A$ possesses a sufficient family $\M$ of $\tau$-continuous ips-forms, then $\P_{\Ao}(\A)$ itself is sufficient. By Proposition \ref{prop_finalnew}, it follows that $\R^*(\A)=\{0\}$. Conversely, if $\R^*(\A)=\{0\}$, then $\P_{\Ao}(\A)$ is sufficient. The choice of considering a sufficient family $\M$ instead of the whole $\P_{\Ao}(\A)$ is motivated by the fact that characterizing the space $\P_{\Ao}(\A)$ in concrete examples is much more difficult than choosing a sufficient subfamily.

 As for the case of topological algebras, it is  natural, at the light of the previous discussion, to call {\em *-semisimple} an $\Ao$-regular partial *-algebra $\A[\tau]$ such that $\R\ha(\A)=\{0\}$. We hope to carry out a more detailed analysis of this situation in a further paper.

\appendix
\section{Proof of Proposition \ref{indextension}}
We give here a proof of Proposition \ref{indextension}. The argument used is very similar to that given in \cite[Proposition 1]{schm_wb} in a different context.
To keep the notation lighter, we will assume that $\pi(e)= I_\D$. The general case can be proved by a slight modification of the argument below. Note that all the considered sums are finite.

We have to check that $\pi_1(a)$ is well-defined  for every $a \in \A$ and that $\pi_1$ is a *-representation of $\A$.
\begin{align*}
\ip{\sum_i(\pi(a)\mult\pi(b_i))\xi_i}{\sum_j\pi(c_j)\eta_j}
&=\sum_{i,j}\ip {\pi(ab_i)\xi_i}{\pi(c_j)\eta_j}\\
&\hspace*{-3.5cm}=\sum_{i,j}\ip {\xi_i}{\pi(ab_i)\ad\pi(c_j)\eta_j}
\\&\hspace*{-3.5cm}
=\sum_{i,j}\ip {\xi_i}{\pi(b_i^*a^*)\mult\pi(c_j)\eta_j}
=\sum_{i,j}\ip {\xi_i}{\pi((b_i^*a^*)c_j)\eta_j}
\\&\hspace*{-3.5cm}
=\sum_{i,j}\ip {\xi_i}{(\pi(b_i^*)\mult\pi(a^*c_j))\eta_j}
= \sum_{i}\ip {\pi(b_i)\xi_i}{\sum_{j}(\pi(a^*c_j)\eta_j}.
\end{align*}
Hence, if $\sum_i \pi(b_i)\xi_i =0$, then $\xi:= \sum_i(\pi(a)\mult\pi(b_i))\xi_i$ is orthogonal to every element of $\D_1$, which is dense in $\H$.
Thus $\xi=0$.
This proves that $\pi_1(a)$ is, for every $a\in\A$, a well-defined linear map of $\dd_1$ into $\hh$. Clearly, $\pi_1(\Ao)\subset \LD$.\\
Moreover, the above equalities also imply that
$$\ip{\pi_1(a)\left(\sum_i\pi(b_i)\xi_i\right)}{\sum_j\pi(c_j)\eta_j}=\ip {\sum_i\pi(b_i)\xi_i}{\pi_1(a^*)\sum_j\pi(c_j)\eta_j}.$$
Hence, $\pi_1(a)\ad=\pi_1(a^*)$.\\
Let now $a_1,a_2\in\A$ with $a_1a_2$ well-defined. We have to prove that $\pi_1(a_1)\mult\pi_1(a_2)$ is well-defined and $\pi_1(a_1)\mult\pi_1(a_2)=\pi_1(a_1a_2)$:
\begin{align*}
&\ip  {\pi_1(a_1a_2)(\sum\pi(b_i)\xi_i)}{\sum\pi(c_j)\eta_j}=
\sum_{i,j}\ip {\pi(a_1a_2)\mult\pi(b_i)\xi_i}{\pi(c_j)\eta_j}.
\end{align*}
On the other hand:
\begin{align*}
\ip  {\pi_1(a_2)(\sum\pi(b_i)\xi_i)}{\pi_1(a_1)\ad\sum\pi(c_j)\eta_j}&=
\sum_{i,j}\ip {\pi(a_2)\mult\pi(b_i)\xi_i}{\pi(a_1^*)\mult\pi(c_j)\eta_j}
\\
&=\sum_{i,j}\ip {\pi(a_2b_i)\xi_i}{\pi(a_1^*c_j)\eta_j}.
\end{align*}
Now, checking that $c_j^*((a_1a_2)b_i)=(c_j^*(a_1a_2))b_i$, we have:
\begin{align*}
&\sum_{i,j}\ip {\pi(a_2b_i)\xi_i}{\pi(a_1^*c_j)\eta_j}=\sum_{i,j}\ip {\pi(c_j^*a_1)\mult\pi(a_2b_i)\xi_i}{\eta_j}\\
&=
\sum_{i,j}\ip {(\pi\ad(c_j)\mult\pi(a_1))\mult\pi(a_2b_i)\xi_i}{\eta_j}
=\sum_{i,j}\ip {\pi\ad(c_j)\mult(\pi(a_1)\mult\pi(a_2b_i))\xi_i}{\eta_j}\\
&=
\sum_{i,j}\ip {(\pi(a_1)\mult\pi(a_2b_i))\xi_i}{\pi(c_j)\eta_j}=\sum_{i,j}\ip {((\pi(a_1)\mult\pi(a_2))\mult\pi(b_i)\xi_i}{\pi(c_j)\eta_j}\\
&=
\ip {(\sum\pi(a_1a_2))\mult\pi(b_i)\xi_i}{\sum\pi(c_j)\eta_j}=\ip {(\pi_1(a_1a_2))\sum\pi(b_i)\xi_i}{\sum\pi(c_j)\eta_j}.
\end{align*}
This proves the statement.

\end{document}